\documentclass[11pt]{amsart}
\input{preamble.tex}
\usetikzlibrary{patterns,snakes}

\newcommand{\SMor}{\operatorname{SMor}}
\newcommand{\LS}{\EuScript{L}}
\renewcommand{\RS}{\EuScript{R}}

\newcommand{\PB}{\mathbf{P}}
\newcommand{\AB}{\mathbf{A}}
\newcommand{\EB}{\mathbf{E}}

\renewcommand{\SB}{\mathbf{S}}
\newcommand{\XS}{\EuScript{X}}
\newcommand{\XB}{\mathbf{X}}
\newcommand{\YB}{\mathbf{Y}}

\newcommand{\MS}{\EuScript{M}}

\newcommand{\Pretr}{\mathbf{Pretr}}
\renewcommand{\pretr}{\Pretr}
\newcommand{\bra}[1]{\boldsymbol{\langle}#1\boldsymbol{|}}
\newcommand{\ket}[1]{\boldsymbol{|}#1\boldsymbol{\rangle}}

\newcommand{\braket}[2]{\boldsymbol{\langle}#1\boldsymbol{|}#2\boldsymbol{\rangle}}
\newcommand{\braCket}[3]{\left \langle#1\right |#2\left|#3\right\rangle}
\newcommand{\ketbra}[2]{\ket{#1}\bra{#2}}
\newcommand{\Tot}{\operatorname{Tot}}

\newcommand{\Twix}{\mathbf{Twix}}

\newcommand{\dgcat}{\operatorname{dgcat}}
\newcommand{\DGCAT}{\operatorname{DGCAT}}
\newcommand{\Fun}{\operatorname{Fun}}

\begin{document}
\author{Matthew Hogancamp}
\address{Department of Mathematics, Northeastern University, 360 Huntington Ave, Boston,
MA 02115, USA}
\email{m.hogancamp@northeastern.edu}

\title{Envelopes and the bar complex}

\begin{abstract} 
This paper is intended as a reference for some basic theory for dg categories and their bar complexes.  Our modest goal is to carefully record the most important envelope operations can one perform on dg categories (in which one adjoins shifts, finite direct sums, or twists) and the inescapable sign rules that appear when combining these with opposite categories, tensor products, and the bar resolution.  An appendix collects some theory of categorical idempotents that is useful when discussing bar complexes.
\end{abstract}

\maketitle

\setcounter{tocdepth}{1}

\tableofcontents

\section{Introduction}
\label{s:intro}
The word \emph{envelope} (or sometimes \emph{hull}) is used to refer to an idempotent operation that can be performed on a category.  Common examples include the Karoubian envelope, which adjoins the images of all idempotent endomorphisms, the additive envelope  of a $\k$-linear category, which adjoins finite direct sums, and numerous others.  In the context of dg categories we may also adjoin degree shifts of objects, or more general twisted complexes.

Formally speaking, each envelope construction corresponds to an idempotent monad acting on the category of appropriately enriched ($\k$-linear, or dg) categories, with morphisms given by functors up to natural transformation.

\begin{remark}
Keeping track of natural transformations between functors, the collection of (appropriately enriched) categories forms an (appropriately enriched) 2-category, and an envelope operation ought to correspond to an idempotent 2-monad acting on this 2-category.  In the interest of keeping the discussion elementary, we will not focus on the 2-monad structure in this paper, though it is discussed briefly in Remark \ref{rmk:idempotent monad}.
\end{remark}

The main goal of this paper is to describe the most common envelope operations that can be performed on a dg category $\CS$, and to record precisely how these envelopes interact with viarous other well known constructions, such as opposite categories, the tensor product of dg categories, and the bar construction.  We will focus our attention on the following envelope operations which can be performed on any dg category $\CS$:
\begin{enumerate}
\item the suspended envelope $\SB(\CS)$, in which we formally adjoin shifts (\S \ref{ss:shifts}),
\item the additive envelope $\AB(\CS)$, in which we formally adjoin finite direct sums (\S \ref{ss:finite sums}),
\item the twisted envelope $\Tw(\CS)$, in which we formally adjoin twists of objects (\S \ref{ss:twists})
\item the pretriangulated envelope $\pretr(\CS)$, in which we formally adjoin all mapping cones (\S \ref{ss:twisted complexes}).
\end{enumerate}
Combining the first three envelopes above yields the category $\Twix(\CS):=\Tw\circ \AB\circ \SB(\CS)$ consisting of \emph{finite twisted complexes over $\CS$}.  Then $\pretr(\AS)$ is a full subcategory of $\Twix(\CS)$ consisting of \emph{one-sided twisted complexes} (\S \ref{ss:twisted complexes}).

\begin{remark}
The category of twisted complexes was introduced by Bondal and Kapranov \cite{BonKap90}, where it was called the pretriangulated envelope of $\CS$.  However, the category of arbitrary twisted complexes is not a well-behaved invariant of $\CS$.  For instance one can find an example of a dg category $\CS$ which is quasi-equivalent to zero, but for which $\Tw(\CS)$ is not quasi-equivalent to zero (see Example \ref{ex:Tw is not invt}).

In contrast, the category of finite one-sided twisted complexes is always derived Morita equivalent to $\CS$.  For this reason, the modern tradition is to reserve the term pretriangulated envelope for the category of one-sided twisted complexes, as we have done.  For the standard references on homotopy theory and Morita theory of dg categories, see \cite{KellerDeriving,KellerICM,ToenDg,ToenMorita,DrinfeldQuotient}.
\end{remark}

\begin{remark}
If all the hom complexes in $\CS$ are supported in non-positive cohomological degrees, then every twisted complex is one-sided, so $\Twix(\CS)$ and $\pretr(\CS)$ coincide (see Remark \ref{rmk:negatively graded}).
\end{remark}

\begin{remark}
If $\AS$ is an ordinary 
additive category, then we may regard $\AS$ as a dg category with trivial differential and grading, and $\pretr(\AS)=\Twix(\AS)=\Ch^b(\AS)$ is the usual dg category of complexes over $\AS$.
\end{remark}

When working with twisted complexes, there are some unavoidable sign rules that enter into many constructions, and one of the main purposes of this paper is to provide a reference for some of the most common of these.  To describe, let $\EB$ be one of the envelope operations considered above.  
\begin{itemize}
\item  in \S \ref{ss:opp} we discuss an isomorphism $\EB(\CS^{\op})\cong \EB(\CS)^{\op}$, which is responsible for extending a contravariant functor from $\CS$ to $\DS$ a contravariant functor from $\EB(\CS)$ to $\EB(\DS)$.
\item in \S \ref{ss:tensor} we discuss a fully faithful dg functor from $\EB(\CS_1)\otimes \EB(\CS_2)\to \EB(\CS_1\otimes \CS_2)$, which is responsible for extending a multilinear functor $\CS_1\times \cdots \times \CS_r\to \DS$ to a multilinear functor $\EB(\CS_1)\times \cdots \times \EB(\CS_r)\to \EB(\DS)$.
\end{itemize}

These types of constructions are essential when considering, for instance, monoidal structures and duals for categories of the form $\pretr(\CS)$.

In \S \ref{s:bimodules} we give a precise relationship between the bar complex of $\CS$ and its envelopes. This firmly establishes a fact stated imprecisely and without proof in \cite[\S 5.3]{GHW}.  The bar complex of a dg category gives a projective resolution of the identity bimodule, and our main result in \S \ref{s:morita} is that $\CS$ is derived Morita equivalent to its envelopes $\EB(\CS)$, for $\EB\in \{\SB,\AS,\pretr\}$.

\begin{remark}
This paper offers something of a different perspective than the standard references on dg categories, in a few ways.  First, we do not discuss model category structures at all, since this is already quite thoroughly studied.  Thanks to To\"en \cite{ToenMorita}, we know that weak equivalence of dg categories (with respect to the appropriate model structure) agrees with derived Morita equivalence, which can be approached with a combination of usual Morita theory and some bar complex trickery.

Secondly, we prioritize formulas.  One can show by general nonsense that $\CS$ is derived Morita equivalent to $\pretr(\CS)$, but we will achieve this result by an explicit chain map relating the bar complexes of $\CS$ and $\pretr(\CS)$.  This same formula was stated without careful bookkeeping and signs in \cite[\S 5.3]{GHW}.  Perhaps the main theme of this paper is: we have recorded (and, hopefully, motivated) as many of the irritating sign rules that we could think of, so future work does not have to.  We have tried to keep the prerequisites as minimal and the paper as self-contained as possible.  We do rely on some theory of counital idempotents \cite{HogancampIdempotent,HogancampConstructing} in order to show that the chain map we construct is a homotopy equivalence.  Since these papers do not directly apply to our dg setting, we have decided to include the relevant theory in Appendix \ref{s:counital idempotents} for the reader's convenience.
\end{remark}

\section{DG categories}
\label{s:setup}

\subsection{Graded $\k$-modules}
\label{ss:graded modules}
Throughout the paper, fix a commutative ring $\k$.  All graded $\k$-modules will be graded by an abelian group $\Gamma$.  We will also assume given a symmetric bilinear form $\ip{\ , \ }\colon \Gamma\rightarrow \Z/2$, and an element $\iota\in \Gamma$ with $\ip{\iota,\iota}=1$, which is to be the degree of all differentials.   For concreteness, the reader can take $\Gamma=\Z$, $\ip{i,j}=ij$ (mod 2), and $\iota=1$.

A $\Gamma$-graded $\k$-module will be written as $M=\{M^i\}_{i\in \Gamma}$ with $M^i\in \Mod{\k}$ being referred to as the component of $M$ in degree $i$. If $m\in M^i$ then we say $m$ is homogeneous of degree $i$ and write $|m|=i$ or $\deg(m) =i$.  We may also say that $m$ has \emph{weight} $\wt(m) = q^i$, with $q$ a formal variable.   We write $m\in M$ if $m\in M^i$ for some $i\in \Gamma$; that is to say, we only ever speak of \emph{homogeneous elements} of graded modules.

The role of $\ip{\ , \ }$ is that it defines the symmetric monoidal structure on $\Gamma$-graded $\k$-modules.  Namely, we have the \emph{tensor product} $M\otimes N$ defined by
\[
(M\otimes N)^k :=\bigoplus_{i+j=k} M^i\otimes N^j,
\]
with \emph{symmetric structure}
$
\tau_{M,N}\colon M\otimes N\xrightarrow{\cong} N\otimes M
$
defined componentwise by
\[
(\tau_{M,N})|_{M^i\otimes N^j} \colon M^i\otimes N^j \xrightarrow{(-1)^{\ip{i,j}}} N^j\otimes M^i.
\]
We also have the $\Gamma$-graded $\k$-module of homs (the \emph{internal hom}), defined by
\begin{equation}\label{eq:graded hom}
\Hom_\k(M,N)=\{\Hom_\k^l(M,N)\}_{l\in \Gamma} \ ,\qquad  \Hom_\k^l(M,N) := \prod_{i} \Hom_\k(M^i,N^{i+l}).
\end{equation}
An element $f\in \Hom_\k^l(M,N)$ will be referred to as a $\k$-linear map of degree $l\in \Gamma$.

\begin{definition}
Let $\gMod{\Gamma}{\k}$ denote the category of $\Gamma$-graded $\k$-modules and degree zero $\k$-linear maps. A \emph{$\Gamma$-graded $\k$-linear category} will mean a category enriched in $\gMod{\Gamma}{\k}$.  The internal hom \eqref{eq:graded hom} gives $\gMod{\Gamma}{\k}$ the structure of a $\Gamma$-graded category, denoted $(\gMod{\Gamma}{\k})^{\mathrm{en}}$.
\end{definition}

Given $f\in \Hom^k_\k(M,M')$ and $g\in \Hom^l_\k(N,N')$ we define $f\otimes g\in \Hom^{k+l}_{\k}(M\otimes M',N\otimes N')$ by the formula (the Koszul sign rule)
\begin{equation}\label{eq:koszul tensor}
(f\otimes g)(m\otimes n) = (-1)^{\ip{|g|,|m|}} f(m)\otimes g(n).
\end{equation}
This tensor product extends the (symmetric) monoidal structure from $\gMod{\Gamma}{\k}$ to $(\gMod{\Gamma}{\k})^{\mathrm{en}}$.  One can easily check that the tensor product of morphisms of nonzero degree satisfies
\begin{equation}\label{eq:interchange law}
(f\otimes g)\circ (f'\otimes g') = (-1)^{\ip{|f|,|g'|}}(f\circ f')\otimes (g\circ g').
\end{equation}

\subsection{Grading shift functors}
\label{ss:k shifts}
Retain the setup from \S \ref{ss:graded modules}.
For each $j\in \Gamma$ we define the grading shift functor $q^j(-)\colon \gMod{\Gamma}{\k}\to \gMod{\Gamma}{\k}$ by
\begin{equation}\label{eq:shift of M}
(q^jM)^i = M^{i-j}  \qquad q^jf= (-1)^{\ip{j,|f|}} f.
\end{equation}
We can think of the grading shift functor as implemented by tensoring with $q^j\k$ \emph{on the left}:
\[
q^jM \buildrel\cong\over\rightarrow q^j\k \otimes M \, , \qquad m\mapsto 1\otimes m.
\]
We can also define a grading shift functor $(-)q^j\colon \gMod{\Gamma}{\k}\to \gMod{\Gamma}{\k}$ which is modeled on tensoring with $q^j\k$ \emph{on the right}; this acts on morphisms with no sign: 
\begin{align*}
(Mq^j)^i = M^{i-j}  \qquad fq^j= f\\
Mq^j \buildrel\cong\over\rightarrow M\otimes q^j \, ,\qquad m\mapsto m\otimes 1
\end{align*}
These are isomorphic using the symmetric structure:
\[
q^jM\cong Mq^j \ , \qquad m\mapsto (-1)^{\ip{j,|m|}} m.
\]

\subsection{Turning on differentials}
\label{ss:differentials}
Retain the setup from \S \ref{ss:graded modules}.  Recall that we have a chosen element $\iota\in \Gamma$ such that $\ip{\iota,\iota}= 1$.  A \emph{differential $\Gamma$-graded $\k$-module} (or \emph{dg module} for short) is a pair $(M,\d_M)$ where $M\in \gMod{\Gamma}{\k}$ and $\d_M\in \End^\iota_\k(M)$ satisfies $\d_M\circ \d_M=0$.  Let $\dgMod{\Gamma}{\k}$ denote the category of dg modules over $\k$ (graded by $\Gamma$), with morphisms given by degree zero chain maps.

The tensor product of dg modules is defined by
\[
(M,\d_M)\otimes (N,\d_N):= (M\otimes N, \d_{M\otimes N}:=\d_M\otimes \id_N + \id_M\otimes \d_N),
\]
where we are using \eqref{eq:koszul tensor} to define the tensor product of morphisms.  This is a differential because (using \eqref{eq:interchange law} to compose tensor products of morphisms) we have
\[
(\d_{M\otimes N})^2 = 0 + \d_M\otimes \d_N + (-1)^{\ip{\iota,\iota}}\d_M\otimes \d_N + 0,
\]
which is zero because $\ip{\iota,\iota}=1$.  The internal hom of dg modules is defined by
\[
\Hom_\k\Big((M,\d_M),(N,\d_N)\Big) = \Big(\Hom_\k(M,N) \: , \:  \d_{\Hom(M,N)}(f):=\d_N\circ f - (-1)^{\ip{\iota,|f|}} f\circ \d_M\Big).
\]
We will frequently abbreviate, writing $\d_{\Hom(M,N)}(f)$ simply as $d_\k(f)$.
The grading shift extends to dg modules by declaring
\[
q^j (M,\d_M) = (q^jM , q^j \d_M)
\]
Note that this will multiply the differential by a sign $(-1)^{\ip{j,\iota}}$, according to \eqref{eq:shift of M}. 

\begin{example}\label{ex:classical setup}
In the classical setup of dg categories we have $\Gamma=\Z$ with pairing $\ip{i,j} = ij$ (mod 2), and $\iota=1$ (or $\iota=-1$ if one prefers).  The grading shift $q^i M$ is conventionally denoted $M[-i]$.
\end{example}

\subsection{Dg category basics}
\label{ss:dg basics}

\begin{definition}\label{def:dg cat}
A \emph{differential $\Gamma$-graded category} (or \emph{dg category} for short) will mean a category enriched in $\dgMod{\Gamma}{\k}$. 
\end{definition}
What this means is that the hom spaces in $\CS$ are dg $\k$-modules, and composition defines degree zero chain maps
\begin{equation}\label{eq:composition}
\Hom_\CS(Y,X)\otimes \Hom_\CS(Z,Y)\to \Hom_\CS(Z,X),
\end{equation}
with tensor product taken in $\dgMod{\Gamma}{\k}$. 
\begin{remark}
Any $\k$-linear category may be regarded a dg category with trivial grading and differential.
\end{remark}

We will often adopt the bra-ket notation from physics literature to denote the hom complexes in $\CS$. If $\CS$ is a dg category then we will denote hom complexes in $\CS$ in a ``right-to-left'' fashion by
\begin{equation}\label{eq:XCY}
\braCket{X}{\CS}{Y}:=\Hom_\CS(Y,X)
\end{equation}
When the category $\CS$ is understood, it may be omitted from the notation as in $\braket{X}{Y}=\braCket{X}{\CS}{Y}$.  The bra-ket notation will ultimately play a major role in our discussion of bimodules \S \ref{s:bimodules}, but also the freedom to sometimes write hom complexes in a right-to-left fashion elucidates certain formulas.  Most obviously, composition of morphisms defines a chain map
\[
\begin{tikzpicture}
\node (a) at (0,0) {$\braCket{X}{\CS}{Y} \otimes \braCket{Y}{\CS}{Z}$};
\node (b) at (4,0) {$\braCket{X}{\CS}{Z}$};
\node (c) at (0,-.5) {$f\otimes g$};
\node (d) at (4,-.5) {$f\circ g$};
\path[-stealth]
(a) edge node {} (b);
\path[|-stealth]
(c) edge node {} (d);
\end{tikzpicture}
\]

The differential in the hom complexes of $\CS$ will be denoted $d_\CS$ or $d$. A morphism $f\in\Hom_\CS(X,Y)$ in a dg category is \emph{closed} if $d_\CS(f)=0$, and \emph{exact} if $f=d_\CS(h)$ for some $h\in \Hom_\CS(X,Y)$.  Objects $X,Y\in \CS$ are \emph{isomorphic}, written $X\cong Y$, if there is a degree zero closed invertible morphism $f\colon X\rightarrow Y$ in $\CS$ (if $f$ is not closed, then instead we say that $Y$ is a twist of $X$; see below).

We say that two closed morphisms $f,g\in \Hom_\CS(X,Y)$ are \emph{homotopic}, written $f\simeq g$, if $|f|=|g|$ and $f-g$ is exact.  We write $Z^0(\CS)$ for the category with the same objects as $\CS$, and morphisms the degree zero closed morphisms.  We write $H^0(\CS)$ for the \emph{cohomology category}, i.e.~ the category with the same objects as $\CS$ and morphisms the degree zero closed morphisms modulo exact morphisms.  We say that objects $X,Y$ are homotopy equivalent, also written $X\simeq Y$, if $X$ and $Y$ are isomorphic in $H^0(\CS)$.  An object $X$ is \emph{contractible} if $X\simeq 0$.  If $X\simeq 0$ then a \emph{contracting homotopy} for $X$ is an element $h\in \End^{-\iota}(X)$ such that $d_\CS(h)=\id_X$.

A \emph{dg functor} $F\colon \CS\rightarrow \DS$ is a mapping on objects $F\colon \Obj(\CS)\rightarrow \Obj(\DS)$ and on morphisms $F\colon \Hom_\CS(X,X')\rightarrow \Hom_\DS(F(X),F(X'))$ such that $F(\id_X)=\id_{F(X)}$ and $F(f\circ f')=F(f)\circ F(f')$ for all $X$ and all composable morphisms $f,f'$. Given dg functors $F,G\colon \CS\rightarrow \DS$, the \emph{dg $\k$-module of natural transformations}, denoted $\Hom_\k(F,G)$ is defined as follows.  An element of $\Hom_\k^l(F,G)$ is a family of morphisms $(\a_X\in \Hom_\DS^l(F(X),G(X)))_{X\in \Obj(\CS)}$ such that
\[
G(f)\circ \a_X = (-1)^{\ip{l,|f|}}\a_{X'}\circ F(f) \text{ for all } f\in \Hom_\CS(X,X').
\]
The differential of $\a = (\a_X)_X$ is defined to be $(d_{\DS}(\a_X))_X$.

Now, let $\CS$ and $\DS$ be dg categories, and let $F\colon \CS\to \DS$ be a dg functor.  We say $F$ is \emph{quasi-essentially surjective} if each $Y\in\Obj(\DS)$ is homotopy equivalent to $F(X)$ for some $X\in \Obj(\CS)$, and \emph{quasi-fully faithful} if each mapping on hom complexes $\Hom_\CS(X,X')\to \Hom_\DS(F(X),F(X'))$ is an isomorphism in homology.  We say $F$ is a \emph{quasi-equivalence} if it is quasi-essentially surjective and quasi-fully faithful.

\section{The envelopes}
\label{s:closures}

%

\subsection{Adjoining shifts}
\label{ss:shifts}
Let $\CS$ be a dg category, and let $X\in \CS$ and $i\in \Gamma$ be given.  We say that $Y\in \CS$ \emph{realizes the $i$-shift of $X$}, written $Y\cong q^iX$, if there is a closed, invertible, degree $i$ map $\phi\colon X\to Y$.  We say that $\CS$ is \emph{suspended} if for every $X\in \Obj(\CS)$ and every $i\in \Gamma$, there exists an object realizing $q^iX$. Note that the pair $(Y,\phi)$ realizing $q^i X$ is unique when it exists, since if $(Y',\phi')$ is another pair realizing $q^iX$, then $\phi'\circ \phi\inv\colon Y\to Y'$ is an isomorphism. 

\begin{remark}
If $Y$ realizes $q^i X$ in $\CS$, then for any dg functor $F\colon \CS\to \DS$, the shift $q^i F(X)$ is realized by $F(Y)$ in $\DS$, with structure map $F(\phi)\colon F(X)\to F(Y)$.  In other words, every dg functor commutes with shifts:
\[
F(q^iX)\cong q^iF(X).
\]
\end{remark}

Every dg category embeds fully faithfully in a suspended dg category.

\begin{definition}\label{def:suspended envelope}
Let $\SB(\CS)$, the \emph{suspended envelope of $\CS$}, denote the category with objects formal expressions $q^iX$ with $i\in \Gamma$ and $X$, and hom complexes
\begin{equation}\label{eq:shifted homs}
\begin{tikzpicture}[baseline=-.1cm]
\node at (0,0) {$\Hom_{\SB(\CS)}^l(q^iX,q^jY)$};
\node at (1.8,0) {$:=$};
\node at (3.4,0) {$\Hom_\CS^{l+i-j}(X,Y)$};
\node at (1,-.75) {$f^j_i$};
\node at (1.8,-.75) {$\substack{\longmapsto \\ \longmapsfrom}$};
\node at (2.5, -.75) {$f$};
\end{tikzpicture}
\ , \qquad \quad d_{\SB(\CS)}(f^j_i) := (-1)^{\ip{\iota,j}} d_\CS(f)^j_i
\end{equation}
Here we are adopting the convention that for a given morphism $f\in \Hom_\CS^k(X,Y)$ and a pair of elements $i,j\in \Gamma$, the corresponding element of $\Hom_{\SB(\CS)}^{k+j-i}(q^iX,q^jY)$ shall be denoted by $f^j_i$. 

For each $X$, denote the object $q^0X\in \SB(\CS)$ simply by $X$ and the morphism $f^0_0$ simply by $f$. Given $i\in \Gamma$, let $\phi_{X,i}=(\id_X)^i_0\in \Hom^i_{\SB(\CS)}(X,q^iX)$.
\end{definition}

\begin{remark}
For each $X\in \CS$ and each $i\in \Gamma$, the map $\phi_{X,i}=(\id_X)^i_0$ is closed and invertible, with inverse $\phi_{X,i}\inv = (\id_X)^0_{i}$, hence the formal expression $q^i X$ does indeed realize the shift of $X$ in $\SB(\CS)$.
\end{remark}

\begin{remark}
Using the canonical structure maps $\phi_{X,i}$, the identification $\Hom_\CS^k(X,Y) \buildrel =\over\rightarrow \Hom_{\SB(\CS)}^{k+j-i}(q^iX,q^jY)$ from \eqref{eq:shifted homs} is implemented by $f\mapsto \phi_{Y,j}\circ f\circ \phi_{X,i}\inv$, i.e.
\begin{equation}\label{eq:fij}
f^{j}_{i} = \phi_{Y,j}\circ f\circ \phi_{X,i}\inv.
\end{equation}
The sign rule on the differential $d_{\SB(\CS)}(f^j_i)$ from \eqref{eq:shifted homs} is exactly what is required to ensure compatibility of \eqref{eq:fij} with the Leibniz rule.
\end{remark}

\begin{remark}
In terms of the ``bra-ket'' notation \eqref{eq:XCY}, equation \eqref{eq:shifted homs} can also be written
\[
\braCket{q^jY}{\SB(\CS)}{q^iX} = q^j \braCket{Y}{\CS}{X} q^{-i}.
\]
(note that the signs on the differentials agree).
\end{remark}

\begin{remark}\label{rmk:SB on functors} 
Any dg functor $F\colon \CS\to \DS$ induces a dg functor on suspended envelopes $\SB(F)\colon \SB(\CS)\to \SB(\DS)$ by the formulas
\[
\SB(F)(q^iX) := q^i F(X) \, , \qquad \SB(F)(f^j_i) := F(f)^j_i.
\]
In fact $\SB(F)$ is the unique extension of $F$ such that $\SB(F)(\phi_{X,i})=\phi_{F(X),i}$.
\end{remark}

\begin{proposition}\label{prop:suspended envelope}
We have the following:
\begin{enumerate}
\item  there is a fully faithful dg functor $\eta_\CS\colon \CS\to \SB(\CS)$ sending $X\mapsto q^0X$ and $f\mapsto f^0_0$.
\item there is an equivalence of dg categories $\mu_\CS\colon \SB(\SB(\CS))\to \SB(\CS)$ sending $q^i(q^{i'}X))\mapsto q^{i+i'}X$ and $(f^{i'}_{j'})^i_j\mapsto f^{i+i'}_{j+j'}$.
\item the pair $(\mu_\CS,\eta_\CS)$ satisfies appropriate associativity and unit axioms up to isomorphism.
\item $\CS$ is suspended iff $\eta_\CS\colon \CS\to \SB(\CS)$.
\end{enumerate} 
\end{proposition}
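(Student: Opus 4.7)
The plan is to verify each of the four parts by direct calculation from the formulas defining $\SB(\CS)$, with the main work being a careful tracking of the Koszul signs coming from \eqref{eq:shifted homs}.

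For (1), I would first check that the assignment $f \mapsto f^0_0$ is compatible with differentials: by \eqref{eq:shifted homs}, $d_{\SB(\CS)}(f^0_0) = (-1)^{\ip{\iota,0}} d_\CS(f)^0_0 = d_\CS(f)^0_0$, so $\eta_\CS$ is a chain map on each hom complex. Compatibility with composition follows from \eqref{eq:fij}, since $f^0_0 \circ g^0_0 = \phi_{Y,0} \circ f \circ \phi_{X,0}^{-1} \circ \phi_{Y,0} \circ g \circ \phi_{Z,0}^{-1} = f \circ g = (f\circ g)^0_0$ once one observes $\phi_{X,0} = \id_X$. Fully faithfulness is tautological, since \eqref{eq:shifted homs} with $i=j=0$ gives $\Hom^l_{\SB(\CS)}(q^0X,q^0Y) = \Hom^l_\CS(X,Y)$ on the nose.

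For (2), the formula $(f^{i'}_{j'})^i_j \mapsto f^{i+i'}_{j+j'}$ defines a bijection between hom complexes of matching degree: both sides equal $\Hom^{l+i+i'-j-j'}_\CS(X,Y)$ by iterating \eqref{eq:shifted homs}. To see that $\mu_\CS$ is a dg functor, I would compute the differential on the outer and inner suspended envelopes and check the signs agree; by \eqref{eq:shifted homs} applied twice, $d_{\SB(\SB(\CS))}((f^{i'}_{j'})^i_j) = (-1)^{\ip{\iota,j}+\ip{\iota,j'}} d_\CS(f)^{\cdots}$, which matches $d_{\SB(\CS)}(f^{i+i'}_{j+j'}) = (-1)^{\ip{\iota,j+j'}}d_\CS(f)^{\cdots}$ since $\ip{\cdot,\cdot}$ is bilinear. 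Composition is handled similarly, using \eqref{eq:fij} iteratively. Essential surjectivity is immediate since $q^iX = \mu_\CS(q^0(q^i X))$, and fully faithfulness follows from the hom-space bijection, so $\mu_\CS$ is an isomorphism of dg categories (hence an equivalence).

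For (3), the associativity square involving $\mu_\CS \circ \SB(\mu_\CS)$ versus $\mu_\CS \circ \mu_{\SB(\CS)}$ on $\SB^3(\CS)$ and the unit triangles $\mu_\CS \circ \eta_{\SB(\CS)} = \id_{\SB(\CS)} = \mu_\CS \circ \SB(\eta_\CS)$ can be checked by unwinding formulas: both paths send $q^{i_1}(q^{i_2}(q^{i_3}X))$ to $q^{i_1+i_2+i_3}X$ on objects with matching sign conventions on morphisms, and the unit maps simply insert a $q^0$ in one of the slots which adds nothing to the total index. In fact these hold on the nose rather than merely up to isomorphism, so the ``up to isomorphism'' qualifier is generous. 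The main care here is ensuring the sign on iterated shifts in \eqref{eq:shifted homs} is genuinely additive in the superscripts/subscripts, which comes down to the bilinearity of $\ip{-,-}$.

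For (4), the claim (which I read as: $\CS$ is suspended iff $\eta_\CS$ is an equivalence of dg categories) splits into two implications. Since $\eta_\CS$ is always fully faithful by (1), what remains is to characterize when it is (quasi-)essentially surjective. If $\CS$ is suspended, then for every $q^iX \in \SB(\CS)$ there exists $Y \in \CS$ with a closed invertible degree-$i$ map $\phi\colon X \to Y$; the morphism $\phi^i_0 \circ \phi_{X,i}^{-1} = \phi_{Y,0} \circ \phi \circ \phi_{X,0}^{-1} \circ \phi_{X,i}^{-1}$ provides a closed degree-zero isomorphism between $\eta_\CS(Y) = q^0Y$ and $q^i X$, giving essential surjectivity. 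Conversely, if $\eta_\CS$ is essentially surjective, then for any $X \in \CS$ and $i \in \Gamma$ there exists $Y \in \CS$ and a closed invertible degree-zero map $q^0Y \to q^i X$ in $\SB(\CS)$; translating through \eqref{eq:shifted homs} produces a closed invertible degree-$i$ map $X \to Y$ in $\CS$, so $Y$ realizes $q^i X$. The only real obstacle across the whole proof is bookkeeping of signs in part (2), but once the calculation for (1) is set up the rest propagates with essentially the same identity $\ip{\iota,j+j'} = \ip{\iota,j}+\ip{\iota,j'}$ in $\Z/2$.
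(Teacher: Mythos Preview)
Your approach is exactly the direct verification the paper intends (its own proof reads ``Straightforward exercise''), and the overall structure is sound. Two small bookkeeping slips are worth fixing. In part~(2), the sign in \eqref{eq:shifted homs} depends on the \emph{superscript} (the target shift), so the double application gives $(-1)^{\ip{\iota,i}+\ip{\iota,i'}}$ rather than $(-1)^{\ip{\iota,j}+\ip{\iota,j'}}$, which then matches $(-1)^{\ip{\iota,i+i'}}$ on the target side; also, $\mu_\CS$ is an equivalence but not literally an isomorphism of categories, since for instance $q^1(q^0X)$ and $q^0(q^1X)$ are distinct objects with the same image. In part~(4), the isomorphism you want between $q^iX$ and $q^0Y$ is simply $\phi^0_i\in\Hom^0_{\SB(\CS)}(q^iX,q^0Y)$ (your formula has the super- and subscripts swapped). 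None of these affects the substance of the argument.
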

\begin{proof}
Straightforward exercise.
\end{proof}



\begin{remark}\label{rmk:idempotent monad}
Remark \ref{rmk:SB on functors} says that $\SB$ may be regarded as a functor $\SB\colon \dgcat \to \dgcat$, where $\dgcat$ is the category of dg categories and isomorphism classes of functors between them.  Property (3) from Proposition \ref{prop:suspended envelope} say that in fact $\SB$ has the structure of a \emph{monad} acting on $\dgcat$.   Further, this monad is idempotent by (2), since $\mu_\CS$ is an isomorphism in $\dgcat$.

One can promote all these statements one ``category-level'' higher.  That is to say, we may endow $\SB$ with the structure of an idempotent 2-monad acting on the 2-category $\DGCAT$ of dg categories, dg functors, and dg natural tranformations. We won't discuss this more in the present paper.
\end{remark}

\begin{remark}
If $\CS$ is suspended, then choosing an inverse equivalence to $\eta_\CS\colon \CS\to \SB(\CS)$ gives $\SB(\CS)\to \CS$, which we think of as a functorial realization of all shifts of objects in $X$.

One can see that $\SB(\CS)$ is suspended, with a functorial realization of shifts provided by $\mu_\CS\colon \SB(\SB(\CS))\to \SB(\CS)$.  In other words, given an object $q^{i'}X\in \SB(\CS)$, its shift $q^i(q^{i'}X)$ is realized in $\SB(\CS)$ by $q^{i+i'}X$.
\end{remark}

We conclude this section with a description of how shifting by $i\in \Gamma$ looks as a dg functor.

\begin{lemma}[Shift functors]
\label{lemma:shift}
For each $k\in \Gamma$ there is a well-defined dg functor $q^k\colon \SB(\CS)\rightarrow \SB(\CS)$ defined on objects by $q^iX\mapsto q^{i+k}X$, and on morphisms $f^j_i\in \Hom_{\SB(\CS)}(q^iX,q^jY)$ by
\[
q^k f^j_i:= (-1)^{\ip{k, |f|+j-i}} f^{j+k}_{i+k}.
\]
\end{lemma}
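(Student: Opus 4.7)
The plan is to verify the three conditions that make $q^k$ a dg functor: preservation of identities, compatibility with composition, and (anti)commutation with the differential. All three reduce to elementary sign manipulations using the symmetry of the bilinear form $\langle\ ,\ \rangle$ and the relation $\langle\iota,\iota\rangle=1$, so the main obstacle is purely bookkeeping.

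First, for identities, I would observe that $\id_{q^iX}=(\id_X)^i_i$ has $|\id_X|=0$ and $i-i=0$, so the sign $(-1)^{\langle k,0\rangle}=1$, giving $q^k(\id_{q^iX})=(\id_X)^{i+k}_{i+k}=\id_{q^{i+k}X}$.

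Next, for composition, recall that in $\SB(\CS)$ the identification \eqref{eq:fij} implies $g^m_j\circ f^j_i=(g\circ f)^m_i$ whenever these are composable. Applying $q^k$ to the two sides independently produces, respectively,
\[
q^k(g^m_j)\circ q^k(f^j_i)=(-1)^{\langle k,|g|+m-j\rangle+\langle k,|f|+j-i\rangle}(g\circ f)^{m+k}_{i+k}
\]
and
\[
q^k\bigl((g\circ f)^m_i\bigr)=(-1)^{\langle k,|g|+|f|+m-i\rangle}(g\circ f)^{m+k}_{i+k},
\]
and one checks that the two exponents agree by bilinearity.

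Finally, the key step is checking compatibility with the differential. Using the formula $d_{\SB(\CS)}(f^j_i)=(-1)^{\langle\iota,j\rangle}d_\CS(f)^j_i$ from \eqref{eq:shifted homs} together with $|d_\CS(f)|=|f|+\iota$, I would compute both $q^k d_{\SB(\CS)}(f^j_i)$ and $d_{\SB(\CS)}(q^k f^j_i)$ as scalar multiples of $d_\CS(f)^{j+k}_{i+k}$, and then compare the exponents. The difference between them reduces to $\langle k,\iota\rangle-\langle\iota,k\rangle$, which vanishes by symmetry of the pairing. (Note that $\langle\iota,\iota\rangle=1$ is not needed for this particular check, only for the fact that $d_{\SB(\CS)}$ squares to zero.) This is the step where the signs most easily conspire to go wrong, so it is the place where I would take the most care.

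Having verified identity, composition, and differential compatibility, $q^k$ is a well-defined dg endofunctor of $\SB(\CS)$, completing the proof.
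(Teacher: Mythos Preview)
Your proposal is correct and follows exactly the approach implicit in the paper, which simply records the proof as ``Exercise.'' Your verification of identities, composition, and differential compatibility is the natural (and essentially only) route, and your sign computations are accurate.
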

\begin{proof}
Exercise.
\end{proof}

%
%
%


\subsection{Adjoining finite direct sums}
\label{ss:finite sums}
Let $A$ be a finite set, and suppose we have an $A$-indexed family of objects $X_a\in \CS$.  We say that $Y\in \CS$ \emph{realizes the direct sum of the $X_a$}, written $Y\cong \bigoplus_{a\in A} X_a$, if $Y$ comes equipped with $A$-indexed families of closed, degree zero maps $\sigma_a\in \Hom_\CS(X_a,Y)$ and $\pi_a\in \Hom_\CS(Y,X_a)$ such that
\[
\pi_a\circ \sigma_b  = \begin{cases} \id_{X_a} & \text{ if $a=b$} \\ 0 &\text{ otherwise}\end{cases} \, \qquad \text{and} \qquad
\sum_{a\in A} \sigma_a\circ \pi_a = \id_Y.
\]
If $\{(Y,\{\sigma_a\}_{a}, \{\pi_a\}_{a}\}$ and $\{(Y',\{\sigma_a'\}_{a}, \{\pi_a'\}_{a}\}$ are two realizations of $\bigoplus_{a}X_a$ then $\phi=\sum_{a} \sigma_a'\circ \pi_a$ is an isomorphism (with two-sided inverse $\phi\inv = \sum_a \sigma_a\circ \pi_a'$).  Thus, a realization of $\bigoplus_{a\in A}X_a$ is unique if it exists.

\begin{remark}
Note that finite direct sums in $\CS$ are the same as finite direct sums, or bi-products, in $Z^0(\CS)$.
\end{remark}

\begin{remark}
Dg functors preserve finite direct sums, because they preserve the equations which charactrize them.
\end{remark}
We say that $\CS$ is \emph{additive} if every finite direct sum is realized in $\CS$.  Every dg category embeds fully faithfully into an additive dg category.

\begin{definition}\label{def:additive envelope}
Let $\AB(\CS)$, the \emph{additive envelope of $\CS$}, be the category with objects formal expressions $\bigoplus_{a\in A}X_a$ with $A$ a finite set and $X_a\in \CS$, and morphisms
\[
\Hom_{\AB(\CS)}\left(\YB,\XB\right) :=\bigoplus_{(a,b)\in A\times B} \Hom_\CS(Y_b,X_a)
\]
or, using the right-to-left ``bra-ket'' notation from \eqref{eq:XCY}
\[
\braCket{\XB}{\AB(\CS)}{\YB} = \bigoplus_{(a,b)\in A\times B} \braCket{X_a}{\CS}{Y_b}.
\]
where $\XB=\bigoplus_{a\in A} X_a$ and $\YB=\bigoplus_{b\in B} Y_b$.
An element of the hom complex above will be written as an $A\times B$ matrix $(f_{a,b})$ with $f_{a,b}\in \Hom_\CS(Y_b,X_a)$.  The ordering of indices is chosen so that composition of morphisms is compatible with matrix multiplication:
\[
(f\circ g)_{a,c} = \sum_{b\in B} f_{a,b}\circ g_{b,c}
\]
If $A=\{a\}$ is a singleton and $X_a=X$, then we will write the object $\bigoplus_{a\in \{a\}} X$ simply as $X$. 
\end{definition}

\begin{remark}Any dg functor $F\colon \CS\to \DS$ induces a dg functor on additive envelopes $\AB(F)\colon \AB(\CS)\to \AB(\DS)$ as follows.  Given objects $\XB=\bigoplus_{a\in A} X_a$ and $\YB=\bigoplus_{b\in B} Y_b$ and a morphism $f\in \Hom_{\AB(\CS)}(\YB,\XB)$, then we define
\[
\AB(F)\Big(\bigoplus_{a\in A} X_a\Big) := \bigoplus_{a\in A} F(X_a) \, , \qquad \AB(F)(f)_{a,b} := F(f_{a,b}).
\]
\end{remark}

We have an analogue of Proposition \ref{prop:suspended envelope} as well.

\begin{proposition}\label{prop:additive envelope}
We have the following:
\begin{enumerate}
\item  there is a fully faithful dg functor $\eta_\CS\colon \CS\to \AB(\CS)$.
\item there is a equivalence of dg categories $\mu_\CS\colon \AB(\AB(\CS))\to \AB(\CS)$.
\item the pair $(\mu_\CS,\eta_\CS)$ satisfies appropriate associativity and unit axioms up to isomorphism.
\item $\CS$ is additive iff $\eta_\CS\colon \CS\to \AB(\CS)$ is an equivalence of dg categories.
\end{enumerate} 
\end{proposition}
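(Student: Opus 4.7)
The plan is to mirror the proof of Proposition \ref{prop:suspended envelope} step by step, replacing the role of shift indices with indexing sets for direct sums. For (1), I would define $\eta_\CS$ by sending $X$ to the formal singleton sum $\bigoplus_{a\in\{\ast\}}X$ and sending $f$ to the $1\times 1$ matrix with entry $f$. Full faithfulness is immediate from Definition \ref{def:additive envelope}, since the direct-sum hom formula collapses to the single summand $\Hom_\CS(Y,X)$ when both source and target are indexed over singletons.

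For (2), I would construct $\mu_\CS$ by ``flattening'' nested sums. An object of $\AB(\AB(\CS))$ is a formal sum $\bigoplus_{a\in A}\XB_a$ in which each $\XB_a=\bigoplus_{b\in B_a}X_{a,b}$ is itself a formal sum from $\AB(\CS)$. Setting $C:=\coprod_{a\in A}B_a$, define $\mu_\CS$ on objects by $\bigoplus_{a\in A}\XB_a\mapsto \bigoplus_{(a,b)\in C}X_{a,b}$. On morphisms, a matrix of matrices flattens to a single matrix indexed over $C\times C'$ (where $C'$ corresponds to the target), and compatibility with composition reduces to the observation that block matrix multiplication equals flattened matrix multiplication. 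Compatibility with differentials is clear. In fact this is an isomorphism, not merely an equivalence, of dg categories, with inverse given by any choice of partitioning.

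For (3), associativity $\mu_\CS\circ\AB(\mu_\CS)=\mu_\CS\circ\mu_{\AB(\CS)}$ both express the total flattening of a triply-nested sum $\bigoplus_a\bigoplus_b\bigoplus_c X_{a,b,c}$ into a single sum indexed over $\coprod_a\coprod_b C_{a,b}$, and the two agree on the nose under the obvious set-theoretic identification. The unit axioms $\mu_\CS\circ\AB(\eta_\CS)=\id=\mu_\CS\circ\eta_{\AB(\CS)}$ are immediate from the construction.

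For (4), one direction is easy: if $\eta_\CS$ is an equivalence, then it is essentially surjective up to isomorphism, so each formal sum $\bigoplus_{a\in A}X_a\in\AB(\CS)$ is isomorphic to $\eta_\CS(Y)$ for some $Y\in\CS$. Translating the isomorphism through the canonical inclusions and projections in $\AB(\CS)$ exhibits the pair $(\sigma_a,\pi_a)$ characterizing $Y$ as a realization of $\bigoplus_{a\in A}X_a$ in $\CS$. Conversely, if $\CS$ is additive, the axiom of choice provides a realization $Y_{\XB}$ for each object $\XB=\bigoplus_{a\in A}X_a\in\AB(\CS)$, and the defining equations for realizations determine a dg functor $\rho_\CS\colon\AB(\CS)\to\CS$ assembling these; the main point I would need to verify is that $\rho_\CS$ respects composition and differentials, which follows from the uniqueness-up-to-canonical-isomorphism of direct sum realizations. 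The natural isomorphisms $\rho_\CS\circ\eta_\CS\cong\id_\CS$ and $\eta_\CS\circ\rho_\CS\cong\id_{\AB(\CS)}$ come from the same uniqueness. The most delicate bookkeeping step is checking that $\rho_\CS$ is actually functorial despite the arbitrary choices, but because any two realizations are canonically isomorphic via the formula $\sum_a\sigma_a'\pi_a$, the dependence on choices washes out.
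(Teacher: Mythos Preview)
Your proposal is correct and is exactly the straightforward verification the paper has in mind; the paper's own proof reads in its entirety ``Straightforward exercise,'' so you have supplied more detail than the source. One small correction: $\mu_\CS$ is an equivalence but not literally an isomorphism of dg categories, since flattening is not a bijection on objects (the flattened sum is indexed by $\coprod_a B_a$, which is not every finite set on the nose), so ``with inverse given by any choice of partitioning'' should read ``with quasi-inverse.''
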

\begin{proof}
Straightforward exercise.
\end{proof}

%
\subsection{Adjoining twists}
\label{ss:twists}

We generally refer to the process of ``turning on differentials'' as \emph{twisting}.  For dg modules this looks as follows. If $M$ is a dg $\k$-module with differential $\d_M$, and $\a\in \End^1_\k(M)$ satisfies the Maurer-Cartan equation
\[
[\d_M,\a]+\a^2=0 \qquad (\text{equivalently $(\d_M+\a)^2=0$}),
\]
then we may consider the dg $\k$-module $M$ with \emph{twisted differential}, denoted
\[
\tw_\a(M) := (M,\d_M+\a).
\]

For more general dg categories, we say that $Y$ is a \emph{twist of $X$} if there is a degree zero invertibe (but not necessarily closed) morphism $\psi\in \Hom_\CS(X,Y)$.  In this situation, we construct an endomorphism $\a\in \End^\iota_\CS(X)$ by $\a:=\psi\inv\circ d_\CS(\psi)$, i.e.~$d_\CS(\psi) = \psi\circ \a$.  The fact that $d_\CS^2(\psi)=0$ is equivalent to the Maurer-Cartan equation
\begin{equation}\label{eq:MC}
d_\CS(\a)+\a^2 = 0.
\end{equation}
An endomorphism $\a\in \End^{\iota}_\CS(X)$ satisfying \eqref{eq:MC} will be called a \emph{Maurer--Cartan} endomorphism of $X$, and we will say that \emph{$Y$ realizes the twist of $X$ by $\a$}, written $Y\cong \tw_\a(X)$, if $Y$ is equipped with a degree zero invertible morphism $\psi \colon X\to Y$ such that $d_\CS(\psi)=\psi\circ \a$.  We say that $\CS$ is \emph{twisted} if all twists of all objects are realized in $\CS$.

The following establishes that a twist of $X$ is uniquely determined by its associated Maurer-Cartan endomorphism.
\begin{lemma}\label{lemma:uniqueness of twists}
If $(Y,\psi)$ and $(Y',\psi')$ both realize the twist $\tw_\a(X)$, then $\psi'\circ \psi\inv\colon Y\to Y'$ is an isomorphism.
\end{lemma}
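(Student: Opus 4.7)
The plan is to recognize that the only content of the statement is showing $\phi := \psi'\circ\psi^{-1}$ is closed. Indeed, the definition of isomorphism recorded in \S\ref{ss:dg basics} asks for a closed, degree zero, invertible morphism. Degree zero and invertibility are automatic: $\phi$ is a composition of two degree zero invertible morphisms, with two-sided inverse $\psi\circ(\psi')^{-1}$. So everything reduces to verifying $d_\CS(\phi) = 0$.

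The key auxiliary step is to extract a formula for $d_\CS(\psi^{-1})$ from the relation $\psi\circ\psi^{-1}=\id_Y$. Applying the (sign-free, since $\psi$ is degree zero) Leibniz rule to this identity gives
\[
d_\CS(\psi)\circ\psi^{-1}+\psi\circ d_\CS(\psi^{-1})=0,
\]
and then the defining twist relation $d_\CS(\psi)=\psi\circ\a$ yields $d_\CS(\psi^{-1}) = -\a\circ\psi^{-1}$. This is the only computation that requires a moment of thought; everything else is formal.

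With this in hand, another direct Leibniz calculation gives
\[
d_\CS(\psi'\circ\psi^{-1}) = d_\CS(\psi')\circ\psi^{-1} + \psi'\circ d_\CS(\psi^{-1}) = \psi'\circ\a\circ\psi^{-1} - \psi'\circ\a\circ\psi^{-1} = 0,
\]
where the first term uses $d_\CS(\psi')=\psi'\circ\a$ (since $(Y',\psi')$ also realizes $\tw_\a(X)$) and the second uses the formula for $d_\CS(\psi^{-1})$ just derived. There is no real obstacle: the proof is essentially a one-line cancellation once the formula for the differential of the inverse is in hand, and it is precisely the shared Maurer--Cartan datum $\a$ that makes the two terms cancel.
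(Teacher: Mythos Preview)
Your proof is correct and follows essentially the same approach as the paper: both derive $d_\CS(\psi^{-1}) = -\a\circ\psi^{-1}$ (you via the Leibniz rule on $\psi\circ\psi^{-1}=\id_Y$, the paper via the general formula $d(\psi^{-1}) = -\psi^{-1}\circ d(\psi)\circ\psi^{-1}$) and then apply the Leibniz rule to $\psi'\circ\psi^{-1}$ to see the two terms cancel.
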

\begin{proof}
First, observe that $d(\psi\inv) = - \psi\inv \circ d(\psi)\circ \psi\inv$, which equals $-\a\circ \psi\inv$. Then by the Leibniz rule
\[
d(\psi'\circ \psi\inv)  = (\psi'\circ \a)\circ \psi\inv - \psi'\circ (\a\circ \psi\inv) = 0
\]
So $\psi'\circ \psi$ is closed (in addition to being degree zero and invertible), hence is an isomorphism.
\end{proof}

\begin{remark}
Dg functors commute with twists: if $Y\cong \tw_\a(X)$, then for any dg functor $\CS\to  \DS$, applying $F$ to the equation $d_\CS(\psi) = \psi\circ \a$ shows that $F(Y)\cong \tw_{F(\a)}(F(X))$.
\end{remark}

Every dg category embeds fully faithfully in a twisted dg category, as we will see below.  We will adopt notational conventions that are similar in spirit to those used in \S \ref{ss:shifts} for the suspended envelope. 
\begin{definition}\label{def:twisted envelope}
Let $\Tw(\CS)$, the \emph{twisted envelope of $\CS$}, be the dg category with objects formal expressions $\tw_\a(X)$ with $X\in \CS$ and $\a\in \End^{\iota}_\CS(X)$ an endomorphism satisfying \eqref{eq:MC} and hom complexes defined by
\begin{equation}\label{eq:twisted homs}
\begin{tikzpicture}[baseline=-.1cm]
\node at (0,0) {$\Hom_{\Tw(\CS)}^k(\tw_\a(X),\tw_\b(Y))$};
\node at (2.5,0) {$:=$};
\node at (3.8,0) {$\Hom_\CS^{k}(X,Y)$};
\node at (1.7,-.75) {$f$};
\node at (2.5,-.75) {$\substack{\longmapsto \\ \longmapsfrom}$};
\node at (3.3, -.75) {$f$};
\end{tikzpicture}
\ , \qquad d_{\Tw(\CS)}(f) := d_\CS(f) + \b\circ f - (-1)^{\ip{\iota, |f|}} f\circ \a.
\end{equation}
When we want to distinguish between elements of $\Hom_\CS(X,Y)$ and $\Hom_{\Tw(\CS)}(\tw_\a(X),\tw_\b(Y))$, we may sometimes write elements of the latter as $f^\b_\a$.  

For each $X$, denote the object $\tw_0(X)\in \Tw(\CS)$ simply by $X$. 
\end{definition}


\begin{remark}
We clearly have
\[
d_{\Tw(\CS)}(\id^\a_0) = (\a)^\a_0 = (\id^\a_0)\circ \a
\]
so indeed the formal expression $\tw_\a(X)$ realizes the twist of $X$ by $\a$ in $\Tw(\CS)$ with structure map given by $\psi_{X,\a}=\id^\a_0$.  Further, the formula defining the differential of $f^\b_\a$ is exactly what is required to be consistent with the Leibniz rule applied to $f^\b_\a = \id^\b_0\circ f\circ \id^0_\a$.
\end{remark}

%

\begin{remark}\label{rmk:twist on functor}
Any dg functor $F\colon \CS\to \DS$ induces a dg functor $\Tw(F)\colon \Tw(\CS)\to \Tw(\DS)$ by the formulas
\[
\Tw(F)(\tw_\a(X)) := \tw_{F(\a)}(F(X)) \, ,\qquad \Tw(F)(f) = F(f)
\]
\end{remark}
%

We have an analogue of Propositions \ref{prop:suspended envelope} and \ref{prop:additive envelope} as well.

\begin{proposition}\label{prop:twisted envelope}
We have the following:
\begin{enumerate}
\item  there is a fully faithful dg functor $\eta_\CS\colon \CS\to \Tw(\CS)$ sending $X\mapsto \tw_0(X)$ and $f\mapsto f$.
\item there is a equivalence of dg categories $\mu_\CS\colon \Tw(\Tw(\CS))\to \Tw(\CS)$ sending $\tw_\a(\tw_{\a'}(X))\mapsto \tw_{\a+\a'}(X)$ and $f\mapsto f$.
\item the pair $(\mu_\CS,\eta_\CS)$ satisfies appropriate associativity and unit axioms up to isomorphism.
\item if $\CS$ is twisted iff $\eta_\CS\colon \CS\to \Tw(\CS)$ is an equivalence of dg categories, and conversely.
\end{enumerate} 
\end{proposition}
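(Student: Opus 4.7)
The plan is to mirror the proofs of Propositions \ref{prop:suspended envelope} and \ref{prop:additive envelope}: each of the four assertions reduces to a direct verification from Definition \ref{def:twisted envelope}. For (1), I would observe that setting $\a=\b=0$ in \eqref{eq:twisted homs} specializes the twisted differential to $d_\CS(f) + 0\circ f - (-1)^{\ip{\iota,|f|}} f\circ 0 = d_\CS(f)$, so the assignment $f\mapsto f$ gives a chain \emph{isomorphism} $\Hom_\CS(X,Y) \to \Hom_{\Tw(\CS)}(\tw_0(X), \tw_0(Y))$; functoriality is then immediate, and $\eta_\CS$ is fully faithful.

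For (2), the principal check is that $\mu_\CS$ is well-defined on objects. Given $\a'\in \End_\CS^\iota(X)$ Maurer--Cartan in $\CS$ and $\a\in \End_{\Tw(\CS)}^\iota(\tw_{\a'}(X))$ Maurer--Cartan in $\Tw(\CS)$, I need $\a+\a'$ to be MC in $\CS$. Unpacking \eqref{eq:twisted homs} for $f = \a$ and using $\ip{\iota,\iota}=1$ yields
\[
d_{\Tw(\CS)}(\a) = d_\CS(\a) + \a'\circ \a + \a\circ \a',
\]
so the two MC equations $d_{\Tw(\CS)}(\a)+\a^2=0$ and $d_\CS(\a')+(\a')^2=0$ add to $d_\CS(\a+\a')+(\a+\a')^2=0$, as required. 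A parallel unfolding shows the differential on $\Hom_{\Tw(\Tw(\CS))}(\tw_\a\tw_{\a'}(X),\tw_\b\tw_{\b'}(Y))$ agrees with that on $\Hom_{\Tw(\CS)}(\tw_{\a+\a'}(X),\tw_{\b+\b'}(Y))$, so $\mu_\CS$ is a fully faithful dg functor. Since it is also surjective on objects (take $\a'=0$), it is an equivalence. Part (3) then reduces to bookkeeping: both $\mu_\CS\circ \Tw(\eta_\CS)$ and $\mu_\CS\circ \eta_{\Tw(\CS)}$ act as the identity on $\Tw(\CS)$, and the associativity coherence $\mu_\CS\circ \Tw(\mu_\CS) = \mu_\CS\circ \mu_{\Tw(\CS)}$ holds because both composites send $\tw_\a(\tw_{\a'}(\tw_{\a''}(X)))$ to $\tw_{\a+\a'+\a''}(X)$ and act identically on morphisms.

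For (4), the forward direction proceeds as follows: assuming $\CS$ is twisted, given any $\tw_\a(X)\in \Tw(\CS)$ choose a realization $Y\in \CS$ of $\tw_\a(X)$ with structure map $\psi\colon X\to Y$ satisfying $d_\CS(\psi) = \psi\circ \a$; then viewing $\psi$ as a morphism $\tw_\a(X)\to \tw_0(Y) = \eta_\CS(Y)$ in $\Tw(\CS)$, its twisted differential is $d_\CS(\psi) + 0 - \psi\circ \a = 0$, so $\psi$ is a closed, degree zero, invertible morphism, hence an isomorphism; combined with (1), this makes $\eta_\CS$ an equivalence. For the converse, essential surjectivity of $\eta_\CS$ supplies, for each MC pair $(X,\a)$, a closed degree zero invertible $\phi\colon \tw_0(Y)\to \tw_\a(X)$ in $\Tw(\CS)$; closedness unpacks to $d_\CS(\phi) = -\a\circ \phi$, and applying the Leibniz rule to $\phi\circ \phi^{-1} = \id$ (as in the proof of Lemma \ref{lemma:uniqueness of twists}) yields $d_\CS(\phi^{-1}) = \phi^{-1}\circ \a$, exhibiting $Y$ as a realization of $\tw_\a(X)$ in $\CS$.

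The only real obstacle throughout is disciplined sign tracking, and the single place where it matters substantively is in the MC computation for (2): the identity $(-1)^{\ip{\iota,\iota}} = -1$ is precisely what produces the symmetric pair $\a\circ \a' + \a'\circ \a$ that is needed to recover the cross term of $(\a+\a')^2$. Everywhere else the signs collapse to $+1$ and the verifications are routine.
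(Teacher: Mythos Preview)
Your proposal is correct and follows the same approach as the paper. The paper's proof is essentially ``Straightforward exercise'' together with the single comment that the Maurer--Cartan equation for $\a$ in $\Tw(\CS)$ plus the one for $\a'$ in $\CS$ is equivalent to the Maurer--Cartan equation for $\a+\a'$ in $\CS$; this is precisely the computation you carry out for part (2), and your remaining verifications simply fill in the details the paper leaves to the reader.
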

\begin{proof}
Straightforward exercise.  Let us make one comment here, namely  that the Maurer-Cartan equation for an endomorphism $\a\in \End_{\Tw(\CS)}(\tw_{\a'}(X))$ reads
\[
\Big(d_\CS(\a)+\a'\circ \a + \a\circ \a'\Big) + \a^2 = 0
\]
Given that $\a'$ is a Maurer-Cartan endomorphism of $X$ in $\CS$, the above is equivalent to the statement that $d_\CS(\a+\a')+(\a+\a')^2=0$.  In other words, $\tw_\a(\tw_{\a'}(X))$ is a well-defined object of $\Tw(\Tw(\CS))$ if and only if both $\tw_{\a'}(X)$ and $\tw_{\a+\a'}(X)$ are well-defined objects of $\Tw(\CS)$.  This is the key fact which shows that $\Tw(\CS)$ is twisted, and enables one to construct the dg functor $\mu_\CS$ as in the statement.
\end{proof}

\subsection{Adjoining twisted complexes}
\label{ss:twisted complexes}

We may combine all three envelope operations, obtaining a dg category $\Tw\circ \AB\circ\SB(\CS)$, which we will abbreviate as $\Twix(\CS)$.  Objects of this category are called (finite) twisted complexes; formally they are expressions
\[
\tw_{\boldsymbol{\a}}\Big(\bigoplus_{a\in A} q^{i_a} X_a\Big)
\]
in which $A$ is a finite indexing set, $i_a\in \Gamma$, $X_a\in \CS$, and $\boldsymbol{\a}$ is an $A\times A$-matrix of morphisms $\a_{a,b}\in \Hom_\CS^{\iota-i_a+i_b}(X_b,X_a)$ satisfying the equations
\begin{equation}\label{eq:matrix MC}
(-1)^{i_a} d_\CS(\a_{a,b}) + \sum_{c\in I} \a_{a,c}\circ \a_{c,b} = 0
\end{equation}
for all $a,b\in A$.  We refer to \eqref{eq:matrix MC} as the \emph{matrix Maurer-Cartan equation}.

Let $\XB = \tw_{\boldsymbol{\a}}\Big(\bigoplus_{a\in A} q^{i_a} X_a\Big)$ and $\YB=\tw_{\boldsymbol{\b}}\Big(\bigoplus_{b\in B} q^{j_b} Y_b\Big)$ be twisted complexes.  A degree $k$ morphism $\mathbf{f}\colon \YB\to \XB$ will be given as an $A\times B$-matrix of morphisms $\mathbf{f}=\{f_{a,b}\}_{a,b}$ where $f_{a,b}\in \Hom_{\CS}^{k-i_a+j_b}(Y_b,X_a)$, with differential
\[
d_{\Twix(\CS)}(\{f_{a,b}\}_{a,b}) = \left\{
(-1)^{i_a} d_\CS(f_{a,b})
+\sum_{a'\in A}\a_{a,a'}\circ f_{a',b}
-(-1)^{\ip{\iota,k}}\sum_{b'\in B}f_{a,b'}\circ \b_{b',b}
\right\}_{a,b}.
\]

\begin{remark}\label{rmk:components of morphisms}
We warn the reader that the phrase ``$a,b$-component of $\mathbf{f}$'' is ambiguous, as it may refer to the literal $a,b$-component $\mathbf{f}_{a,b}\in \Hom_{\SB(\CS)}^k(q^{j_b}Y_b,q^{i_a}X_a)$, or it may refer to the \emph{bare} $a,b$-component $f_{a,b}\in \Hom_{\CS}^{k-i_a+j_b}(Y_b,X_a)$. The two are related by
\[
\mathbf{f}_{a,b} = (f_{a,b})^{i_a}_{j_b}
\]
in the notation of \eqref{eq:fij}.  We will always prefer to write $\mathbf{f}$ in terms of its bare components $f_{a,b}$.
\end{remark}

\begin{remark}
Combining what has been said already about shifts, finite direct sums, and twists, we may obtain an ``equational'' characterization of twisted complexes.  For a precise statement, let $A$ be a finite set and suppose we are given an $A$-indexed family of elements $i_a\in \Gamma$ and objects $X_a\in \CS$, as well as a $A\times A$-indexed family of morphisms $\boldsymbol{\a}= \{\a_{a,b}\in \Hom_{\CS}^{\iota-i_a+i_b}(X_b,X_a)\}_{a,b\in A}$ satisfying the matrix Maurer-Cartan equation \eqref{eq:matrix MC}.

Then the object $Y= \tw_{\boldsymbol{\a}}(\bigoplus_a q^{i_a} X_a)$ is uniquely characterized up to isomorphism by existence of $A$-indexed families of maps $\sigma_a\in \Hom_\CS^{i_a}(X_a,Y)$ and $\pi_a\in \Hom_\CS^{-i_a}(Y,X_a)$
such that
\[
d_\CS(\sigma_a) = \sum_{b\in A}  \sigma_b\circ \a_{b,a} \, , \qquad d_\CS(\pi_a)=  -(-1)^{\ip{\iota,i_a}}\sum_{b\in A} \a_{a,b}\circ \pi_b\, ,
\]
\[
\pi_a\circ \sigma_b = \begin{cases} \id_{X_a} & \text{ if $a=b$}\\ 0& \text{ otherwise}\end{cases}\, , \qquad 
\sum_{a\in A} \sigma_a\circ \pi_a = \id_Y
\]
In particular dg functors preserve twisted complexes: if $F\colon \CS\to \DS$ is a dg functor and $Y\cong \tw_{\boldsymbol{\a}}(\bigoplus_a q^{i_a} X_a)$, then $F(Y)\cong \tw_{F(\boldsymbol{\a})}(\bigoplus_a q^{i_a} F(X_a))$.
\end{remark}

A twisted complex is said to be \emph{one-sided} if $I$ admits a partial order with respect to which $\a$ is strictly lower triangular, i.e.~ $\a_{a,b}=0$ unless $a>b$.

\begin{definition}\label{def:pretr}
Let $\pretr(\CS)$, called the \emph{pretriangulated envelope of $\CS$}, denote the full subcategory of $\Twix(\CS)$ whose objects are the finite twisted complexes which are one-sided (with respect to some partial order on the indexing set).
\end{definition}

\begin{definition}\label{def:cone and cocone}
If $\xi\in \Hom^\iota_\CS(X,Y)$ is closed then we have the \emph{extension} of $X$ by $Y$, defined by
\[
(X\buildrel\xi\over \to Y) := \tw_{\smMatrix{0&0\\ \xi&0}}(X\oplus Y).
\]
If $f\in \Hom^0_{\CS}(X,Y)$ is closed and degree zero, then we must shift either $X$ or $Y$ in order to give $f$ degree $\iota$.  The resulting extensions are callled the \emph{cone} and \emph{cocone}, respectively:
\[
\begin{tikzpicture}[anchorbase]
\node at (-2,0) {$\Cone(f)$};
\node at (-1,0) {$:=$};
\node at (-.6,0) {$\Big($};
\node (a) at (0,0) {$q^{-\iota}X$};
\node (b) at (2,0) {$Y$};
\node at (2.3,0) {$\Big)$};
\path[-stealth]
(a) edge node[above] {$f^0_{-\iota}$} (b);
\end{tikzpicture}
\qquad \qquad
\begin{tikzpicture}[anchorbase]
\node at (-2.2,0) {$\mathrm{Cocone}(f)$};
\node at (-1,0) {$:=$};
\node at (-.4,0) {$\Big($};
\node (a) at (0,0) {$X$};
\node (b) at (2,0) {$q^{\iota}Y$};
\node at (2.5,0) {$\Big)$};
\path[-stealth]
(a) edge node[above] {$f^{\iota}_0$} (b);
\end{tikzpicture},
\]
where we are using the notation from \eqref{eq:fij}.
\end{definition}

The obvious analogue of Propositions \ref{prop:suspended envelope}, \ref{prop:additive envelope}, and \ref{prop:twisted envelope} holds for $\pretr$.  Namely, $\pretr$ has the structure of an idempotent monad acting on $\dgcat$, and $\pretr(\CS)\cong \CS$ if and only if $\CS$ is closed under taking finite one-sided twisted complexes.  The monad structure on $\Twix$ and $\pretr$ was observed already in \cite{BonKap90}.

\begin{remark}\label{rmk:negatively graded}
Suppose the grading group $\Gamma$ comes equipped with a subset $\Gamma^-\subset \Gamma$ which is closed under addition, and $0\not\in \Gamma^-$, equivalently $\Gamma^-\cap (-\Gamma^-)=\emptyset$.

The choice of $\Gamma^-$ endows $\Gamma$ with a partial order, by declaring  $i\leq j$ if $i-j\in \Gamma^-\sqcup \{0\}$. As usual we write $i<j$ if $i\leq j$ and $i\neq j$.  Note that we do not require that $\Gamma = \Gamma^-\sqcup \{0\}\sqcup (-\Gamma^-)$, i.e.~ not every element of $\Gamma$ is comparable to 0.

Now, assume that $\iota>0$,  and suppose $\CS$ is a dg category with morphisms supported in degrees $|f|\leq 0$.  Then $\Twix(\CS)$ is equivalent to $\pretr(\CS)$.  

Indeed, if $\XB = \tw_{\boldsymbol{\a}}(\bigoplus_{a\in A} X_a)$ is an object of $\Twix(\CS)$ then $\a_{a,b}\in \Hom^{\iota-i_a+i_b}_\CS(X_b,X_a)$ is zero unless $\iota-i_a+i_b\leq 0$, i.e.~$i_a\geq i_b+\iota$.  In particular $\boldsymbol{\a}$ is strictly lower-triangular with respect to the partial order given by declaring $a\geq b$ if $i_a\geq i_b$.
\end{remark}

\begin{example}\label{ex:pretriangulated hull of additive cat}
In this example we work with the standard grading group for complexes, with the cohomological convention for differentials, i.e.~ take $\Gamma=\Z$ with $\iota =1$ and $\ip{i,j}=ij$ (mod 2).

Let $\CS$ be an ordinary $\k$-linear category.  We regard $\CS$ as a dg category with trivial grading (all morphisms have degree zero) and differential (all morphisms are closed).  Suppose further that $\AB(\CS)\cong \CS$, i.e.~ $\CS$ is additive.   Then $\pretr(\CS)$ is equivalent to the usual (dg) category $\Ch^b(\CS)$ of finite complexes over $\CS$.

Indeed, if $\XB = \tw_{\boldsymbol{\a}}(\bigoplus_{a\in A} q^{i_a}X_a)$ is an object of $\pretr(\CS)$, then the components of the twist $\boldsymbol{\a}$ are $\a_{a,b}\in \Hom^{1-i_a+j_b}_\CS(X_b,X_a)$, which is zero unless $j_b-i_a=1$. Thus, we may rearrange
\[
\tw_\a\Big(\bigoplus_{a\in A} q^{i_a} X_a\Big) \cong \tw_\a \Big(\bigoplus_{i\in \Z} q^i \bigoplus_{\substack{a\in A \\ i_a = i}} X_a\Big) =: \tw_{\boldsymbol{\d}} \Big(\bigoplus_{i\in \Z} q^i X^i\Big)
\]
Which may be expressed as a complex in the traditional sense
\begin{equation}\label{eq:CX ex}
\begin{tikzpicture}[baseline=0cm]
\node (a) at (-.5,0)  {$\cdots$};
\node (b) at (1.5,0) {$X^i$};
\node (c) at (3,0) {$X^{i+1}$};
\node (d) at (5,0)  {$\cdots$};
\path[-stealth]
(a) edge node[above] {$\d_{i,i-1}$} (b)
(b) edge node[above] {$\d_{i+1,i}$} (c)
(c) edge node[above] {$\d_{i+2,i+1}$} (d);
\end{tikzpicture}
\end{equation}
Note that $d_\CS=0$, so the Maurer-Cartan equation \eqref{eq:matrix MC} for $\boldsymbol{\d}$ is the usual equation $\boldsymbol{\d}^2=0$ for differentials.
\end{example}

\subsection{(Co)completions}
\label{ss:cocompletion}
We conclude this section with a discussion on the process of adjoining infinite one-sided twisted complexes.  There are two kinds of such infinite twisted complexes and, importantly, adjoining them to a dg category is not an idempotent operation.

\begin{definition}\label{def:ch amalg}
Let $\CS$ be a dg category.  Define $\pretr^{\amalg}(\CS)$ to be the category whose objects are formal expressions of the form
\[
\tw_\a\left( \coprod_{a\in A} q^{i_a} X_a\right)
\]
where $A$ is a partially ordered set satisfying the property that for each $a\in A$ the set $\{b\in A\:|\: b\geq a\}$ is finite, and $\a$ is an $A\times A$-matrix of morphisms $\a_{a,b}\in \Hom_\CS^{\iota+i_a-i_b}(X_b,X_a)$ satisfying
\begin{enumerate}
\item $\a_{a,b}=0$ unless $a>b$.
\item the matrix Maurer-Cartan equation \eqref{eq:matrix MC} is satisfied for all $a,b\in A$.
\end{enumerate}
The hom complexes in $\pretr^{\amalg}(\CS)$ are defined by
\[
\Hom_{\pretr^{\amalg}(\CS)}^l\left(\tw_\b\left( \coprod_{b\in B} q^{j_b} Y_b\right),\tw_\a\left( \coprod_{a\in A} q^{i_a} X_a\right)\right):=\prod_{b\in B}\coprod_{a\in A} \Hom_{\CS}^{l+j_b-i_a}(Y_b,X_a).
\]
where the product and coproduct are taken in the category of $\Gamma$-graded $\k$-modules.
An element of the above hom complex is a strictly lower triangular, column-finite $A\times B$-matrix $f=(f_{a,b})_{(a,b)\in A\times B}$ (in which $f_{a,b}\in \Hom_\CS^{l+j_b-i_a}(Y_b,X_a)$).  The column-finiteness means that for each $b\in B$ we have $f_{a,b}=0$ for all but finitely many $a\in A$, as indicated by the coproduct over $a\in A$.  In the language of matrices, the differential of $f=(f_{a,b})$ of degree $l=|f|$ is given by
\[
d(f)_{a,b} = (-1)^{i_a} d(f_{a,b}) + \sum_{a'\in A} \a_{a,a'}\circ f_{a',b} - (-1)^{\ip{\iota,l}} \sum_{b'\in B} f_{a,b'}\circ \b_{b',b}.
\]
\end{definition}

Replacing column-finiteness with row-finiteness defines $\pretr^\Pi$.

\begin{definition}\label{def:ch Pi}
Let $\CS$ be a dg category.  Define $\pretr^{\Pi}(\CS)$ to be the category whose objects are formal expressions of the form
\[
\tw_\a\left( \prod_{a\in A} q^{i_a} X_a\right)
\]
where $A$ is a partially ordered set satisfying the property that for each $a\in A$ the set $\{b\in A\:|\: b\leq a\}$ is finite, and $\a$ is an $A\times A$-matrix of morphisms $\a_{a,b}\in \Hom_\CS^{\iota+i_a-i_b}(X_b,X_a)$ satisfying
\begin{enumerate}
\item $\a_{a,b}=0$ unless $a>b$.
\item the matrix Maurer-Cartan equation \eqref{eq:matrix MC} is satisfied for all $a,b\in A$.
\end{enumerate}
The hom complexes in $\pretr^{\Pi}(\CS)$ are defined by
\[
\Hom_{\pretr^{\amalg}(\CS)}^l\left(\tw_\b\left( \prod_{b\in B} q^{j_b} Y_b\right),\tw_\a\left( \prod_{a\in A} q^{i_a} X_a\right)\right):=\prod_{a\in A}\coprod_{b\in B} \Hom_{\CS}^{l+j_b-i_a}(Y_b,X_a).
\]
where the product and coproduct are taken in the category of $\Gamma$-graded $\k$-modules.
An element of the above hom complex is a strictly lower triangular, row-finite $A\times B$-matrix $f=(f_{a,b})_{(a,b)\in A\times B}$ (in which $f_{a,b}\in \Hom_\CS^{l+j_b-i_a}(Y_b,X_a)$).  The row-finiteness means that for each $a\in A$ we have $f_{a,b}=0$ for all but finitely many $b\in B$, as indicated by the coproduct over $b\in B$.
\end{definition}

\section{Opposites and tensor products of dg categories, and their envelopes}

\subsection{Opposite category}
\label{ss:opp}
\begin{definition}\label{def:opposite cat}
If $\CS$ is a dg category we let $\CS^{\op}$ denote the \emph{opposite dg category}.  Objects of $\CS^\op$ are formal expressions $X^{\op}$ where $X\in \CS$, with hom complexes
\[
\Hom_{\CS^{\op}}(Y^{\op},X^{\op}) :=\Hom_\CS(X,Y).
\]
For each $f\in \Hom_{\CS}(X,Y)$ we let $f^{\op}$ denote the corresponding element of $\Hom_{\CS^\op}(Y^{\op},X^{\op})$.  The grading, differential, and composition in $\CS^{\op}$ are defined by
\[
|f^{\op}| = |f| \ ,\qquad d_{\CS^{\op}}(f^{\op}) = d(f)^{\op} \ , \qquad f^{\op}\circ g^{\op}:= (-1)^{\ip{|f|,|g|}}(g\circ f)^{\op}.
\]
\end{definition}
\begin{remark}\label{rmk:contravariant functor}
If $F\colon \CS^{\op}\to \DS$ is a dg functor, then we sometimes we drop the superscript $(-)^\op$, regarding $F$ as a \emph{contravariant dg functor} $\CS\to \DS$.
\end{remark}

\begin{proposition}\label{prop:envelopes and op}
If $\EB$ is one of the envelope operations $\EB\in \{\SB,\AB,\Tw,\Twix,\pretr\}$, then for any dg category $\CS$ we have $\EB(\CS)^{\op}\cong \EB(\CS^{\op})$.
\end{proposition}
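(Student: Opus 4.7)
The plan is to construct, for each envelope $\EB$, an explicit dg isomorphism $\Phi_\EB \colon \EB(\CS^\op) \xrightarrow{\sim} \EB(\CS)^\op$, case by case for $\EB \in \{\SB,\AB,\Tw\}$, and then to assemble these by composition to handle $\Twix = \Tw\circ \AB \circ \SB$ and its full subcategory $\pretr$. In every case $\Phi_\EB$ is essentially the identity on underlying $\CS$-level data after reindexing and a prescribed sign, and the real content lies in verifying that the Koszul rule $f^\op \circ g^\op = (-1)^{\ip{|f|,|g|}}(g\circ f)^\op$ in the opposite category is compatible with the sign conventions built into each envelope (the shift sign in $\SB$, the matrix indexing in $\AB$, and the Maurer--Cartan equation in $\Tw$).

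For $\SB$, a direct degree count,
\[
\Hom^l_{\SB(\CS^\op)}(q^iX^\op,q^jY^\op) = \Hom^{l+i-j}_\CS(Y,X) = \Hom^l_{\SB(\CS)^\op}((q^{-i}X)^\op,(q^{-j}Y)^\op),
\]
forces the object-level assignment $q^i X^\op \mapsto (q^{-i}X)^\op$, and on morphisms $\Phi_\SB$ agrees with the identity on bare data up to a prescribed sign of the form $(-1)^{\ip{l,i-j}}$ dictated by compatibility with the differential \eqref{eq:shifted homs} (using $\ip{\iota,\iota}=1$). For $\AB$, the object-level assignment $\bigoplus_a X_a^\op \mapsto (\bigoplus_a X_a)^\op$ together with matrix transpose on morphisms works without further signs: since every entry of an $\AB$-morphism has the same total degree, the Koszul sign produced by composing opposites in $\AB(\CS^\op)$ factors out as a global scalar that exactly matches the one produced on the $\AB(\CS)^\op$ side. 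For $\Tw$, the key calculation is that $\a^\op \in \End^\iota_{\CS^\op}(X^\op)$ is Maurer--Cartan in $\CS^\op$ iff $d_\CS(\a) + (-1)^{\ip{\iota,\iota}}\a^2 = d_\CS(\a) - \a^2 = 0$, i.e., iff $-\a$ is Maurer--Cartan in $\CS$; this dictates $\Phi_\Tw(\tw_{\a^\op}(X^\op)) := (\tw_{-\a}(X))^\op$, and the identification on hom complexes is then pinned down by compatibility with \eqref{eq:twisted homs}.

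Assembling these by composition yields, on objects of $\Twix(\CS^\op)$,
\[
\tw_{\boldsymbol\a}\Big(\bigoplus_{a\in A} q^{i_a} X_a^\op\Big) \;\longmapsto\; \Big(\tw_{-\boldsymbol\a^T}\Big(\bigoplus_{a\in A} q^{-i_a} X_a\Big)\Big)^\op,
\]
where $\boldsymbol\a^T$ is the transposed matrix (with any additional Koszul signs absorbed into the object identifications). For $\pretr$ one checks that $\boldsymbol\a$ is strictly lower-triangular with respect to a partial order $\leq$ on $A$ iff $\boldsymbol\a^T$ is strictly lower-triangular with respect to the opposite partial order $\geq$ on the same index set $A$, so $\Phi_{\Twix}$ restricts to the desired isomorphism $\pretr(\CS^\op) \cong \pretr(\CS)^\op$. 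The main obstacle throughout is sign bookkeeping: four sources of signs are in play simultaneously (Koszul from $\CS^\op$, shift-inversion in $\SB$, matrix transposition in $\AB$, and MC-flip in $\Tw$), and the cleanest strategy is to fix a single coherent convention---$\Phi_\EB$ acts as the identity on bare morphism data up to an explicit sign, with all remaining signs absorbed into the object identifications $q^i \leftrightarrow q^{-i}$ and $\a\leftrightarrow-\a$---and then verify the four envelope cases independently before composing.
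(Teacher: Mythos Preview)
Your overall strategy is exactly the paper's: treat $\SB$, $\AB$, $\Tw$ separately by writing down explicit isomorphisms that are the identity on bare $\CS$-data up to a sign, then compose for $\Twix$ and restrict for $\pretr$. The direction you chose ($\EB(\CS^\op)\to\EB(\CS)^\op$ rather than the paper's $\EB(\CS)^\op\to\EB(\CS^\op)$) is immaterial, and your handling of $\AB$, $\Tw$, and the one-sidedness check for $\pretr$ matches the paper.

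There is one genuine gap, in the $\SB$ case. You claim the sign is ``of the form $(-1)^{\ip{l,i-j}}$ dictated by compatibility with the differential''. Differential compatibility alone does \emph{not} pin down the sign: it only imposes the constraint $\mathrm{sign}(|f|+\iota,i,j)-\mathrm{sign}(|f|,i,j)\equiv\ip{\iota,i+j}$, which has many solutions. The sign you wrote down does satisfy this constraint, but it fails to be compatible with \emph{composition}. Concretely, with $p=j-i$ and $q=k-j$, the discrepancy between $\Phi_\SB$ applied to a composite and the composite of the $\Phi_\SB$'s works out to $(-1)^{\ip{p,q}}$, which is not identically $1$. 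The paper's sign (in its direction) is $(-1)^{\ip{i+j,\,i+|f|}}$, which is more intricate precisely because it must satisfy both constraints simultaneously. So the repair is: do not trust the differential alone to fix the sign; write down the composition constraint as well and solve both. Once the $\SB$ sign is corrected, the rest of your argument goes through, and the ``additional Koszul signs absorbed into the object identifications'' you allude to for $\Twix$ become the explicit signs \eqref{eq:opposite sign a}--\eqref{eq:opposite sign f} in the paper.
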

\begin{proof}
We define $\SB(\CS)^{\op}\buildrel\cong\over\rightarrow \SB(\CS^{\op})$ by
\[
\displaystyle
(q^k X)^{\op}\mapsto q^{-k}X^{\op} \ ,\qquad (f^i_j)^{\op} \mapsto (-1)^{\ip{i+j,i+|f|}}(f^{\op})^{-j}_{-i} 
\]
We define $\AB(\CS)^{\op}\buildrel\cong\over\rightarrow \AB(\CS^{\op})$ by
\[
(\bigoplus_{a\in A} X_a)^{\op}\mapsto \bigoplus_{a\in A} X_a^\op \ ,\qquad \{f_{a,b}\}_{a,b}^{\op}\mapsto \{f_{a,b}^\op\}_{b,a}
\]
We define $\Tw(\CS)^{\op}\buildrel\cong\over\rightarrow \Tw(\CS^{\op})$  by
\[
\tw_\a(X)^{\op}\leftrightarrow \tw_{-\a^{\op}}(X^{\op}) \ , \qquad f\mapsto f.
\]
It is an exercise to check that each of these defines a dg functor (each is clearly invertible).  Combining all three isomorphisms above defines $\Twix(\CS)^{\op}\buildrel\cong \over \rightarrow \Twix(\CS^{\op})$, which acts on objects by
\[
\tw_{\boldsymbol{\a}} \Big(\bigoplus_{a\in A} q^{i_a} X_a\Big)^{\op} \mapsto \tw_{\boldsymbol{\a}'} \Big(\bigoplus_{a\in A} q^{-i_a} X_a^{\op}\Big) 
\]
where $\boldsymbol{\a}'$ has components
\begin{equation}\label{eq:opposite sign a}
\a'_{b,a}  = (-1)^{1+\ip{i_a+i_b,\iota}+i_b} \a_{a,b}
\end{equation}
A degree $l$ morphism $\mathbf{f}^{\op}$, in which $\mathbf{f}$ has bare components $f_{a,b}\in \Hom_\CS^{l-i_a+j_b}(Y_b,X_a)$, maps as
\begin{equation}\label{eq:opposite sign f}
\mathbf{f}^{\op} = \Big(\{f_{a,b}\}_{a,b}\Big)^\op \mapsto \Big\{(-1)^{\ip{i_a+j_b,l+j_b}} (f_{a,b})^{\op}\Big\}_{b,a}
\end{equation}
\end{proof}

\begin{remark}
If $F\colon \CS \to \DS$ is a contravariant functor, then there is an induced contravariant functor $\EB(F)\colon \EB(\CS)\to \EB(\DS)$ obtained as the composition
\[
\EB(\CS)^{\op}\to \EB(\CS^{\op})\to \EB(\DS).
\]
where the first functor is from Proposition \ref{prop:envelopes and op}, and the second is the extension of the dg functor $F$ to the envelope.
\end{remark}

\begin{example}
If $\CS$ is a $\k$-linear category, thought of as a dg category with trivial grading and differential, then the sign rule \eqref{eq:opposite sign f} simplifies to 
\[
\mathbf{f}^{\op} = \Big(\{f_{a,b}\}_{a,b}\Big)^\op \mapsto \Big\{(-1)^{\ip{l,i_a}} (f_{a,b})^{\op}\Big\}_{b,a}
\]
since $f_{a,b}\neq 0$ implies $l=|\mathbf{f}|=j_b-i_a$.
\end{example}

\begin{example}
In this example let $\Gamma=\Z$, $\iota=1$ and $\ip{i,j}=ij$ (mod 2).  If $\CS$ and $\DS$ are $\k$-linear categories and $F\colon \CS\to \DS$ is a contravariant functor then the induced contravariant functor $\Ch^b(\CS)\to\Ch^b(\DS)$ sends
\[
\begin{tikzpicture}[baseline=0cm]
\node (a) at (0,0)  {$\cdots$};
\node (b) at (1.5,0) {$X^i$};
\node (c) at (3,0) {$X^{i+1}$};
\node (d) at (4.5,0)  {$\cdots$};
\path[-stealth]
(a) edge node[above] {} (b)
(b) edge node[above] {\tiny{$\d_{i+1,i}$}} (c)
(c) edge node[above] {} (d);
\end{tikzpicture}
\qquad\mapsto \qquad 
\begin{tikzpicture}[baseline=0cm]
\node (a) at (-.5,0)  {$\cdots$};
\node (b) at (1.5,0) {$F(X^{i+1})$};
\node (c) at (5,0) {$F(X^{i})$};
\node (d) at (6.8,0)  {$\cdots$};
\path[-stealth]
(a) edge node[above] {} (b)
(b) edge node[above] {\tiny $(-1)^i F(\d_{i+1,i})$} (c)
(c) edge node[above] {} (d);
\end{tikzpicture}
\]
and a degree $l$-morphism of complexes (not necessarily closed) maps to
\[
\begin{tikzpicture}[baseline=-1cm]
\node (a) at (0,0)  {$\cdots$};
\node (b) at (1.5,0) {$X^i$};
\node (c) at (4,0) {$X^{i+1}$};
\node (d) at (5.5,0)  {$\cdots$};
\node (a1) at (0,-2)  {$\cdots$};
\node (b1) at (1.5,-2) {$Y^{j}$};
\node (c1) at (4,-2) {$Y^{j+1}$};
\node (d1) at (5.5,-2)  {$\cdots$};
\path[-stealth]
(a) edge node[above] {} (b)
(b) edge node[above] {\tiny{$\d_{i+1,i}$}} (c)
(c) edge node[above] {} (d)
(a1) edge node[above] {} (b1)
(b1) edge node[above] {\tiny{$\epsilon_{j+1,j}$}} (c1)
(c1) edge node[above] {} (d1)
(b1) edge node[right] {\tiny{$f_{i,j}$}} (b)
(c1) edge node[right] {\tiny{$f_{i+1,j+1}$}} (c);
\end{tikzpicture}
\mapsto
\begin{tikzpicture}[baseline=-1cm]
\node (a) at (-.5,0)  {$\cdots$};
\node (b) at (1.5,0) {{$F(X^{i+1})$}};
\node (c) at (5,0) {{$F(X^{i})$}};
\node (d) at (6.8,0)  {$\cdots$};
\node (a1) at (-.5,-2)  {$\cdots$};
\node (b1) at (1.5,-2) {{$F(Y^{j+1})$}};
\node (c1) at (5,-2) {{$F(Y^{j})$}};
\node (d1) at (6.8,-2)  {$\cdots$};
\path[-stealth]
(a) edge node[above] {} (b)
(b) edge node[above] {\tiny{$(-1)^i F(\d_{i+1,i})$}} (c)
(c) edge node[above] {} (d)
(a1) edge node[above] {} (b1)
(b1) edge node[above] {\tiny{$(-1)^j F(\epsilon_{j+1,j})$}} (c1)
(c1) edge node[above] {} (d1)
(c) edge node[right] {\tiny{$(-1)^{li}F(f_{i,j})$}} (c1)
(b) edge node[right] {\tiny{$(-1)^{li+l}F(f_{i+1,j+1})$}} (b1);
\end{tikzpicture}
\]
where $i-j = l$.  That this is a well-defined contravariant dg functor $\Ch^b(\CS)\to \Ch^b(\DS)$ can be checked directly, or can be deduced from Proposition \ref{prop:envelopes and op}.

\end{example}
\subsection{Tensor product of dg categories}
\label{ss:tensor}

\begin{definition}\label{def:tensor prod}
If $\CS$ and $\DS$ are dg categories, then we let $\CS\otimes \DS$ be the dg category with $\Obj(\CS\otimes \DS) = \Obj(\CS)\times \Obj(\DS)$ and morphism complexes
\[
\Hom_{\CS\otimes \DS}\Big((X,Y),(X',Y')\Big) :=\Hom_\CS(X,X')\otimes\Hom_\DS(Y,Y'),
\]
with composition $(f'\otimes g')\circ (f\otimes g)=(-1)^{\ip{|g'|,|f|}} (f'\circ f)\otimes (g'\circ g)$.
\end{definition}
\begin{remark}
In order to avoid a proliferation of parentheses, we will often write $X\otimes Y$ to denote the object $(X,Y)$ in $\CS\otimes \DS$.
\end{remark}

\begin{remark}
If $f$ and $g$ are invertible morphisms in $\CS$ and $\DS$, respectively, then $f\otimes g$ is invertible in $\CS\otimes \DS$, with inverse
\begin{equation}\label{eq:inverse of tensor}
(f\otimes g)\inv = (-1)^{\ip{|f|,|g|}}(f\inv\otimes g\inv).
\end{equation}
\end{remark}

Next, for each envelope construction $\EB\in \{\SB,\AB,\Tw,\Twix,\pretr\}$ and each pair of dg categories $\CS,\DS$ we wish to define a fully faithful functor $\EB(\CS)\otimes \EB(\DS)\hookrightarrow \EB(\CS\otimes \DS)$.  We denote this functor as a binary operation $\ast_\EB$ (or simply $\ast$ when the envelope $\EB$ is understood).

\begin{definition}\label{def:S and tensor}
Let $\CS,\DS$ be dg categories.  Define $\ast_\SB \colon \SB(\CS)\otimes \SB(\DS)\to \SB(\CS\otimes \DS)$ on objects by
\[
q^iX \ast_\SB q^jY := q^{i+j} X\otimes Y,
\]
and on morphisms by 
\begin{equation}\label{eq:tensor sign}
f^{i'}_i\ast_\SB g^{j'}_j := (-1)^{\ip{k,j'}+\ip{i,j'+l+j}}(f\otimes g)^{i'+j'}_{i+j},
\end{equation}
where $f\in \Hom_{\CS}^k(X, X')$, $g\in \Hom_{\DS}^l(Y, Y')$, and $i,i'j,j'\in \Gamma$, and we are using the notation from \eqref{eq:fij}.
\end{definition}


\begin{remark}\label{rmk:tensor sign}
The sign in \eqref{eq:tensor sign} can be justified (and remembered) by considering the Koszul sign rule involved in the following rearrangement of factors:
\[
(\phi_{X',i'}\circ f\circ \phi_{X,i}\inv) \otimes (\phi_{Y',j'}\circ g\circ \phi_{Y,j}\inv) = \pm (\phi_{X',i'}\otimes \phi_{Y',j'}) \circ (f\otimes g) \circ (\phi_{X,i}\otimes \phi_{Y,j})\inv
\]
In particular, $(-1)^{\ip{k+i,j'}}$ comes from commuting $\phi_{Y',j'}$ past $\phi_{X,i}\inv$ and $f$, the sign $(-1)^{\ip{i,l}}$ comes from commuting $g$ past $\phi_{X,i}\inv$, and the contribution of $(-1)^{\ip{i,j}}$ comes from the fact that
\[
(\phi_{X,i}\inv\otimes \phi_{Y,j}\inv) = (-1)^{\ip{i,j}} (\phi_{X,i}\otimes \phi_{Y,j})\inv,
\]
using \eqref{eq:inverse of tensor}.
\end{remark}

For additive envelopes we have the following.
\begin{definition}\label{def:A and tensor}
Let $\CS,\DS$ be dg categories.  Define $\ast_\AB\colon \AB(\CS)\otimes \AB(\DS)\to \AB(\CS\otimes \DS)$ on objects by
\[
\left(\bigoplus_{a\in A}X_a\right)\ast_\AB \left(\bigoplus_{b\in B}Y_b\right) :=\bigoplus_{(a,b)\in A\times B} X_a\otimes Y_b,
\]
and on morphisms by
\[
(f\ast g)_{(a,b),(a',b')} = f_{a,a'}\otimes g_{b,b'},
\]
where $f\colon \XB'\to \XB$ and $g\colon \YB'\to\YB$ are morphisms with components denoted by $f_{a,a'}$ and $g_{b,b'}$ respectively, with $a,a',b,b'$ running over appropriate finite sets of indices.
\end{definition}

And finally, for twisted envelopes we have the following.

\begin{definition}\label{def:Tw and tensor}
Let $\CS,\DS$ be dg categories.  Define $\ast_{\Tw}\colon \Tw(\CS)\otimes \Tw(\DS)\to \Tw(\CS\otimes \DS)$ on objects by
\[
\tw_\a(X)\ast_{\Tw} \tw_\b(Y) :=\tw_{\a\otimes \id+\id\otimes \b}(X\otimes Y)
\]
and on morphisms by
\[
f\ast_{\Tw} g := f\otimes g.
\]
\end{definition}

\begin{proposition}\label{prop:envelope and tensor}
Let $\EB$ denote one of the envelope construction $\EB\in \{\SB,\AB,\Tw\}$. The operation $\ast_{\EB}$ from Definitions \ref{def:S and tensor} (resp.~Definition \ref{def:A and tensor}, resp.~Definition \ref{def:Tw and tensor}) is a fully faithful dg functor $\EB(\CS)\otimes \EB(\DS)\hookrightarrow \EB(\CS\otimes \DS)$. 
\end{proposition}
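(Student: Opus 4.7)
The plan is to treat each of $\SB$, $\AB$, $\Tw$ in turn, following a common two-step pattern: first verify that $\ast_\EB$ is a well-defined dg functor (checking identities, composition, and compatibility with the differential), and then observe that the fully faithful claim is automatic because the map on underlying graded hom modules is an isomorphism.

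For $\AB$, bilinearity of tensor products over finite direct sums gives a canonical isomorphism
\[
\Hom_{\AB(\CS)}(\XB', \XB) \otimes \Hom_{\AB(\DS)}(\YB', \YB) \;\cong\; \Hom_{\AB(\CS \otimes \DS)}(\XB' \ast \YB',\, \XB \ast \YB)
\]
under which $\ast_\AB$ is literally the identity. Composition is matrix multiplication of the componentwise tensor of matrix entries, and the differential acts componentwise, so functoriality and the Leibniz rule hold with no further work.

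For $\Tw$, I would first verify that $\a \otimes \id + \id \otimes \b$ is a Maurer-Cartan endomorphism of $X \otimes Y$ whenever $\a$ and $\b$ are such on $X$ and $Y$ respectively. The cross-terms $(\a \otimes \id)(\id \otimes \b)$ and $(\id \otimes \b)(\a \otimes \id)$ cancel: the interchange law \eqref{eq:interchange law} contributes the sign $(-1)^{\ip{\iota,\iota}} = -1$ to one but not the other. The diagonal terms reassemble as $(d\a + \a^2) \otimes \id + \id \otimes (d\b + \b^2) = 0$ by the Maurer-Cartan equations for $\a$ and $\b$ separately. The induced map on underlying graded hom modules is the identity on $\Hom_\CS(X,X') \otimes \Hom_\DS(Y,Y')$, so full faithfulness reduces to matching differentials; this follows by a direct expansion in which one applies \eqref{eq:interchange law} to commute $\a \otimes \id$ and $\id \otimes \b$ past a tensor $f \otimes g$, reproducing the tensor-of-twisted-differentials on the source.

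For $\SB$, the two hom complexes
\[
\Hom^m_{\SB(\CS) \otimes \SB(\DS)}\bigl(q^i X \otimes q^j Y,\; q^{i'} X' \otimes q^{j'} Y'\bigr) \quad \text{and} \quad \Hom^m_{\SB(\CS \otimes \DS)}\bigl(q^{i+j}(X \otimes Y),\; q^{i'+j'}(X' \otimes Y')\bigr)
\]
both decompose, via \eqref{eq:shifted homs}, into the same direct sum of pieces $\Hom^p_\CS(X,X') \otimes \Hom^q_\DS(Y,Y')$ with $p + q = m + i + j - i' - j'$, so the underlying graded-module identification is automatic. The real content is the sign $(-1)^{\ip{k,j'} + \ip{i,\,j' + l + j}}$ appearing in \eqref{eq:tensor sign}, which must be arranged so that $\ast_\SB$ respects identities, composition, and the differential. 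The cleanest route, and the one I would take, is the recipe of Remark \ref{rmk:tensor sign}: using \eqref{eq:fij} to rewrite each $f^{i'}_i = \phi_{X',i'} \circ f \circ \phi_{X,i}^{-1}$, the required sign is precisely the Koszul sign obtained from rearranging factors into the form $(\phi_{X', i'} \otimes \phi_{Y', j'}) \circ (f \otimes g) \circ (\phi_{X,i} \otimes \phi_{Y,j})^{-1}$, combined with the contribution $(-1)^{\ip{i,j}}$ from \eqref{eq:inverse of tensor}. With this unified viewpoint, identity, composition, and the Leibniz rule all reduce to the corresponding facts already established in $\CS \otimes \DS$. The main obstacle is this sign bookkeeping in the $\SB$ step; once it is in hand, the $\AB$ and $\Tw$ cases and the full-faithfulness conclusion require only routine verification.
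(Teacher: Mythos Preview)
Your proposal is correct and follows the same approach as the paper, which simply records the proof as ``Routine.'' You have supplied a careful outline of that routine verification---checking Maurer--Cartan for $\Tw$, distributivity for $\AB$, and using the Koszul-sign interpretation of Remark~\ref{rmk:tensor sign} to organize the $\SB$ case---so there is nothing to add.
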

\begin{proof}
Routine.
\end{proof}


\begin{remark}\label{rmk:Twix and tensor}
Combining all of the above defines $\ast_{\Twix}\colon \Twix(\CS)\otimes \Twix(\DS)\to \Twix(\CS\otimes \DS)$.   In the interest of thoroughness, let us provide explicit formulas.  Suppose we have finite twisted complexes $\XB=\tw_{\boldsymbol{\a}}(\bigoplus_{a\in A} q^{i_a} X_a)$ and $\YB=\tw_{\boldsymbol{\b}}(\bigoplus_{b\in B}q^{j_b} Y_b)$. Then
\[
\XB\ast_{\Twix(\CS)} \YB := \tw_{\a\otimes \id+\id\otimes \b} \Big(\bigoplus_{a,b} q^{i_a+j_b} X_a\otimes Y_b\Big).
\]
On morphisms: if $\mathbf{f}$ is a degree $\mathbf{k}$ morphism from $\tw_{\boldsymbol{\a}}(\bigoplus_{a\in A} q^{i_a} X_a)$ to $\tw_{\boldsymbol{\a}'}(\bigoplus_{a'\in A'} q^{i'_{a'}} X_{a'})$ and $\mathbf{g}$ is a degree $\mathbf{l}$ morphism from $\tw_{\boldsymbol{\b}}(\bigoplus_{b\in B}q^{j_b} Y_b)$ to $\tw_{\boldsymbol{\b}'}(\bigoplus_{b'\in B'}q^{j'_{b'}} Y'_{b'})$ then $\mathbf{f}\otimes \mathbf{g}$ is an $(A'\times B')\times (A\times B)$ matrix of morphisms:
\[
\mathbf{f}\ast \mathbf{g} := \Big\{(-1)^{\ip{\mathbf{k}-i'_{a'}+i_a,j'_{b'}}+\ip{i_a,\mathbf{l}}} f_{a',a}\otimes g_{b',b} \Big\}_{(a',b'),(a,b)}
\]
where we are denoting all morphisms via their bare components $f_{a',a}\in \Hom_{\CS}^{k-i'_{a'}+i_a}(X_a,X_{a'})$ and $g_{b',b}\in \Hom_{\DS}^{l-j'_{b'}+j_b}(Y_b, Y_{b'})$, as in Remark \ref{rmk:components of morphisms}.  In particular, the Maurer-Cartan endomorphism defining $\XB \ast_{\Twix}\YB$ is given by
\[
\boldsymbol{\a}\ast \id + \id\ast \boldsymbol{\b} = \Big\{
 (-1)^{\ip{\iota-i_{a'}+i_a,j_{b'}}} \a_{a',a}\otimes \id_{b',b}
 +(-1)^{\ip{\iota-i_{a'}+i_a,j_{b'}}+\ip{i_a,\iota}} \id_{a',a}\otimes \b_{b',b}
 \Big\}_{(a',b'),(a,b)}
\]
Of course, the components of the identity maps $\id_{a',a}$ and $\id_{b',b}$ are zero unless $a'=a$, respectively $b'=b$.
\end{remark}

\begin{remark}\label{rmk:pretr and tensor}
Continuing Remark \ref{rmk:Twix and tensor}, if $\boldsymbol{\a}$ and $\boldsymbol{\b}$ are strictly lower triangular with respect to some partial order on $A$ and $B$, respectively, then it is clear that $\boldsymbol{\a}\ast \id+\id\ast \boldsymbol{\b}$ is strictly lower triangular with respect to the product partial order $(a',b')\geq (a,b)$  iff $a'\geq a$ and $b'\geq b$, hence $\ast_{\Twix(\CS)}$ restricts to a fully faithful dg functor $\ast_{\pretr}\colon \pretr(\CS)\otimes \pretr(\DS)\to \pretr(\CS\otimes \DS)$.
\end{remark}

\begin{example}\label{ex:tensor of complexes}
Let us continue the Example \ref{ex:pretriangulated hull of additive cat} on the category of complexes over an additive category $\CS$. Set $\Gamma=\Z$ with $\iota =1$ and $\ip{i,j}=ij$ (mod 2), and consider the category of complexes $\Ch^b(\CS)\cong \pretr(\CS)$.  We may express a complex over $\CS$ in our notation as $\tw_{\boldsymbol{\d}}(\bigoplus_{i\in \Z} q^i X^i)$, or in the more traditional way as in \eqref{eq:CX ex}.

For additive categories $\CS,\DS$, the functor $\Ch^b(\CS)\otimes \Ch^b(\DS)\to \Ch^b(\CS\otimes \DS)$ sends a pair of complexes
\[
\XB = \tw_{\boldsymbol{\d}}\Big(\bigoplus_{i\in \Z} q^i X^i \Big)=
\begin{tikzpicture}[baseline=0cm]
\node (a) at (-.5,0)  {$\cdots$};
\node (b) at (1.5,0) {$X^i$};
\node (c) at (3,0) {$X^{i+1}$};
\node (d) at (5,0)  {$\cdots$};
\path[-stealth]
(a) edge node[above] {$\d_{i,i-1}$} (b)
(b) edge node[above] {$\d_{i+1,i}$} (c)
(c) edge node[above] {$\d_{i+2,i+1}$} (d);
\end{tikzpicture}
\]
\[
\YB = \tw_{\boldsymbol{\epsilon}}\Big(\bigoplus_{j\in \Z} q^j Y_j\Big) =
\begin{tikzpicture}[baseline=0cm]
\node (a) at (-.5,0)  {$\cdots$};
\node (b) at (1.5,0) {$Y^j$};
\node (c) at (3,0) {$Y^{j+1}$};
\node (d) at (5,0)  {$\cdots$};
\path[-stealth]
(a) edge node[above] {$\epsilon_{j,j-1}$} (b)
(b) edge node[above] {$\epsilon_{j+1,j}$} (c)
(c) edge node[above] {$\epsilon_{j+2,j+1}$} (d);
\end{tikzpicture}
\]
to $\XB\ast \YB = \tw_{\boldsymbol{\d}\ast \id+\id\ast \boldsymbol{\epsilon}}\Big(\bigoplus_{i,j\in \Z} q^{i+j} X^i\otimes Y^j\Big)$.
The sign rule for the components of $f\ast g$ reduces to
\[
\mathbf{f}\ast \mathbf{g} = \Big\{(-1)^{\ip{i,l}} f_{i',i}\otimes g_{j',j}\Big\}_{(i',j'),(i,j)}
\]
where $i'-i=k$ and $j'-j=l$.  So in particular $\boldsymbol{\d}\ast \id+\id\ast \boldsymbol{\epsilon}$ is a sum of terms $\d_{i+1,i}\otimes \id_{j,j}$ and $(-1)^i \id_{i,i} \otimes \epsilon_{j+1,j}$.  In other words, we can identify $\XB\ast \YB$ as the total complex of a bicomplex:
\[
\XB\ast \YB = \Tot\left(
\begin{tikzpicture}[baseline=1cm]
\node (b0) at (0,-2) {$\vdots$};
\node (c0) at (4,-2) {$\vdots$};
\node (a1) at (-2,0) {$\cdots$};
\node (b1) at (0,0) {$X^i\otimes Y^j$};
\node (c1) at (4,0) {$X^{i+1}\otimes Y^j$};
\node (d1) at (6,0) {$\cdots$};
\node (a2) at (-2,2) {$\cdots$};
\node (b2) at (0,2) {$X^i\otimes Y^{j+1}$};
\node (c2) at (4,2) {$X^{i+1}\otimes Y^{j+1}$};
\node (d2) at (6,2) {$\cdots$};
\node (b3) at (0,4) {$\vdots$};
\node (c3) at (4,4) {$\vdots$};
\path[-stealth]
(a1) edge node[above] {} (b1)
(b1) edge node[above] {$\d_{i+1,i}\otimes \id_{j,j}$} (c1)
(c1) edge node[above] {} (d1)
(a2) edge node[above] {} (b2)
(b2) edge node[above] {} (c2)
(c2) edge node[above] {} (d2)
(b0) edge node[left] {} (b1)
(b1) edge node[left] {$\id_{i,i}\otimes \epsilon_{j+1,j}$} (b2)
(b2) edge node[left] {} (b3)
(c0) edge node[right] {} (c1)
(c1) edge node[right] {} (c2)
(c2) edge node[right] {} (c3);
\end{tikzpicture}
\right)
\]
in which we place the sign $(-1)^i$ on the differentials occuring in the $i$-th column.
\end{example}

\begin{remark}\label{rmk:envelope of multilinear functor}
If we have dg categories $\CS_1,\ldots,\CS_r$ and $\DS$, then iterated applications of Proposition \ref{prop:envelope and tensor} tells us how to lift a multilinear functor $\CS_1\otimes\cdots \otimes \CS_r\to \DS$ to a multilinear functor on the envelopes $\EB(\CS_1)\otimes \cdots \otimes \EB(\CS_r)\to \EB(\DS)$.  Combining with Proposition \ref{prop:envelopes and op} allows us to consider multilinear functors which are contravariant in some arguments as well.

If $\CS_1,\ldots,\CS_r,\DS$ are additive categories, then in view of Example \ref{ex:tensor of complexes}, the extension of multilinear functors to categories of complexes $\Ch^b(\CS_1)\otimes\cdots\otimes \Ch^b(\CS_r)\to \Ch^b(\DS)$ recovers, for example, Bar-Natan's extension of his canopolis formalism \cite{B-N05} to complexes.
\end{remark}

\section{Bimodules and the bar complex}
\label{s:bimodules}

\subsection{Bras and kets}
\label{ss:bras and kets}

\begin{definition}\label{def:bimod}
If $\CS,\DS$ are dg categories, we let $\Bim_{\CS,\DS}$ denote the dg category of $(\CS,\DS)$-bimodules.  An object $M\in\Bim_{\CS,\DS}$ is a collection $\braCket{X}{M}{Y}\in \dgMod{\Gamma}{\k}$ indexed by objects $X\in \Obj(\CS)$, $Y\in \Obj(\DS)$, together with action maps
\begin{equation}\label{eq:bimod}
\braCket{X'}{\CS}{X}\otimes \braCket{X}{M}{Y}\otimes \braCket{Y}{\DS}{Y'}\rightarrow \braCket{X'}{M}{Y'}.
\end{equation}
for all $X,X'\in \Obj(\CS)$ and all $Y,Y'\in \Obj(\DS)$, satisfying the usual associativity and unit axioms.

The \emph{identity bimodule} $\one_\CS\in \Bim_{\CS,\CS}$ (frequently also denoted simply by $\CS$) was introduced implicitly already.  It is defined by  is defined by \eqref{eq:XCY}, with bimodule structure defined by composition of morphisms in $\CS$.
\end{definition}

Now we discuss the notion of tensor product of bimodules $M\in \Bim_{\AS,\BS}$ and $N\in \Bim_{\BS,\CS}$.  This is commonly denoted $M\otimes_\BS N\in \Bim_{\AS,\CS}$.   When the dg category $\BS$ is understood, we will sometimes omit it from the notation, writing $M\star N$, or $M\cdot N$, or $MN$ for the tensor product $M\otimes_\BS N$.  The tensor product is defined by
\[
\braCket{X}{M N}{Z} := \left(\bigoplus_{Y\in \Obj(\BS)} \braCket{X}{M}{Y}\otimes \braCket{Y}{N}{Z}\right)\bigg/ (mf\otimes n \sim m\otimes fn).
\]
for all $X\in \Obj(\AS)$, $Z\in \Obj(\CS)$.

\begin{remark}
Note that we are using the naive, underived tensor product over $\BS$ here.  The derived tensor product will make use of the bar complex of $\BS$, introduced later.
\end{remark}

Some texts define a $(\CS,\DS)$-bimodule to be a functor $\CS\otimes \DS^{\op}\to \dgMod{\Gamma}{\k}$.  The following describes the equivalence between these notions.
\begin{lemma}
Let $F\colon \CS\otimes \DS^{\op} \to \dgMod{\Gamma}{\k}$ be a dg functor.  Define a bimodule $B\in \Bim_{\CS,\DS}$ by
\[
\braCket{X}{B}{Y}:=F(X,Y^{\op}),
\]
with action
\[
\begin{tikzpicture}
\node (a) at (0,0) {$\braCket{X'}{\CS}{X}\otimes \braCket{X}{B}{Y}\otimes \braCket{Y}{\DS}{Y'}$};
\node (b) at (5,0) {$\braCket{X'}{B}{Y'}$};
\node (c) at (0,-.5) {$f\otimes b\otimes g$};
\node (d) at (5,-.5) {$(-1)^{\ip{|b|,|g|}}F(f\otimes g^{\op})(b)$};
\path[-stealth]
(a) edge node[above] {} (b);
\path[|-stealth]
(c) edge node {} (d);
\end{tikzpicture}
\]
This construction defines an isomorphism of dg categories $\Fun(\CS\otimes\DS^{\op},\k)$ and $\Bim_{\CS,\DS}$.
\end{lemma}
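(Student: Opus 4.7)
The plan is to construct an explicit inverse to the assignment $F \mapsto B$ and check that the two constructions are mutually inverse, both on objects (i.e.\ bimodules/functors) and on morphisms (i.e.\ bimodule maps/natural transformations). All the real work is sign bookkeeping.

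First I would define the inverse on objects: given $B \in \Bim_{\CS,\DS}$, I set $F(X, Y^\op) := \braCket{X}{B}{Y}$, and on a morphism $f \otimes g^\op \in \Hom_{\CS \otimes \DS^\op}\bigl((X,Y^\op),(X',Y'^\op)\bigr)$ (where $f \in \braCket{X'}{\CS}{X}$ and $g \in \braCket{Y}{\DS}{Y'}$) I set
\[
F(f \otimes g^\op)(b) := (-1)^{\ip{|b|,|g|}}\, f \cdot b \cdot g,
\]
where $\cdot$ denotes the bimodule action. One sees immediately from the dictionary between $F$ and the action formula in the statement that this is forced, and that the two passages $F \mapsto B \mapsto F'$ and $B \mapsto F \mapsto B'$ return the original data; that verification is purely formal once the sign is in place.

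Next I would check that $F$ so defined is actually a dg functor. Compatibility with identities is trivial. For compatibility with composition, I would compute both sides of
\[
F\bigl((f_1 \otimes g_1^\op) \circ (f_2 \otimes g_2^\op)\bigr)(b) \;\overset{?}{=}\; F(f_1 \otimes g_1^\op)\bigl(F(f_2 \otimes g_2^\op)(b)\bigr).
\]
The left-hand side unpacks using Definition \ref{def:tensor prod} and Definition \ref{def:opposite cat}: composition in $\CS \otimes \DS^\op$ gives a Koszul sign $(-1)^{\ip{|g_1|,|f_2|}}$ from tensor product, and composition in $\DS^\op$ gives an extra $(-1)^{\ip{|g_1|,|g_2|}}$ and reverses the order to $(g_2 g_1)^\op$. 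The right-hand side expands by iterating the single-step formula, producing the two signs $(-1)^{\ip{|b|,|g_1|}}$ and $(-1)^{\ip{|f_2\cdot b \cdot g_1|,|g_2|}}$. Bimodule associativity and linearity of the action let me rewrite everything in terms of $f_1 f_2 \cdot b \cdot g_2 g_1$, and the remaining signs cancel in pairs. A parallel check confirms that $F$ is a chain map, i.e.\ $F(d(f \otimes g^\op)) = d_\k(F(f \otimes g^\op))$; this reduces to the fact that the differential on the bimodule action and the differential on $F$ as a map of internal homs are matched precisely by the sign $(-1)^{\ip{|b|,|g|}}$.

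Finally I would extend the correspondence to morphisms. A morphism $\eta \colon B \to B'$ in $\Bim_{\CS,\DS}$ is a family of maps $\eta_{X,Y} \colon \braCket{X}{B}{Y} \to \braCket{X}{B'}{Y}$ commuting with the actions (with the Koszul sign rule built into what ``commute'' means, since $\eta$ may have nonzero degree). Under the dictionary above this is precisely the data of a natural transformation $\eta \colon F \Rightarrow F'$ as in \S \ref{ss:dg basics}, and the differential on either side agrees by construction. Thus the assignment is fully faithful on hom complexes, and combined with the bijection on objects established above yields the desired isomorphism of dg categories. The main obstacle, as advertised, is corralling the signs in the composition check; the bookkeeping is routine but must be done carefully because signs arise both from $\DS^\op$ and from the tensor product of dg categories.
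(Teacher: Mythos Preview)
Your proposal is correct and follows the standard route. The paper itself states this lemma without proof (it is left as an implicit exercise, with no \texttt{proof} environment following the statement), so your sketch is already more detailed than what the paper provides. One small slip: in your description of the right-hand side of the composition check you have the indices on $g_1,g_2$ swapped (applying $F(f_2\otimes g_2^{\op})$ first produces the sign $(-1)^{\ip{|b|,|g_2|}}$, then $F(f_1\otimes g_1^{\op})$ contributes $(-1)^{\ip{|f_2\cdot b\cdot g_2|,|g_1|}}$), but this does not affect the conclusion since the signs still match.
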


\subsection{Yoneda modules}
\label{ss:yoneda}

We regard $\k$ as a dg category with one object $\ast$ and $\End(\ast)=\k$. A \emph{right (resp.~ left) $\CS$-module} is an object of $\Bim_{\k,\CS}$ (resp.~ $\Bim_{\CS,\k}$).  

We will sometimes denote a right $\CS$-module $M$ by the notation $\bra{M}$ with $\braket{M}{X}:=\braCket{\ast}{M}{X}$ for $X\in \Obj(\CS)$ (and similarly for left $\CS$- modules
%

\begin{definition}\label{def:Yoneda}
For each $X\in \Obj(\CS)$ we have the right and left $\CS$-modules $\bra{X}$ and $\ket{X}$ (called the \emph{Yoneda modules} for $X$).
\end{definition}

\begin{lemma}\label{lemma:yoneda embeddings}
The assignment $X\mapsto \bra{X}$ extends to a dg functor $\RS_\CS\colon\CS\rightarrow\Bim_{\k,\CS}$.  Dually, the assignment  $X\mapsto \ket{X}$ extends to a contravariant dg functor $\LS_\CS\colon \CS \to \Bim_{\CS,\k}$.
\end{lemma}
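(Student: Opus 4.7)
The plan is to build the right $\CS$-module structure on $\bra{X}$ first, then extend the assignment $X\mapsto \bra{X}$ to a dg functor by defining its action on morphisms, and finally verify the dg functor axioms. The dual construction for $\LS_\CS$ follows the same pattern, with a Koszul sign inserted to accommodate contravariance.

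For the module structure on $\bra{X}$, declare $\braket{X}{Y} := \braCket{X}{\CS}{Y}$ with right action $\braket{X}{Y}\otimes \braCket{Y}{\CS}{Y'}\to \braket{X}{Y'}$ given by composition in $\CS$, namely $g\otimes h\mapsto g\circ h$. The bimodule axioms---associativity, unitality, and compatibility with the differential---are immediate from the corresponding properties of composition recorded in \eqref{eq:composition}.

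For the action on morphisms, given $f\in \Hom_\CS(X, X')$ define $\RS_\CS(f)\colon \bra{X}\to \bra{X'}$ componentwise by post-composition, $(\RS_\CS(f))_Y(g) := f\circ g$. This has degree $|f|$ and commutes with the right $\CS$-action by associativity. The dg functor axioms---preservation of identities and composition together with $d_\k(\RS_\CS(f)) = \RS_\CS(d_\CS(f))$---all reduce to identities in $\CS$; the last is the Leibniz rule $d(f\circ g) = d(f)\circ g + (-1)^{\ip{\iota,|f|}} f\circ d(g)$ rearranged using the formula for the differential on internal hom from \S\ref{ss:differentials}.

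For $\LS_\CS$, the value $\ket{X}$ is the left $\CS$-module with $\braket{Y}{X} := \braCket{Y}{\CS}{X}$ and left action by composition. For $f\in \Hom_\CS(X,X')$, viewed as $f^\op\in \Hom_{\CS^\op}((X')^\op, X^\op)$, define $\LS_\CS(f^\op)\colon \ket{X'}\to \ket{X}$ componentwise by $g\mapsto (-1)^{\ip{|g|,|f|}} g\circ f$. The Koszul sign is forced by the opposite composition rule $f^\op\circ g^\op = (-1)^{\ip{|f|,|g|}}(g\circ f)^\op$ from Definition \ref{def:opposite cat}: a short bilinear computation then confirms $\LS_\CS(f^\op\circ g^\op) = \LS_\CS(f^\op)\circ \LS_\CS(g^\op)$, and likewise that $\LS_\CS$ intertwines differentials. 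The main thing to watch is the bookkeeping of signs, but these are all forced with no freedom of choice.
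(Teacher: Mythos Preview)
Your proof is correct and follows exactly the paper's approach: the paper gives the same formulas $\RS_\CS(f)(m)=f\circ m$ and $\LS_\CS(f)(m)=(-1)^{\ip{|m|,|f|}}m\circ f$, then leaves the verification to the reader. You have simply filled in the routine checks that the paper omits.
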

\begin{proof}
The action of $\RS_\CS$ on morphisms is given as follows.  If $f\colon X\to X'$ in $\CS$ then the module map $\RS_\CS(f)\colon \bra{X}\to \bra{X'}$ is defined by
\[
\begin{tikzpicture}
\node (a) at (0,0) {$\braket{X}{Y}$};
\node (b) at (3,0) {$\braket{X'}{Y}$};
\node (c) at (0,-.5) {$m$};
\node (d) at (3,-.5) {$f\circ m$};
\path[-stealth]
(a) edge node[above] {$\RS_\CS(f)$} (b);
\path[|-stealth]
(c) edge node {} (d);
\end{tikzpicture}
\]

Now, let $\LS_\CS\colon \CS \rightarrow \Bim_{\CS,\k}$ denote the contravariant dg functor defined by $\LS_{\CS}(X):= \ket{X}$, with the action on morphisms given as follows.  If $f\colon X\to X'$ in $\CS$ then $\LS_\CS(f)\colon \ket{X'}\to \ket{X}$ is defined by 
\[
\begin{tikzpicture}
\node (a) at (0,0) {$\braket{Y}{X'}$};
\node (b) at (3,0) {$\braket{Y}{X}$};
\node (c) at (0,-.5) {$m$};
\node (d) at (3,-.5) {$(-1)^{\ip{|m|,|f|}} m\circ f$};
\path[-stealth]
(a) edge node[above] {$\LS_\CS(f)$} (b);
\path[|-stealth]
(c) edge node {} (d);
\end{tikzpicture}
\]
We leave it to the reader to verify that $\RS_\CS$ and $\LS_{\CS}$ are dg functors (with $\LS_{\CS}$ contravariant).
\end{proof}

\begin{corollary}
If $Y \cong \tw_\a(\bigoplus_{a\in A} q^{i_a} X_a)$ in $\CS$ then the Yoneda modules $\bra{Y}$ and $\ket{Y}$ satisfy
\[
\bra{Y}\cong \tw_{\bra{\a}}\left(\bigoplus_{a\in A} q^{i_a} \bra{X_a}\right) \, ,  \qquad \quad \ket{Y}\cong \tw_{-\ket{\a}}\left(\bigoplus_{a\in A} q^{-i_a}\ket{X_a}\right).
\]
\end{corollary}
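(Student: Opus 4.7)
The plan is to deduce both isomorphisms by functoriality, using that the Yoneda constructions $\RS_\CS$ and $\LS_\CS$ are dg functors (Lemma on Yoneda embeddings) together with the fact, repeatedly noted throughout \S\ref{s:closures}, that any dg functor commutes with shifts, finite direct sums, and twists, hence with finite twisted complexes.

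For the right Yoneda, $\RS_\CS$ is covariant, so applying it termwise to the twisted complex expression $Y \cong \tw_\a(\bigoplus_{a\in A} q^{i_a} X_a)$ immediately yields
\[
\bra{Y} = \RS_\CS(Y) \cong \tw_{\RS_\CS(\a)}\Big(\bigoplus_{a\in A} q^{i_a} \RS_\CS(X_a)\Big) = \tw_{\bra{\a}}\Big(\bigoplus_{a\in A} q^{i_a} \bra{X_a}\Big),
\]
under the convention that $\bra{\a}$ denotes the matrix of Yoneda maps $\RS_\CS(\a_{a,b})$. Here there is no obstacle: the equational characterization of twisted complexes given in \S\ref{ss:twisted complexes} is preserved by any dg functor.

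For the left Yoneda, the functor $\LS_\CS$ is contravariant, so we need to route the computation through Proposition~\ref{prop:envelopes and op}, which gives an isomorphism $\Twix(\CS)^{\op} \cong \Twix(\CS^{\op})$. First, I would view $\LS_\CS$ as a covariant dg functor $\CS^{\op} \to \Bim_{\CS,\k}$ and extend it to $\Twix(\CS^{\op})\to \Bim_{\CS,\k}$ by the general envelope-functoriality. Then $\ket{Y} = \LS_\CS(Y)$ is computed by first transporting $Y^{\op}$ through $\Twix(\CS)^{\op} \cong \Twix(\CS^{\op})$ using the explicit formulas \eqref{eq:opposite sign a}, which convert $Y^\op$ into $\tw_{\boldsymbol{\a}'}\big(\bigoplus_a q^{-i_a} X_a^{\op}\big)$ with $\a'_{b,a} = (-1)^{1+\ip{i_a+i_b,\iota}+i_b}\a_{a,b}$, and then applying $\LS_\CS$ termwise. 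This yields
\[
\ket{Y} \cong \tw_{\LS_\CS(\boldsymbol{\a}')}\Big(\bigoplus_{a\in A} q^{-i_a}\ket{X_a}\Big),
\]
and the task reduces to identifying $\LS_\CS(\boldsymbol{\a}')$ with $-\ket{\a}$ as stated.

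The main obstacle is precisely this last sign reconciliation. The notation $\ket{\a}$ should be interpreted as the matrix whose $(b,a)$-entry is the Yoneda map $\LS_\CS(\a_{a,b})$ carrying the Koszul sign appropriate to the shift grading; once that convention is unpacked (consistent with the sign rule in the definition of $\LS_\CS$ on morphisms, $m\mapsto (-1)^{\ip{|m|,|f|}}m\circ f$), the factor $(-1)^{1+\ip{i_a+i_b,\iota}+i_b}$ coming from Proposition~\ref{prop:envelopes and op} matches $-1$ times the Koszul convention inside $\ket{\a}$. Everything else is bookkeeping parallel to the covariant case. It may be cleaner in the actual write-up to simply \emph{define} $\ket{\a}$ to be $-\LS_\CS(\boldsymbol{\a}')$ (with $\boldsymbol{\a}'$ as above), in which case the statement becomes a tautology and the only content is the existence of the contravariant extension of $\LS_\CS$ to $\Twix(\CS)$ provided by Proposition~\ref{prop:envelopes and op}.
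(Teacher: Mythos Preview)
Your proposal is correct and follows the same approach as the paper: both arguments reduce the corollary to the fact that $\RS_\CS$ is a dg functor and $\LS_\CS$ a contravariant dg functor, hence each commutes with shifts, finite sums, and twists. The paper's proof is a single sentence to this effect; your version is more explicit, in particular in routing the contravariant case through Proposition~\ref{prop:envelopes and op} and flagging the sign bookkeeping behind the symbol $\ket{\a}$, which the paper leaves implicit.
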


\begin{proof}
The statement follows from the fact that $\bar{-}$ (resp.~$\ket{-}$) is a dg functor (resp.~contravariant dg functor), hence commutes with sums, shifts, and twists.
\end{proof}

The following is straightforward.
\begin{lemma}\label{lemma:yoneda}
Given $B\in \Bim_{\CS,\DS}$ and $X\in \Obj(\CS)$, $Y\in \Obj(\DS)$, we have
\[
\bra{X}\otimes_\CS B\otimes_\DS \ket{Y} \cong \braCket{X}{B}{Y} \cong \Hom_{\Bim_{\CS,\DS}}\Big(\ket{X}\otimes_\k \bra{Y} , B\Big)
\]
naturally in $X,Y$.\qed
\end{lemma}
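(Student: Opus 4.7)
The plan is to verify both isomorphisms via the standard ``freeness'' of Yoneda modules.

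For the first isomorphism, I would observe that the right $\CS$-module $\bra{X}$ is freely generated by the identity element $\id_X\in\braket{X}{X}$. Concretely, define a map
\[
\mu_{M}\colon \bra{X}\otimes_\CS M\rightarrow \braCket{X}{M}{Z},\qquad f\otimes m\longmapsto f\cdot m,
\]
for any left $\DS$-module $M$ (where we take $M = B\otimes_\DS \ket{Y}$ or first consider $M=B$ as a $(\k,\DS)$-bimodule by restriction). This is a chain map because the left $\CS$-action on $M$ is compatible with the differential, and it descends to the quotient defining $\otimes_\CS$ by the very relation $f'\circ f\otimes m = f'\otimes f\cdot m$. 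An inverse is given by $m\mapsto \id_X\otimes m$, and the two composites are the identity by the unit axiom of the bimodule. Applying this twice (once for $\bra{X}\otimes_\CS(-)$ and once, symmetrically, for $(-)\otimes_\DS \ket{Y}$, which is freely generated by $\id_Y$) yields
\[
\bra{X}\otimes_\CS B\otimes_\DS \ket{Y}\xrightarrow{\ \cong\ }\braCket{X}{B}{Y},\qquad f\otimes b\otimes g\longmapsto f\cdot b\cdot g.
\]

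For the second isomorphism, the external tensor product $\ket{X}\otimes_\k\bra{Y}$ has components $\braCket{X'}{\CS}{X}\otimes \braCket{Y}{\DS}{Y'}$, and is freely generated as a bimodule by the element $\id_X\otimes \id_Y$. Define
\[
\Phi\colon \Hom_{\Bim_{\CS,\DS}}\!\big(\ket{X}\otimes_\k\bra{Y},B\big)\longrightarrow \braCket{X}{B}{Y},\qquad \a\longmapsto \a_{X,Y}(\id_X\otimes \id_Y),
\]
and define an inverse $\Psi(b)$ by the formula $\Psi(b)_{X',Y'}(f\otimes g):=f\cdot b\cdot g$, with an appropriate Koszul sign coming from the bimodule convention (specifically the sign $(-1)^{\ip{|b|,|g|}}$ that appears in the right action). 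That $\Phi\Psi=\id$ is immediate from the unit axioms, and $\Psi\Phi=\id$ follows because any bimodule morphism $\a$ satisfies $\a_{X',Y'}(f\otimes g)=f\cdot\a_{X,Y}(\id_X\otimes\id_Y)\cdot g$ (with the same sign) by $(\CS,\DS)$-linearity. Compatibility of $\Phi$ with the differential is a direct consequence of $d(\id_X)=0=d(\id_Y)$ together with the fact that $\a$ is closed iff its action commutes with $d$.

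The only mildly delicate step is bookkeeping the Koszul sign appearing in the right $\DS$-action formula of Definition \ref{def:bimod}, which must be tracked consistently in the inverse $\Psi$; once this is done, naturality in $X$ and $Y$ is automatic since both sides are manifestly functorial in $\ket{X}$ and $\bra{Y}$ and the constructed isomorphisms are defined via the canonical generators $\id_X,\id_Y$, which are functorial under the Yoneda embeddings $\LS_\CS$ and $\RS_\DS$ of Lemma \ref{lemma:yoneda embeddings}.
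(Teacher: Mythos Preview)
Your argument is correct and is the standard Yoneda-type proof. The paper does not actually supply a proof for this lemma: it is stated with a terminal \verb|\qed| and left as a routine fact, so there is nothing to compare against beyond noting that your write-up fills in exactly the details the paper omits. One small wording quibble: when you say ``$\a$ is closed iff its action commutes with $d$,'' what you actually need (and use) is that for arbitrary $\a$ one has $d(\a_{X,Y}(\id_X\otimes\id_Y))=(d\a)_{X,Y}(\id_X\otimes\id_Y)$ because $d(\id_X\otimes\id_Y)=0$; this is what makes $\Phi$ a chain map on the full hom complex, not just on closed elements.
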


%

\subsection{Bar complex}
\label{ss:bar complex}

\begin{definition}\label{def:bar cx}
Let $\CS$ be a dg category.  The \emph{2-sided bar complex of $\CS$} is the bimodule $\Bar(\CS)\in \Bim_{\CS,\CS}$ defined by
\[
\Bar(\CS):=\tw_\d\left(\bigoplus_{r\geq 0} q^{-r\iota} \bigoplus_{X_0,X_1,\ldots,X_r\in \Obj(\CS)} \ket{X_0}\otimes \braket{X_0}{X_1}\otimes\cdots\otimes \braket{X_{r-1}}{X_r}\otimes \bra{X_r}\right)
\]
where the twist $\d$ is the usual bar differential, which will be described in a moment.  Simple tensors in the bar complex will be denoted by $[f_0,\ldots,f_{r+1}]:=f_0\otimes\cdots \otimes f_{r+1}$, and the bimodule structure is given by
\[
g\cdot[f_0,f_1,\ldots,f_r,f_{r+1}]\cdot g' = (-1)^{r\ip{\iota,|g|}}[g\circ f_0,f_1,\ldots,f_r,f_{r+1}\circ g'],
\]
and the differential $d([f_0,\ldots,f_{r+1}])$ is a sum of terms of two types:
\begin{enumerate}
\item internal differential applied to $i$-th term: $(-1)^{r+k_0+\cdots+k_{i-1}} [f_0,\ldots,d(f_i),\ldots,f_{r+1}]$ ($1\leq i\leq r+1$),
\item bar differential $\d$ applied to $(i,i+1)$ terms: $(-1)^i [f_0,\ldots,f_i\circ f_{i+1},\ldots,f_{r+1}]$ ($0\leq i\leq r$),
\end{enumerate}
where $k_i=\ip{\iota,|f_i|}$ for $0\leq i\leq r+1$.  The \emph{counit} of $\Bar(\CS)$ is the bimodule map $\e\colon \Bar(\CS)\rightarrow \CS$ defined by
\[
\braCket{Y}{\Bar(\CS)}{Y'}\rightarrow \braCket{Y}{\CS}{Y'}  \colon  \begin{cases} [f_0,f_1]\mapsto f_0\circ f_1  & \\ [f_0,\ldots,f_{r+1}]\mapsto 0 &\text{ if }r>0\end{cases}
\]
\end{definition}

We will also use an alternate sign convention for the bar complex of $\CS$.

\begin{definition}\label{def:bar complex 2}
Given an element $[f_0,\ldots,f_{r+1}]\in \Bar(\CS)(Y,Y')$ with $k_u:=\ip{\iota,f_u}$ for $0\leq u\leq r+1$, let
\begin{equation}\label{eq:bar complex 2}
[f_0,\ldots,f_{r+1}]' := (-1)^{\sigma(k_0,\ldots,k_r)} [f_0,\ldots,f_{r+1}]
\end{equation}
where $\sigma(k_0,\ldots,k_{r+1})=\sum_{u=0}^r (r+1-i)k_u$. 
\end{definition}

\begin{remark}
One can think of the sign convention \eqref{eq:bar complex 2} in the following way.  First, in the standard bar complex one should imagine that there are $r$ copies of the shift $q^{\iota}$ sitting to the left of $[f_0,\ldots,f_{r+1}]$, as in $[f_0,\ldots,f_{r+1}]=\textcolor{black}{q^{r\iota}}f_0\otimes \cdots \otimes f_{r+1}]$.  In the alternate, ``primed'', notation we think of the shifts $q^\iota$ as distributed as in $[f_0,\ldots,f_{r+1}]'=\textcolor{black}{q^{-\iota}} f_0\otimes \textcolor{black}{q^\iota} f_1\otimes\cdots \otimes \textcolor{black}{q^\iota} f_{r+1}$.
\end{remark}
The following is straightforward:

\begin{lemma}\label{lemma:bar complex 2}
In terms of \eqref{eq:bar complex 2} the bimodule structure is given by
\[
g\cdot [f_0,f_1,\ldots,f_r,f_{r+1}]'\cdot g' = (-1)^{\ip{\iota,|g|}} [g\circ f_0,f_1,\ldots,f_r,f_{r+1}\circ g']'
\]
and the differential $d([f_0,\ldots,f_{r+1}]')$ is a sum of terms of two types:
\begin{enumerate}
\item $(-1)^{u+k_0+\cdots+k_{u-1}+1} [f_0,\ldots,d(f_u),\ldots,f_{r+1}]'$ for $1\leq u\leq r+1$,
\item $(-1)^{u+k_0+\cdots+k_u} [f_0,\ldots,f_u\circ f_{u+1},\ldots,f_{r+1}]'$ for $0\leq u\leq r$.
\end{enumerate}
\end{lemma}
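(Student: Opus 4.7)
The proof is a direct bookkeeping exercise: the primed and unprimed conventions differ by the scalar $(-1)^{\sigma(k_0,\ldots,k_r)}$, so I would verify each claimed formula by tracking how $\sigma$ changes under the relevant operation (left action, right action, internal differential applied at position $u$, composition at position $u$) and comparing to the original signs from Definition~\ref{def:bar cx}.

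First, for the bimodule action, I would observe that replacing $f_0$ by $g\circ f_0$ increases $k_0$ by $\ip{\iota,|g|}$, and hence increases $\sigma = \sum_{u=0}^r (r+1-u) k_u$ by $(r+1)\ip{\iota,|g|}$. Combining with the original left-action sign $(-1)^{r\ip{\iota,|g|}}$ yields
\[
(-1)^{r\ip{\iota,|g|} - (r+1)\ip{\iota,|g|}} = (-1)^{\ip{\iota,|g|}},
\]
exactly as claimed. For the right action, only $k_{r+1}$ is affected by replacing $f_{r+1}$ with $f_{r+1}\circ g'$; since $\sigma$ does not depend on $k_{r+1}$, the primed and unprimed right actions agree without any sign correction.

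Second, for the internal differential (type (1)): since $|d(f_u)|=|f_u|+\iota$ and $\ip{\iota,\iota}=1$, replacing $f_u$ by $d(f_u)$ changes $k_u$ to $k_u+1$. For $1\leq u\leq r$ this shifts $\sigma$ by $r+1-u$, while for $u=r+1$ it leaves $\sigma$ unchanged. In each case, multiplying the original sign $(-1)^{r+k_0+\cdots+k_{u-1}}$ by the parity correction $(-1)^{r+1-u}$ (or by $1$ when $u=r+1$) collapses to $(-1)^{u+k_0+\cdots+k_{u-1}+1}$, which is the desired expression.

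Third, for the composition differential (type (2)), the term $[f_0,\ldots,f_u\circ f_{u+1},\ldots,f_{r+1}]$ has length $r+1$, so its $\sigma$ is computed with $r-1$ in place of $r$ and with the new entry $k_u+k_{u+1}$ in slot $u$. A short calculation comparing
\[
\sigma_{\mathrm{new}}=\sum_{i<u}(r-i)k_i + (r-u)(k_u+k_{u+1}) + \sum_{j>u+1}(r+1-j)k_j
\]
to the original $\sigma=\sum_{i=0}^r(r+1-i)k_i$ gives $\sigma_{\mathrm{new}} = \sigma - (k_0+\cdots+k_u)$. Combining this with the original composition sign $(-1)^u$ then gives $(-1)^{u+k_0+\cdots+k_u}$, matching the claim. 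The main (mild) obstacle is just this last step: one must keep track of the fact that composing two adjacent entries reduces the length of the tuple by one, which shifts the reindexing in $\sigma$ and forces one to pay attention to which terms contribute; everything else is an immediate consequence of how $\sigma$ is defined.
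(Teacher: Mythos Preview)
Your proof is correct and is exactly the approach taken in the paper: the paper's proof consists solely of recording the two identities $\sigma(\ldots,k_u+1,\ldots) = (r+1-u)+\sigma(\ldots,k_u,\ldots)$ and $\sigma(\ldots,k_u+k_{u+1},\ldots) = -(k_0+\cdots+k_u)+\sigma(\ldots,k_u,k_{u+1},\ldots)$, which are precisely the shifts you compute. Your write-up is in fact more thorough, since you also spell out the bimodule action and the boundary case $u=r+1$, which the paper leaves implicit.
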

\begin{proof}
This is a consequence of the following:
\[
\sigma(\ldots,k_u+1,\ldots) = (r+1-u)+\sigma(\ldots,k_u,\ldots)
\]
\begin{align*}
\sigma(\ldots,k_u+k_{u+1},\ldots)
&=\sum_{v=0}^u (r-v)k_v + \sum_{v=u+1}^r (r+1-v)k_v\\
&=-(k_0+\cdots+k_u)+\sigma(\ldots,k_u,k_{u+1},\ldots)
\end{align*}
\end{proof}

\subsection{Relative bar complex}
\label{ss:relative bar}
The direct sum occuring in the definition of $\Bar(\DS)$ is quite large, and can be tamed a bit by working with the bar complex of $\DS$ relative to a chosen subset of objects of $\DS$, defined next.

\begin{definition}\label{def:relative bar cx}
Let $\DS$ be a dg category, and let $\XS\subset \Obj(\DS)$.  The \emph{2-sided bar complex of $\DS$ relative to $\XS$} is the bimodule $\Bar(\DS,\XS)\in \Bim_{\DS,\DS}$ defined by
\[
\Bar(\DS,\XS):=\tw_\d\left(\bigoplus_{r\geq 0} q^{-r\iota} \bigoplus_{X_0,X_1,\ldots,X_r\in \Obj(\DS)} \ket{X_0}\otimes \braket{X_0}{X_1}\otimes\cdots\otimes \braket{X_{r-1}}{X_r}\otimes \bra{X_r}\right)
\]
where the twist $\d$ and the bimodule structure are as in Definition \ref{def:bar cx}.
\end{definition}
The following is an easy observation.

\begin{lemma}
Let $\CS\subset \DS$ be a full dg subcategory, and let $\XS=\Obj(\CS)$.  Then there is an isomorphism of bimodules
\[
\DS\otimes_\CS \Bar(\CS)\otimes_\CS\DS \to \Bar(\DS,\XS)
\]
sending $g\otimes [f_0,\ldots,f_{r+1}]\otimes g' \cdot [f_0,\ldots,f_{r+1}]$.\qed
\end{lemma}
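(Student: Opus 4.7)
The plan is to exhibit explicit mutually inverse dg $\DS$-bimodule maps. Define $\Phi\colon \DS\otimes_\CS\Bar(\CS)\otimes_\CS\DS\to \Bar(\DS,\XS)$ by
\[
\Phi(g\otimes[f_0,\ldots,f_{r+1}]\otimes g') := g\cdot[f_0,\ldots,f_{r+1}]\cdot g' = (-1)^{r\langle\iota,|g|\rangle}[g\circ f_0, f_1,\ldots, f_r, f_{r+1}\circ g'],
\]
where on the right we first view $[f_0,\ldots,f_{r+1}]$ as an element of $\Bar(\DS,\XS)$ (legitimate since all its internal objects lie in $\XS$) and then apply the $\DS$-bimodule action from Definition \ref{def:bar cx}. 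The candidate inverse is
\[
\Psi([h_0,h_1,\ldots,h_r,h_{r+1}]):=(-1)^{r\langle\iota,|h_0|\rangle}\, h_0\otimes[\id_{X_0},h_1,\ldots,h_r,\id_{X_r}]\otimes h_{r+1},
\]
where the constraint that the internal objects $X_0,\ldots,X_r$ lie in $\XS=\Obj(\CS)$ forces $h_1,\ldots,h_r$ to be morphisms in $\CS$; only the outer morphisms $h_0, h_{r+1}$ remain in $\DS$.

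First I would check that $\Phi$ respects the $\otimes_\CS$-relations. For $\eta\in\CS$, passing $\eta$ across the left tensor produces a factor $(-1)^{r\langle\iota,|\eta|\rangle}$ from the left $\CS$-action on $\Bar(\CS)$, and this is exactly matched by the change from $(-1)^{r\langle\iota,|g|\rangle}$ to $(-1)^{r\langle\iota,|g\circ\eta|\rangle}$ in the prefactor of $\Phi$; the analogous check on the right tensor is easier (the right $\CS$-action on $\Bar(\CS)$ carries no sign). That $\Phi$ is $\DS$-bilinear is tautological, as $\Phi$ is literally the bimodule action map. To see $\Phi$ is a chain map I would compare differentials term by term: the internal differentials $d(f_i)$ for $1\le i\le r$ and the bar contractions $[\ldots,f_i\circ f_{i+1},\ldots]$ for $1\le i\le r-1$ pass across $\Phi$ identically. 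The boundary terms --- internal differentials of $f_0$ and $f_{r+1}$ and the endpoint contractions $[\ldots,f_0\circ f_1,\ldots]$ and $[\ldots,f_r\circ f_{r+1},\ldots]$ in the source --- reassemble on the target side into the internal differentials of $g\circ f_0$ and $f_{r+1}\circ g'$ via the Leibniz rule, together with the corresponding endpoint contractions in $\Bar(\DS,\XS)$.

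Finally, $\Phi\circ\Psi=\id$ follows immediately, as the two $(-1)^{r\langle\iota,|h_0|\rangle}$-factors from $\Psi$ and $\Phi$ cancel; and $\Psi\circ\Phi$ applied to $g\otimes[f_0,\ldots,f_{r+1}]\otimes g'$ reduces, via two applications of the $\otimes_\CS$-relations that absorb $f_0$ back from the middle factor into $g$ and $f_{r+1}$ back into $g'$, to the original element (with all signs canceling). The main obstacle is the sign-bookkeeping in the chain map verification, but because $\Bar(\CS)$ and $\Bar(\DS,\XS)$ are defined by literally the same formulas --- only the enveloping category of morphisms changes --- no genuine discrepancy arises.
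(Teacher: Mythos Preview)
Your proof is correct and supplies exactly the details that the paper omits: the paper states this lemma as an ``easy observation'' and places the \qed\ directly in the statement, giving no argument at all. Your explicit inverse $\Psi$ and the sign checks (which reduce, via \eqref{eq:gfffg relation}, to the cancellation $(-1)^{r\ip{\iota,|g|}+r\ip{\iota,|g\circ f_0|}+r\ip{\iota,|f_0|}}=1$) are the content the paper leaves to the reader.
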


We will set up some notation for relative bar complexes that will be used throughout.  Given a dg category $\DS$ and a full subcategory $\CS\subset \DS$, we will use bold font to denote objects and morphisms in $\DS$ and standard font to denote objects and morphisms in $\CS$.  A typical simple tensor in $\Bar(\DS)$ will denoted by $[\mathbf{g},\mathbf{f}_1,\ldots,\mathbf{f}_r,\mathbf{g}']$ corresponding to a sequence of composable morphisms
\begin{equation}\label{eq:sequenceBold}
\begin{tikzpicture}
\node (a) at (0,0) {$\mathbf{Y}$};
\node (b) at (2,0) {$\mathbf{X}_0$};
\node (c) at (4,0) {$\cdots$};
\node (d) at (6,0) {$\mathbf{X}_r$};
\node (e) at (8,0) {$\mathbf{Y}'$};
\path[-stealth]
(b) edge node[above] {$\mathbf{g}$} (a)
(c) edge node[above] {$\mathbf{f}_1$} (b)
(d) edge node[above] {$\mathbf{f}_{r}$} (c)
(e) edge node[above] {$\mathbf{g}'$} (d);
\end{tikzpicture}
\end{equation}
When we wish to treat the extremal and internal terms on equal footing, we will also write
\begin{equation}\label{eq:extremal terms}
\XB_{-1}:=\YB\, , \qquad \XB_{r+1}:=\YB' \, , \qquad \mathbf{f}_0:=\mathbf{g}\, , \qquad  \mathbf{f}_{r+1}:=\mathbf{g}',
\end{equation}
A typical simple tensor in $\DS\otimes_\CS\Bar(\CS)\otimes_\CS \DS$ will be denoted by $\mathbf{g}\cdot [f_0,\ldots, f_{r+1}]\cdot \mathbf{g}'$, corresponding to a sequence of composable morphisms
\begin{equation}\label{eq:sequence in FCF}
\begin{tikzpicture}[anchorbase]
\node (a) at (0,0) {$\mathbf{Y}$};
\node (b) at (2,0) {${X}_{-1}$};
\node (c) at (4,0) {${X}_0$};
\node (d) at (6,0) {$\cdots$};
\node (e) at (8,0) {${X}_r$};
\node (f) at (10,0) {${X}_{r+1}$};
\node (g) at (12,0) {$\mathbf{Y}'$};
\path[-stealth]
(b) edge node[above] {$\mathbf{g}$} (a)
(c) edge node[above] {${f}_0$} (b)
(d) edge node[above] {${f}_1$} (c)
(e) edge node[above] {${f}_{r}$} (d)
(f) edge node[above] {${f}_{r+1}$} (e)
(g) edge node[above] {$\mathbf{g}'$} (f);
\end{tikzpicture}
\end{equation}
where the $f_i$ are morphisms in $\CS$.  Note that tensoring over $\CS$ means that the generators $\mathbf{g}\cdot [f_0,\ldots, f_{r+1}]\cdot \mathbf{g}'$ are subject to the relations
\begin{equation}\label{eq:gfffg relation}
\mathbf{g}\cdot [f_0,\ldots, f_{r+1}]\cdot \mathbf{g}' = (-1)^{r\ip{\iota,|f_0|}} (\mathbf{g}\circ f_0)\cdot [\id_{X_0},f_1,\ldots,f_r,\id_{X_r}]\cdot (f_{r+1}\circ \mathbf{g}')
\end{equation}

The sign comes from the twisting of the left $\CS$-action which arises from the shift $q^{-r\iota}$, which is applied to $[f_0,\ldots,f_{r+1}]$ when forming the bar complex of $\CS$.

\begin{definition}\label{def:generation}
Let $\XS\subset \Obj(\DS)$ be subset of objects.  We say that $\XS$ \emph{generates} $\DS$ if every object of $\DS$ is homotopy equivalent to a finite one-sided twisted complex constructed from objects in $\XS$.
\end{definition}

\begin{theorem}\label{thm:bar and generation}
If $\XS\subset \Obj(\DS)$ is a set of objects which generates $\DS$, then $\Bar(\DS)\simeq \Bar(\DS,\XS)$.  In particular, if $\CS$ is a dg category and $\EB$ is one of the envelope operations $\EB\in \{\SB,\AB,\pretr\}$ then we have
\begin{equation}\label{eq:bar of envelop equiv}
\Bar(\EB(\CS)) \simeq \Bar(\EB(\CS), \Obj(\CS)).
\end{equation}
\end{theorem}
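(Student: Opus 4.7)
The plan is to reduce the statement to the uniqueness theorem for counital idempotents from Appendix~\ref{s:counital idempotents}. There is an obvious inclusion $\iota \colon \Bar(\DS, \XS) \hookrightarrow \Bar(\DS)$ of $(\DS,\DS)$-bimodules obtained by summing only over those sequences whose internal objects lie in $\XS$, and this inclusion intertwines the two counits $\e_\XS \colon \Bar(\DS, \XS) \to \one_\DS$ and $\e \colon \Bar(\DS) \to \one_\DS$. I will show that both counits define counital idempotents in $\Bim_{\DS, \DS}$ with coinciding essential images; the uniqueness statement in the appendix then forces $\iota$ to be a homotopy equivalence of bimodules.

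For $\Bar(\DS)$ the counital idempotent property is standard. The extra-degeneracy formula
\[
h\bigl([\mathbf{f}_0, \ldots, \mathbf{f}_{r+1}]\bigr) = \pm\, [\id_{\YB}, \mathbf{f}_0, \ldots, \mathbf{f}_{r+1}]
\]
is a right $\DS$-linear contracting homotopy exhibiting $\e$ as a deformation retract of right $\DS$-modules, so $\e \otimes_\DS \id_M \colon \Bar(\DS) \otimes_\DS M \to M$ is a homotopy equivalence for every right $\DS$-module $M$. Hence the essential image of $(\Bar(\DS), \e)$ is all of $\Bim_{\DS, \DS}$.

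For $\Bar(\DS, \XS)$ the analogous extra-degeneracy contraction lands inside the relative bar complex only when the new extremal vertex belongs to $\XS$. So I first observe that for each $X \in \XS$ the same formula gives a contracting homotopy for the counit $\bra{X} \otimes_\DS \Bar(\DS, \XS) \to \bra{X}$: inserting $\id_X$ at the left introduces a new internal object $X \in \XS$, as required. I then bootstrap to arbitrary $\YB \in \DS$ using the generation hypothesis: choose a finite one-sided twisted complex $T = \tw_\a\bigl(\bigoplus_a q^{i_a} X_a\bigr)$ of objects in $\XS$ with $T \simeq \YB$. Since the Yoneda embedding is a dg functor, the corollary to Lemma~\ref{lemma:yoneda embeddings} yields $\bra{\YB} \simeq \bra{T} \cong \tw_{\bra{\a}}\bigl(\bigoplus_a q^{i_a} \bra{X_a}\bigr)$. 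Because tensoring with the bimodule $\Bar(\DS, \XS)$ commutes with shifts, finite direct sums, and twists, the case $X \in \XS$ propagates to give $\bra{\YB} \otimes_\DS \Bar(\DS, \XS) \simeq \bra{\YB}$, so every Yoneda right module lies in the essential image of $(\Bar(\DS, \XS), \e_\XS)$, and symmetrically for left Yoneda modules. Appealing to the density of Yoneda modules from Appendix~\ref{s:counital idempotents}, this essential image is all of $\Bim_{\DS, \DS}$, matching that of $(\Bar(\DS), \e)$.

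The main statement then follows from the uniqueness of counital idempotents in the appendix, and the corollary \eqref{eq:bar of envelop equiv} is immediate: for each $\EB \in \{\SB, \AB, \pretr\}$, every object of $\EB(\CS)$ is literally a shift, finite direct sum, or one-sided twisted complex of objects of $\CS$, so $\Obj(\CS)$ generates $\EB(\CS)$ in the sense of Definition~\ref{def:generation}. The main obstacle is the density step in the third paragraph: passing from counital-idempotence on all Yoneda modules to counital-idempotence on all bimodules is precisely the content of the appendix machinery on counital idempotents, which encodes how such idempotents are determined by their action on a generating family of modules.
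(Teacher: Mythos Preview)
Your argument is essentially the same as the paper's: both show that $\Bar(\DS,\XS)$ satisfies the characterizing property of $\Bar(\DS)$ by bootstrapping the extra-degeneracy contraction from $X\in\XS$ to arbitrary $\YB\in\DS$ via the twisted-complex expression for $\bra{\YB}$, and both then invoke uniqueness of counital idempotents. The paper packages this via Definition~\ref{def:bar axioms} and Lemma~\ref{lemma:characterization of bar} (properties $I_\XS$ and $K_\XS$), while you go directly to the appendix, but the content is identical.

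One phrasing issue: your claim that the essential image of either idempotent is \emph{all} of $\Bim_{\DS,\DS}$ is not correct as stated (for instance, the identity bimodule $\one_\DS$ itself is not in $\im(\Bar(\DS))$ unless $\DS$ is self-projective), and the appendix contains no ``density of Yoneda modules'' statement. What you actually need, and what your Yoneda computation actually gives, is the weaker statement $\Bar(\DS)\in\im(\Bar(\DS,\XS))$: since $\Bar(\DS)$ is a one-sided twisted complex of bimodules $\ket{X_0}\otimes M\otimes\bra{X_r}$, and you have shown $\bra{X_r}\otimes_\DS\Bar(\DS,\XS)\simeq\bra{X_r}$ for each $X_r\in\Obj(\DS)$, it follows that $\Bar(\DS)\otimes_\DS\Bar(\DS,\XS)\simeq\Bar(\DS)$, i.e.\ $\Bar(\DS)\leq\Bar(\DS,\XS)$ in the sense of Theorem~\ref{thm:counital idempotent order}. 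Together with $\Bar(\DS,\XS)\leq\Bar(\DS)$ from your inclusion $\iota$, this is what forces the equivalence. Drop the density detour and state this directly.
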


The proof will utilize the following unique characterization of $\Bar(\DS,\XS)$.  To state it, first recall the counit map $\e\colon \Bar(\DS)\to \DS$.   Now $\Bar(\DS,\XS)$ is a sub-bimodule of $\Bar(\DS)$, and inherits a counit map $\e'\colon \Bar(\DS,\Obj(\CS))\to \DS$ by restriction.

\begin{definition}\label{def:bar axioms}
Let $\DS$ be a dg category with $\XS\subset \Obj(\DS)$ a set of objects.  Let $(B,\e_B)$ be a pair in which $B$ is a $(\DS,\DS)$-bimodule and $\e_B\colon B\to \one_\DS$ is a bimodule map.  We say that the pair $(B,\e_B)$ satisfies property $I_{\XS}$ (resp.~$K_{\XS}$) if:
\begin{itemize}
\item ($I_{\XS}$) $B$ is a one-sided twisted complex constructed from bimodules $\ket{X_0}\otimes M\otimes \bra{X_r}$ where $X_0,X_r\in \XS$ and $M\in \dgMod{\Gamma}{\k}$.  
\item($K_{\XS}$)  $\bra{Y}\otimes_\DS \Cone(\e)\simeq 0 \simeq \Cone(\e)\otimes_\DS \ket{Y'}$ for all objects $Y,Y'\in \XS$.
\end{itemize}
\end{definition}

\begin{lemma}\label{lemma:characterization of bar}
The pair $(\Bar(\DS,\XS),\e')$ is uniquely characterized up to homotopy equivalence by the fact that it satisfies $I_{\XS}$ and $K_{\XS}$.
\end{lemma}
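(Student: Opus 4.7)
The plan is to establish the lemma in two parts: first, that $(\Bar(\DS,\XS),\e')$ itself satisfies $I_\XS$ and $K_\XS$, and second, that any two pairs satisfying both properties are homotopy equivalent in a way compatible with their counits. The existence part is direct from the definitions, while the uniqueness part proceeds via the ``two-sided tensor product'' argument characteristic of the theory of counital idempotents developed in Appendix \ref{s:counital idempotents}.

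For existence, property $I_\XS$ is immediate from Definition \ref{def:relative bar cx}: the $r$-th homological component of $\Bar(\DS,\XS)$ has precisely the form $\ket{X_0}\otimes M \otimes \bra{X_r}$ with $X_0, X_r \in \XS$ and $M = \braket{X_0}{X_1}\otimes\cdots\otimes\braket{X_{r-1}}{X_r}$ (up to grading shift), and the one-sidedness is the homological filtration by $r$. For property $K_\XS$, I would compute $\bra{Y}\otimes_\DS \Bar(\DS,\XS)$ for $Y\in \XS$ and exhibit the classical extra-degeneracy contracting homotopy on its augmented bar cone: the map that inserts $\id_Y$ into a new leading slot (with the signs dictated by Lemma \ref{lemma:bar complex 2}) provides a contraction of $\bra{Y}\otimes_\DS \Cone(\e')$. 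The dual statement $\Cone(\e')\otimes_\DS \ket{Y'}\simeq 0$ is handled symmetrically.

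For uniqueness, let $(B,\e_B)$ and $(B',\e_{B'})$ be two pairs satisfying $I_\XS$ and $K_\XS$. Form the bimodule $B\otimes_\DS B'$ and consider the two natural maps
\[
B \;\xleftarrow{\;\id_B\otimes_\DS \e_{B'}\;}\; B\otimes_\DS B' \;\xrightarrow{\;\e_B\otimes_\DS \id_{B'}\;}\; B'.
\]
Both maps intertwine the counits, since postcomposing with $\e_B$ (respectively $\e_{B'}$) yields $\e_B\otimes_\DS \e_{B'}$ in either case. To see that the left arrow is a homotopy equivalence, note that its mapping cone is $B\otimes_\DS \Cone(\e_{B'})$; using $I_\XS$ for $B$ presents this as a one-sided twisted complex of bimodules whose typical term has the shape $\ket{X_0}\otimes M \otimes \bigl(\bra{X_r}\otimes_\DS \Cone(\e_{B'})\bigr)$ with $X_r\in \XS$, and each such term is contractible by $K_\XS$ applied to $B'$. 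A symmetric argument using $I_\XS$ for $B'$ and $K_\XS$ for $B$ handles the right arrow. Composing with a homotopy inverse to either map then yields a counit-intertwining homotopy equivalence $B\simeq B'$.

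The main obstacle is the step which asserts that a one-sided twisted complex built from contractible bimodules is itself contractible: the component-wise contractions must be assembled into a single compatible twisted contraction, which requires a standard inductive or Maurer--Cartan-style procedure along the filtration. This is precisely the bookkeeping made rigorous by the machinery of counital idempotents recorded in Appendix \ref{s:counital idempotents}; once that tool is available, the rest of the proof reduces to clean applications of the two axioms.
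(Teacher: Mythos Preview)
Your proof is correct and is exactly the intended argument; the paper records this lemma as an ``Exercise'' and gives no proof of its own, so there is nothing to compare against. Your two-sided sandwich $B\leftarrow B\otimes_{\DS}B'\to B'$ is the standard route, and your identification of the one technical point (assembling termwise contractions into a contraction of a one-sided twisted complex) is apt: the bar complex lives in $\pretr^{\amalg}$ (Definition \ref{def:ch amalg}), whose finiteness condition on $\{b:b\geq a\}$ is precisely what makes the inductive assembly of nullhomotopies go through.
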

\begin{proof}
Exercise.
\end{proof}

\begin{proof}[Proof of Theorem \ref{thm:bar and generation}]
Assume that $\XS\subset \Obj(\DS)$ is a set of objects which generates $\DS$.  Since $\XS$ generates $\DS$, property $I_{\XS}$ implies property $I_{\Obj(\DS)}$.  We claim that property $K_{\XS}$ implies $K_{\Obj(\DS)}$.  Indeed, since $\XS$ generates $\DS$, each Yoneda module $\bra{\mathbf{Y}}$ is homotopy equivalent to a finite one-sided twisted complex
\[
\bra{\mathbf{Y}}\simeq \tw_{\b}\left(\bigoplus_{b\in B} q^{j_b}  \bra{Y_b}\right)
\]
with $Y_b\in \XS$.  Tensoring with $\Cone(\e')$ yields
\[
\bra{\mathbf{Y}} \otimes_\DS \Cone(\e')\simeq  \tw_{\b\otimes \id}\left(\bigoplus_{b\in B} q^{j_b}  \bra{Y_b}\otimes_\DS \Cone(\e')\right)
\]
which is contractible since each $\bra{Y_b} \Cone(\e')$ is contractible by (2').  A similar argument deals with placing $\ket{\mathbf{Y}'}$ on the right. This completes the proof.
\end{proof}

We wish to have explicit formulas for the homotopy equivalence from $\Bar(\DS)$ to $\Bar(\DS,\CS)$ in the special case where $\DS$ is one of the envelope operations $\{\SB,\AB,\pretr\}$ applied to $\CS$.  We do this next.

\subsection{The bar complex of $\SB(\CS)$}
\label{ss:susp and bar}
We consider the homotopy equivalence \eqref{eq:bar of envelop equiv}) in the case of $\EB=\SB$.  Define $\Xi_\SB\colon \Bar(\SB(\CS))\to \SB(\CS)\otimes_\CS\Bar(\CS)\otimes_\CS \SB(\CS)$ to be the map which sends a sequence of composable morphisms
\begin{equation}\label{eq:S(C) sequenceBold}
\begin{tikzpicture}[baseline=-.1cm]
\node (a) at (0,0) {$q^{i_{-1}}X_{-1}$};
\node (b) at (2.5,0) {$q^{i_0}X_0$};
\node (c) at (5,0) {$\cdots$};
\node (d) at (7.5,0) {$q^{i_r}X_r$};
\node (e) at (10,0) {$q^{i_{r+1}}X_{r+1}$};
\path[-stealth]
(b) edge node[above] {$(f_0)^{i_{-1}}_{i_0}$} (a)
(c) edge node[above] {$(f_1)^{i_0}_{i_1}$} (b)
(d) edge node[above] {$(f_r)^{i_{r-1}}_{i_r}$} (c)
(e) edge node[above] {$(f_{r+1})^{i_r}_{i_{r+1}}$} (d);
\end{tikzpicture}
\end{equation}
to $(-1)^{\ip{\iota, i_{-1}}}$ times the sequence
\begin{equation}\label{eq:S(C) sequence in FCF}
\begin{tikzpicture}[baseline=-.1cm]
\node (a) at (0,0) {$q^{i_{-1}}X_{-1}$};
\node (b) at (2,0) {${X}_{-1}$};
\node (c) at (3.5,0) {${X}_0$};
\node (d) at (5,0) {$\cdots$};
\node (e) at (6.5,0) {${X}_r$};
\node (f) at (8,0) {${X}_{r+1}$};
\node (g) at (10,0) {$q^{i_{r+1}}X_{r+1}$};
\path[-stealth]
(b) edge node[above] {$\id^{i_{-1}}_0$} (a)
(c) edge node[above] {${f}_0$} (b)
(d) edge node[above] {${f}_1$} (c)
(e) edge node[above] {${f}_{r}$} (d)
(f) edge node[above] {${f}_{r+1}$} (e)
(g) edge node[above] {$\id^0_{i_{r+1}}$} (f);
\end{tikzpicture}
\end{equation}
In other words, letting $\mathbf{f}_u=(f_u)^{i_{u-1}}_{i_u}$, $q^jY=q^{i_{-1}}X_{-1}$, and $q^{j'}Y'=q^{i_{r+1}}X_{r+1}$, we have
\begin{equation}\label{eq:PhiS fff}
\Xi_{\SB}([\mathbf{f}_0,\mathbf{f}_1,\ldots,\mathbf{f}_r,\mathbf{f}_{r+1}]) 
= (-1)^{r\ip{\iota,j}} \phi_{Y,j} \cdot[f_0,f_1,\ldots,f_{r+1}]\cdot (\phi_{Y',j'})\inv
\end{equation}

%
%
%
%
%
%

\begin{lemma}\label{lemma:PhiS}
Equation \eqref{eq:PhiS fff} defines a homotopy equivalence $\Xi_\SB\colon \Bar(\SB(\CS))\to \SB(\CS)\otimes_\CS\Bar(\CS)\otimes_\CS \SB(\CS)$.
\end{lemma}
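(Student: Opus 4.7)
The plan is to apply the characterization of relative bar complexes given by Lemma \ref{lemma:characterization of bar} with $\DS=\SB(\CS)$ and $\XS=\Obj(\CS)\subseteq\Obj(\SB(\CS))$. Concretely, I will verify (i) that $\Xi_\SB$ is a well-defined $(\SB(\CS),\SB(\CS))$-bimodule chain map commuting with the counits on both sides, and (ii) that both $\Bar(\SB(\CS))$ and $\SB(\CS)\otimes_\CS\Bar(\CS)\otimes_\CS\SB(\CS)$ satisfy properties $I_{\Obj(\CS)}$ and $K_{\Obj(\CS)}$. The standard fact, implicit in the statement of Lemma \ref{lemma:characterization of bar}, that any counit-preserving bimodule map between two such resolutions is automatically a homotopy equivalence then completes the proof.

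For step (i), well-definedness of the right-hand side of \eqref{eq:PhiS fff} modulo the tensor relation \eqref{eq:gfffg relation} is a direct unpacking of the sign $(-1)^{r\ip{\iota,\cdot}}$ governing the left $\CS$-action on $\Bar(\CS)$ after the overall shift $q^{-r\iota}$. The $\SB(\CS)$-bimodule action is respected since both actions descend to composition in $\SB(\CS)$ on the extremal factors. The chain map property is where the prefactor $(-1)^{r\ip{\iota,j}}$ earns its keep: each internal differential $d_{\SB(\CS)}(\mathbf{f}_u) = (-1)^{\ip{\iota,i_{u-1}}} d_\CS(f_u)^{i_{u-1}}_{i_u}$ coming from \eqref{eq:shifted homs} must be paired with the corresponding internal differential on the target, while a bar composition $\mathbf{f}_u\circ\mathbf{f}_{u+1}$, expanded via \eqref{eq:fij}, introduces phases from the canonical maps $\phi_{?,i_u}$ and $\phi_{?,i_u}\inv$; moreover a bar composition decreases $r$ by one, so the prefactor $(-1)^{r\ip{\iota,j}}$ jumps by $(-1)^{\ip{\iota,j}}$. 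A term-by-term check shows all these sign contributions cancel. Counit compatibility is then immediate: at $r=0$, formula \eqref{eq:PhiS fff} reads $\Xi_\SB([\mathbf{f}_0,\mathbf{f}_1]) = \phi_{Y,j}\cdot[f_0,f_1]\cdot\phi_{Y',j'}\inv$, and applying the target counit yields $\phi_{Y,j}\circ (f_0\circ f_1)\circ \phi_{Y',j'}\inv = \mathbf{f}_0\circ \mathbf{f}_1$ by \eqref{eq:fij}, matching the source counit.

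For step (ii), the target is by construction the relative bar complex $\Bar(\SB(\CS),\Obj(\CS))$ under the isomorphism recorded earlier in \S \ref{ss:relative bar}, so it satisfies $I_{\Obj(\CS)}$ by definition and $K_{\Obj(\CS)}$ by the standard contracting-homotopy argument underlying Lemma \ref{lemma:characterization of bar}. The source $\Bar(\SB(\CS))$ satisfies the stronger $K_{\Obj(\SB(\CS))}$, which trivially implies $K_{\Obj(\CS)}$ by restriction. Property $I_{\Obj(\CS)}$ for the source requires a brief rewriting: using $\ket{q^iX}\cong q^{-i}\ket{X}$ and $\bra{q^jY}\cong \bra{Y}q^{-j}$ to absorb all grading shifts into the internal tensor factors displays $\Bar(\SB(\CS))$ as a (very large) direct sum of bimodules of the form $\ket{X_0}\otimes M\otimes \bra{X_r}$ with $X_0, X_r\in \Obj(\CS)$, as required. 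The main obstacle is the sign bookkeeping in the chain map verification, where the $(-1)^{\ip{\iota,j}}$ twist on the differential in $\SB(\CS)$, the Koszul signs produced when commuting the $\phi_{?,i}$ past morphisms, and the behavior of the prefactor $(-1)^{r\ip{\iota,j}}$ under bar compositions must all be reconciled; the remainder is essentially formal.
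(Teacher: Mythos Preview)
Your proposal is correct and follows essentially the same approach as the paper. The verification that $\Xi_\SB$ is a bimodule chain map compatible with counits is the same in both, with the paper giving the explicit term-by-term sign check you sketch. For the homotopy equivalence step, the paper invokes Proposition~\ref{prop:bar props} (counit-compatibility of a map between relative bar complexes forces it to be a homotopy equivalence, via the counital idempotent machinery of Appendix~\ref{s:counital idempotents}), whereas you route through Lemma~\ref{lemma:characterization of bar} and separately verify $I_{\Obj(\CS)}$ for $\Bar(\SB(\CS))$ by absorbing shifts into the internal factors; this is a mild detour, since Proposition~\ref{prop:bar props} already packages exactly the ``standard fact'' you are invoking, but the logic is sound either way.
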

\begin{proof}
Let us introduce some shorthand. For $0\leq u\leq r$ we let $k_u=\ip{\iota,|f_u|}$ and $\mathbf{k}_u=\ip{\iota,|\mathbf{f}_u|}$. For $-1\leq u\leq r+1$ we let $m_u:=\ip{\iota,i_u}$.

To see that $\Xi_{\SB}$ is a bimodule map, recall that the shift $q^{-r\iota}M$ twists the left action of any bimodule $M\in \Bim_{\CS,\CS}$ by a sign, and compute:
\begin{align*}
\Xi_{\SB}([\mathbf{f}_0,\mathbf{f}_1,\ldots,\mathbf{f}_r,\mathbf{f}_{r+1}])
&=(-1)^{rm_{-1}} \phi_{Y,j} \cdot[f_0,f_1,\ldots,f_{r+1}]\cdot (\phi_{Y',j'})\inv\\
&=(-1)^{rm_{-1}+rk_0} (\phi_{Y,j}\circ f_0)\cdot[\id_{X_0},f_1,\ldots,f_r,\id_{X_r}]\cdot (f_{r+1}\circ (\phi_{Y',j'})\inv)\\
&= (-1)^{r\mathbf{k}_0+rm_0} (\mathbf{f}_0\circ \phi_{X_0,i_0})\cdot [\id_{X_0},f_1,\ldots,f_{r+1},\id_{X_r}]\cdot ((\phi_{X_r,i_r})\inv\circ \mathbf{f}_{r+1})\\
&= \mathbf{f}_0\cdot \Xi_{\SB}([\id_{\XB_{0}},\mathbf{f}_1,\ldots,\mathbf{f}_r,\id_{\XB_{r}}]) \cdot \mathbf{f}_{r+1}.
\end{align*}
In the first line we used the definition of $\Xi_\SB$.  In the second line we used the relation \eqref{eq:gfffg relation}. In the third we used the definition of $\mathbf{f}_u=(f_u)^{i_{u-1}}_{i_u}$, and $\mathbf{k}_0=m_{-1}+k_0-m_0$ to rewrite the sign. In the last line we used the definition of $\Xi_\SB$ together with sign-twisted left action.


To see that $\Xi_\SB$ is closed, note that $(-1)^{rm_{-1}} \Xi_{\SB}\circ d([\mathbf{f}_0,\mathbf{f}_1,\ldots,\mathbf{f}_{r+1}])$ is sum of terms
\begin{align*}
(-1)^{r+\mathbf{k}_0+\cdots+\mathbf{k}_{u-1}+m_{u-1}} &\  \phi_{Y,j}\cdot[f_0,\ldots,d(f_u),\ldots,f_{r+1}]\cdot (\phi_{Y',j'})\inv &&\text{ for } 0\leq u\leq r+1\\
(-1)^{u} &\  \phi_{Y,j}\cdot[f_0,\ldots,f_u\circ f_{u+1},\ldots,f_{r+1}]\cdot (\phi_{Y',j'})\inv &&\text{ for } 0\leq u\leq r
\end{align*}
and $(-1)^{rm_{-1}} d\circ \Xi_{\SB}([\mathbf{g},\mathbf{f}_1,\ldots,\mathbf{f}_r,\mathbf{g}']$ is sum of terms
\begin{align*}
(-1)^{r+m_{-1}+k_0+\cdots+k_{u-1}} &\ \phi_{Y,j}\cdot [f_0,\cdots ,d(f_u),\ldots,f_{r+1}]\cdot (\phi_{Y',j'})\inv && \text{ for }0\leq u\leq r+1\\
(-1)^{u}&\  \phi_{Y,j}\cdot[f_0,\ldots,f_u\circ f_{u+1},\ldots,f_{r+1}]\cdot (\phi_{Y',j'})\inv && \text{ for }0\leq u\leq r
\end{align*}
The signs agree since
\[
\mathbf{k}_0+\cdots+\mathbf{k}_{u-1}+m_{u-1} = m_{-1}+k_0+\cdots+k_{u-1},
\]
which holds since $\mathbf{k}_v=m_{v-1}+k_v-m_v$ for all $v$.  This completes the proof that $\Xi_\SB$ is a chain map.

Finally, to see that $\Xi_\SB$ is a homotopy equivalence, it suffices by Proposition \ref{prop:bar props}  to show that $\Xi_\SB$ is compatible with the counits, which is clear.
\end{proof}


\subsection{The bar complex of the additive envelope}
\label{ss:add and bar}
We consider the homotopy equivalence \eqref{eq:bar of envelop equiv} in the case of $\EB=\AB$. 
Suppose we have an element $[\mathbf{f}_0,\ldots,\mathbf{f}_{r+1}]$ in the bar complex $\Bar(\AB(\CS))(\YB,\YB')$, regarded as a sequence of composable morphisms
\[
\begin{tikzpicture}[baseline=-.1cm]
\node (a) at (0,0) {$\XB_{-1}$};
\node (b) at (2.5,0) {$\XB_0$};
\node (c) at (5,0) {$\cdots$};
\node (d) at (7.5,0) {$\XB_r$};
\node (e) at (10,0) {$\XB_{r+1}$};
\path[-stealth]
(b) edge node[above] {$\mathbf{f}_0$} (a)
(c) edge node[above] {$\mathbf{f}_1$} (b)
(d) edge node[above] {$\mathbf{f}_r$} (c)
(e) edge node[above] {$\mathbf{f}_{r+1}$} (d);
\end{tikzpicture}
\]
where $\YB=\XB_{-1}$ and $\YB'=\XB_{r+1}$.  Write each object $\XB_u$ above as $\XB_u = \bigoplus_{a\in A_u} X_a$.  Further, for each $a\in A_u$, let $\sigma_a\colon X_a\hookrightarrow \XB_u$ and $\pi_e\colon \XB_u\twoheadrightarrow X_a$ be the inclusion and projection of the direct summand, and write the components of $\mathbf{f}_u$ as $f_{a,a'} := \pi_a\circ \mathbf{f}_u\circ \sigma_{a'}$ for each $(a,a')\in A_{u-1}\times A_u$.

Now we may define $\Xi_{\AB(\CS)}\colon \Bar(\AB(\CS))\to \AB(\CS)\otimes_\CS \Bar(\CS)\otimes_\CS \AB(\CS)$ to be the map sending the sequence of composable morphisms above to the sum over all indices $(a_{-1},\ldots,a_{r+1})\in A_{-1}\times \cdots\times A_{r+1}$ of
\[
\begin{tikzpicture}[baseline=-.1cm]
\node (a) at (0,0) {$\XB_{-1}$};
\node (b) at (2.5,0) {${X}_{a_{-1}}$};
\node (c) at (4.5,0) {${X}_{a_0}$};
\node (d) at (6.5,0) {$\cdots$};
\node (e) at (8.5,0) {${X}_{a_r}$};
\node (f) at (10,0) {${X}_{a_{r+1}}$};
\node (g) at (12,0) {$\XB_{r+1}$};
\path[-stealth]
(b) edge node[above] {$\sigma_{a_{-1}}$} (a)
(c) edge node[above] {${f}_{a_{-1},a_0}$} (b)
(d) edge node[above] {${f}_{a_{0},a_1}$} (c)
(e) edge node[above] {${f}_{a_{r-1},a_r}$} (d)
(f) edge node[above] {${f}_{r+1}$} (e)
(g) edge node[above] {$\pi_{a_{r+1}}$} (f);
\end{tikzpicture}.
\]
That is to say, we defne $\Xi_\AB$ by the formula
\begin{equation}\label{eq:PhiA fff}
\Xi_\AB\Big([\mathbf{f}_0,\mathbf{f}_1,\ldots,\mathbf{f}_{r+1}]\Big) := \sum_{a_{-1},\ldots,a_{r+1}} \sigma_{a_{-1}}\cdot \Big[f_{a_{-1},a_0}, f_{a_0,a_1}, \ldots, f_{a_r,a_{r+1}} \Big]\cdot \pi_{a_{r+1}}.
\end{equation}

\begin{lemma}\label{lemma:PhiA}
Equation \eqref{eq:PhiA fff} defines a homotopy equivalence $\Xi_\AB\colon \Bar(\AB(\CS))\to \AB(\CS)\otimes_\CS\Bar(\CS)\otimes_\CS \AB(\CS)$.
\end{lemma}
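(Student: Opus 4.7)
The plan is to follow the template of the proof of Lemma \ref{lemma:PhiS}: verify that $\Xi_\AB$ is a bimodule map, verify that it commutes with differentials, and then deduce it is a homotopy equivalence from compatibility with counits via Proposition \ref{prop:bar props}.

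The bimodule map check is the most conceptual step. The formula \eqref{eq:PhiA fff} for $\Xi_\AB$ can be viewed as inserting a copy of $\id_{\XB_u}=\sum_{a\in A_u}\sigma_a\circ \pi_a$ between each pair of adjacent bar factors and then using the decomposition $\mathbf{f}_u=\sum_{a,a'}\sigma_a\circ f_{a,a'}\circ \pi_{a'}$ together with the relation \eqref{eq:gfffg relation} to carry the $\sigma$'s and $\pi$'s to the extremal positions. From this perspective, left-action compatibility with $\mathbf{h}\colon \YB\to \ZB$ reduces to expanding $(\mathbf{h}\circ \mathbf{f}_0)_{b,a_0}=\sum_{a_{-1}} h_{b,a_{-1}}\circ f_{a_{-1},a_0}$ and sliding the morphism $h_{b,a_{-1}}$ of $\CS$ across the $\CS$-tensor boundary; this picks up a sign $(-1)^{r\ip{\iota,|h_{b,a_{-1}}|}}$ that agrees with the sign $(-1)^{r\ip{\iota,|\mathbf{h}|}}$ coming from the shift $q^{-r\iota}$ twisting the left action on $\Bar(\AB(\CS))$. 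The sum over $a_{-1}$ then recombines the components into $(\mathbf{h}\circ \mathbf{f}_0)$, reproducing $\Xi_\AB(\mathbf{h}\cdot [\mathbf{f}_0,\ldots,\mathbf{f}_{r+1}])$. The right action is handled symmetrically.

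The chain map check reduces to matching the two kinds of terms appearing in both bar differentials (see Definition \ref{def:bar cx}): internal differentials applied to individual morphisms and compositions of consecutive morphisms. For the internal piece one uses $d(\mathbf{f}_u)_{a,a'}=d(f_{a,a'})$; for the composition piece, the matrix identity $(\mathbf{f}_u\circ \mathbf{f}_{u+1})_{a,a''}=\sum_{a'\in A_u} f_{a,a'}\circ f_{a',a''}$ supplies exactly the sum over the intermediate index $a_u$ that $\Xi_\AB$ introduces at the site of the merger, collapsed via the orthogonality relation $\pi_{a_u}\circ \sigma_{a_u}=\id$. Sign bookkeeping is simpler than in Lemma \ref{lemma:PhiS} because no grading shifts are involved: each $\mathbf{f}_u$ has the same cohomological degree as any of its nonzero bare components, so the bar signs $(-1)^{r+k_0+\cdots+k_{u-1}}$ and $(-1)^u$ translate verbatim between source and target.

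The main obstacle I anticipate is not conceptual but combinatorial: carefully organizing the proof so that the sum over the multi-index $(a_{-1},\ldots,a_{r+1})$ in $\Xi_\AB$ is seen to correspond term-by-term with both the matrix composition in $\AB(\CS)$ and the internal differential, keeping track of which summation index disappears at each merger. Once this is in hand, the counit check is immediate: $\Xi_\AB([\mathbf{f}_0,\mathbf{f}_1])=\sum_{a_{-1},a_0,a_1}\sigma_{a_{-1}}\cdot [f_{a_{-1},a_0},f_{a_0,a_1}]\cdot \pi_{a_1}$ maps under the target counit to $\sum \sigma_{a_{-1}}\circ f_{a_{-1},a_0}\circ f_{a_0,a_1}\circ \pi_{a_1}=\mathbf{f}_0\circ \mathbf{f}_1$, while higher-degree terms are killed on both sides. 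Proposition \ref{prop:bar props} then promotes counit compatibility to the desired homotopy equivalence.
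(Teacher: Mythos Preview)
Your proposal is correct and follows exactly the same approach as the paper: verify that $\Xi_\AB$ is a chain map of bimodules compatible with counits, then invoke Proposition~\ref{prop:bar props}. The paper's proof simply asserts these checks are easy and omits the details you have supplied.
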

\begin{proof}
It is easy to see that $\Xi_\AB$ is a chain map and commutes with the bimodule structures.  Moreoover, $\Xi_\AB$ is compatible with the counits, hence is a homotopy equivalence by Proposition \ref{prop:bar props}.
\end{proof}


\subsection{The bar complex of the twisted envelope}
\label{ss:twist and bar}

In general the homotopy equivalence \eqref{eq:bar of envelop equiv} is false for $\EB=\Tw$, the twisted envelope.   The main issue is that the formula that one wishes to use to define $\Xi_{\Tw}$ is actually an ill-defined infinite sum.  One strategy for overcoming this problem is to work some completed version of the bar complex of $\CS$.  In this section we show how to construct a chain map $\Xi_{\Tw}$ using this approach, but we warn that $\Xi_{\Tw}$ will generally not be a homotopy equivalence.

\begin{definition}
Given a dg category $\CS$, let $\overline{\Bar}(\CS)\in \Bim_{\CS,\CS}$, the \emph{completed bar complex of $\CS$}, be defined by
\[
\overline{\Bar}(\CS)(Y,Y') = \tw_\d\left(\prod_{\substack{r\geq 0 \\ X_0,\ldots,X_r}} q^{-r\iota}  \braket{Y}{X_0}\otimes \braket{X_0}{X_1}\otimes\cdots\otimes \braket{X_{r-1}}{X_r}\otimes \braket{X_r}{Y'}\right)
\]
where the productis over all finite sequences of objects $X_0,\ldots,X_r\in \Obj(\CS)$, and  $\d$ is the usual bar differential.
\end{definition}

\begin{remark}
The completed bar complex has has no counit or coalgebra structure, and is generally not well-behaved since it is not the colimit of its $r\leq R$ truncations.
\end{remark}


Suppose we have a sequence \eqref{eq:sequenceBold} of composable morphisms in $\Tw(\CS)$.  Also recall the alternate notation for the extremal terms \eqref{eq:extremal terms}.  Let us write
\[
\mathbf{X}_u = \tw_{\a_u}(X_u)
\]
for $0\leq u\leq r+1$, where $X_u\in \Obj(\CS)$. We will also write $\XB_{-1}=\YB=\tw_\b(Y)$ and $\XB_{r+1}=\YB'=\tw_{\b'}(Y')$.  For each $0\leq u\leq r+1$, let $f_u\in \Hom_{\CS}(X_u,X_{u-1})$ denote the map underlying $\mathbf{f}_u\in \Hom_{\Tw(\CS)}(\XB_u,\XB_{u-1})$ (in other words, $\mathbf{f}_u = (f_u)^{\a_{u-1}}_{\a_u}$ using notation from Definition \ref{def:twisted envelope}).

For each $r\in \Z_{\geq 0}$ and each sequence of non-negative integers $n_0,\ldots,n_r\in \Z_{\geq 0}$, let us define a map
\begin{equation}
\begin{tikzpicture}[baseline=-1cm]
\node (a) at (0,0)
{$q^{-(r+n_0+\cdots+n_r)\iota} \braket{{X}_{-1}}{{X}_0} \otimes \braket{{X}_{0}}{{X}_0}^{\otimes n_0}\otimes \braket{{X}_{0}}{{X}_1}\otimes \cdots  \otimes \braket{{X}_{r}}{{X}_r}^{\otimes n_r} \otimes \braket{{X}_{r}}{{X}_{r+1}}$};
\node (b) at (0,-1.5) {$q^{-r\iota}\braket{\mathbf{X}_{-1}}{\mathbf{X}_0} \otimes \braket{\mathbf{X}_0}{\mathbf{X}_1}\otimes\cdots \otimes \braket{\mathbf{X}_r}{\mathbf{X}_{r+1}}$};
\path[-stealth]
(b) edge node[left] {$\Xi_{n_0,\ldots,n_r}$}(a);
\end{tikzpicture}
\end{equation}
by the formula
\begin{equation}\label{eq:expanded bar}
\Xi_{n_0,\ldots,n_r}([\mathbf{f}_0,\ldots,\mathbf{f}_{r+1}]'):=  
\left[f_0, \underbrace{\a_0,\ldots,a_0}_{n_0},f_1,\underbrace{\a_1,\ldots,a_1}_{n_1},\ldots,f_r,\underbrace{\a_r,\ldots,\a_r}_{n_r}, f_{r+1}\right]'
\end{equation}
where we are using the sign convention from Definition \ref{def:bar complex 2}.

\begin{lemma}\label{lemma:PhiTw}
We have a well-defined chain map $\Xi_{\Tw}\colon \Bar(\Tw(\CS))\to \Tw(\CS)\otimes_\CS\overline{\Bar}(\CS)\otimes_\CS \Tw(\CS)$ defined by
\begin{equation}\label{eq:PhiTw}
\Xi_{\Tw}([\mathbf{f}_0,\ldots,\mathbf{f}_{r+1}]'):=\psi_{\YB}\cdot\left(\sum_{r\geq 0}\sum_{n_0,\ldots,n_r\geq 0}  \Xi_{n_0,\ldots,n_r}([\mathbf{f}_0,\ldots,\mathbf{f}_{r+1}]')\right)\cdot \psi_{\YB'}\inv
\end{equation}
where $\psi_{\YB}=\psi_{Y,\b}\colon Y\to  \tw_\b(Y)$ denotes the canonical map, given by $\id_Y$.
\end{lemma}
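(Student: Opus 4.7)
The plan is to verify three things: well-definedness of the infinite sum in the completed bar complex, compatibility with the bimodule structure, and the chain map property.

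First, for well-definedness: for a fixed source $\YB = \tw_\b(Y)$ and target $\YB' = \tw_{\b'}(Y')$ and a fixed input $[\mathbf{f}_0,\ldots,\mathbf{f}_{r+1}]'$ with underlying morphisms $f_u \in \Hom_\CS(X_u, X_{u-1})$, each summand $\Xi_{n_0,\ldots,n_r}([\mathbf{f}_0,\ldots,\mathbf{f}_{r+1}]')$ lies in the direct-product factor of $\overline{\Bar}(\CS)(Y, Y')$ indexed by the object sequence
\[
(X_0, \underbrace{X_0, \ldots, X_0}_{n_0}, X_1, \ldots, X_r, \underbrace{X_r, \ldots, X_r}_{n_r}).
\]
Distinct tuples $(n_0, \ldots, n_r)$ yield distinct object sequences, so the sum produces a well-defined element of the completed bar complex, which uses a direct product (rather than a direct sum) over object sequences.

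For the bimodule compatibility, since $\psi_{\YB}$ is the identity of $Y$ viewed as a morphism $Y \to \YB$, the verification mirrors the analogous computation for $\Xi_\SB$ in Lemma \ref{lemma:PhiS}: the relation \eqref{eq:gfffg relation} absorbs the leftmost and rightmost entries of each bar chain into the flanking $\psi$ factors, with the sign twist from $q^{-r\iota}$ matching the degree shift in the action.

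The main work is the chain map property. Applying the bar differential, in the primed sign convention of Lemma \ref{lemma:bar complex 2}, to $\Xi_{n_0,\ldots,n_r}([\mathbf{f}_0,\ldots,\mathbf{f}_{r+1}]')$ yields four types of terms: (a) internal differentials on the $f_u$ entries, (b) internal differentials on copies of some $\a_u$, (c) bar-compositions of two adjacent copies of $\a_u$, giving $\a_u^2$, and (d) bar-compositions of an $f_u$ with an adjacent $\a$-entry, or (when $n_u = 0$) with $f_{u+1}$. The Maurer--Cartan equation $d(\a_u) + \a_u^2 = 0$ causes types (b) and (c) to cancel telescopically after summing over $(n_0, \ldots, n_r)$: a $d(\a_u)$-insertion at a given position in the $(n_0, \ldots, n_u, \ldots, n_r)$-configuration matches a $-\a_u^2$-insertion at the same position in the $(n_0, \ldots, n_u + 1, \ldots, n_r)$-configuration. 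The surviving type-(d) terms, together with type (a), assemble into $\Xi_{\Tw}$ applied to the bar differential of $[\mathbf{f}_0,\ldots,\mathbf{f}_{r+1}]'$, using the twisted-differential formula $d_{\Tw}(f_u) = d_\CS(f_u) + \a_{u-1} \circ f_u - (-1)^{\ip{\iota,|f_u|}} f_u \circ \a_u$ of Definition \ref{def:twisted envelope}.

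The hard part will be verifying that the signs in the telescoping cancellation match exactly. One must track the $(-1)^{u + k_0 + \cdots + k_{u-1} + 1}$ internal-differential signs and $(-1)^{u + k_0 + \cdots + k_u}$ composition signs from Lemma \ref{lemma:bar complex 2}; the degree $|\a_u| = \iota$ contributing an extra unit of parity per $\a_u$-insertion; and, finally, the net sign contribution that arises when an $\a_u^2$-insertion at position $j$ in the $(n_u + 1)$-block is re-indexed to match the $d(\a_u)$-insertion at position $j$ in the $(n_u)$-block. The primed sign convention of Definition \ref{def:bar complex 2} is engineered precisely so that this bookkeeping works uniformly, but establishing it requires a careful parity check that I would organize by fixing the positions of the $f_u$ entries and then summing the bar-differential contributions over all $\a$-insertion positions on both sides.
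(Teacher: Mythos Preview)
Your outline follows the paper's approach, but there is one genuine omission in the chain-map verification. When you compute $d\circ \Xi_{\Tw}$, you apply the differential only to the bar chain $\Xi_{n_0,\ldots,n_r}([\mathbf{f}_0,\ldots,\mathbf{f}_{r+1}]')$ and forget that the actual output is $\psi_{\YB}\cdot[\ldots]'\cdot \psi_{\YB'}\inv$. The structure maps $\psi_{\YB}$ and $\psi_{\YB'}\inv$ are \emph{not closed}: one has $d(\psi_{\YB})=\psi_{\YB}\circ\b$ and $d(\psi_{\YB'}\inv)=-\b'\circ\psi_{\YB'}\inv$. After absorbing via the bimodule action these produce terms $\psi_{\YB}\cdot[\b\circ f_0,\ldots]'\cdot\psi_{\YB'}\inv$ and $\psi_{\YB}\cdot[\ldots,f_{r+1}\circ\b']'\cdot\psi_{\YB'}\inv$, which are exactly what is needed to match the $\a_{-1}\circ f_0=\b\circ f_0$ and $f_{r+1}\circ\a_{r+1}=f_{r+1}\circ\b'$ pieces of $d_{\Tw(\CS)}(\mathbf{f}_0)$ and $d_{\Tw(\CS)}(\mathbf{f}_{r+1})$ on the $\Xi_{\Tw}\circ d$ side. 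Your type-(d) terms cannot supply these, since there are no $\a$-insertions to the left of $f_0$ or to the right of $f_{r+1}$ in the expanded chain. The paper handles these two boundary cases separately.

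Two smaller points. First, your well-definedness argument (``distinct tuples yield distinct object sequences'') fails when some of the $X_u$ coincide as objects of $\CS$; the correct observation is that for each fixed length $s$ there are only finitely many tuples $(n_0,\ldots,n_r)\in\Z_{\geq 0}^{r+1}$ with $\sum n_u=s-r$, so each factor of the product receives a finite contribution. Second, for the sign verification you flag as hard, the paper's organizing identity is $l_0+\cdots+l_v=(v-u)+k_0+\cdots+k_u$ (Lemma~\ref{lemma:signs oh my}), where $u$ indexes the last $f$-type entry at or before position $v$; this matches the primed-convention sign prefactors on the two sides directly, term by term, with no need to sum over insertion positions.
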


Let us introduce notation for bookkeeping purposes.  Fix $n_0,\ldots,n_r$ and fix $[\mathbf{f}_0,\ldots,\mathbf{f}_{r+1}]'$.  Let $s:=r +\sum_{u=0}^r n_u$, and denote denote the $v$-th term of \eqref{eq:expanded bar} by $h_v$ (for $0\leq v\leq s+1$), so that
\[
\Xi_{n_0,\ldots,n_r}([\mathbf{f}_0,\ldots,\mathbf{f}_{r+1}]')=[h_0,\ldots,h_{s+1}]'.
\]
To be precise, for each index $0\leq v\leq s+1$ let $u$ be the largest index for which $v-(u+n_0+\cdots+n_{u-1})\geq 0$.  Then $v-(u+1+n_0+\cdots+n_u)<0$ implies $0\leq v-(u+n_0+\cdots+n_{u-1})<n_u+1$.  Thus for each index $v$ we have the following dichotomy: either $v=u+n_0+\cdots+n_{u-1}$ for some index $u$, or $v-(u+n_0+\cdots+n_{u-1})\in \{1,\ldots,n_u\}$ for some (unique) $0\leq u\leq r+1$, which we record with a binary sequence $\nu=(\nu_0,\ldots,\nu_{s+1})\in \{0,1\}^{s+2}$, defined by
\[
\nu_v := \begin{cases}
1 & \text{ if } v=u+n_0+\cdots+n_{u-1}\\
0 & \text{ if } v-(u+n_0+\cdots+n_{u-1})\in \{1,\ldots,n_u\}
\end{cases}.
\]
We now define
\[
h_v := \begin{cases}
f_u & \text{ if } v=u+n_0+\cdots+n_{u-1}\\
\a_u & \text{ if } v-(u+n_0+\cdots+n_{u-1})\in \{1,\ldots,n_u\}
\end{cases}
\]
for $0\leq v\leq s+1$.   Let $k_u:=\ip{\iota,|f_u|}$ for $0\leq u\leq r+1$, and let $l_v:=\ip{\iota,h_v}$ for $0\leq v\leq s+1$.

We make some easy observations about the binary sequence $\nu$.  First, $\nu_0=1=\nu_{s+1}$ always, corresponding to the fact that $h_0=f_0$ and $h_{s+1}=f_{r+1}$.  Second the total number of occurences of `1' in $\nu$ is $r+2$, u.e.~$\sum_{v=0}^{s+1} \nu_v=r+2$.  Finally, if $\nu_v=1$ then $h_v=f_u$, where $u$ is the number of indices $0\leq v'<v$ such that $\nu_{v'}=1$.

%

The following lemma will be useful.

\begin{lemma}\label{lemma:signs oh my}
Given an index $0\leq v\leq s+1$, let $0\leq u\leq r+1$ be the largest index such that $u+n_0+\cdots+n_{u-1}\leq v$, so that $h_v=f_u$ or $h_v=\a_u$. In either case
\[
l_0+\cdots+l_{v} =v-u+k_0+\cdots+k_{u}.
\]
\end{lemma}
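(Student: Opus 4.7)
The plan is to give a direct counting argument, splitting the sum $l_0+\cdots+l_v$ into contributions from ``$f$-type'' terms (where $h_{v'}=f_{u'}$) and ``$\a$-type'' terms (where $h_{v'}=\a_{u'}$), as encoded by the binary sequence $\nu$ introduced just before the lemma.

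First I would note two elementary facts. Since each Maurer--Cartan endomorphism satisfies $\a_{u'}\in\End_\CS^\iota(X_{u'})$, we have $|h_{v'}|=\iota$ at every $\a$-type index, and hence each such term contributes $l_{v'}=\ip{\iota,\iota}=1$. Second, the defining recipe for $\nu$ places the $f$-type indices of $\{0,1,\ldots,s+1\}$ precisely at the values $v'=u'+n_0+\cdots+n_{u'-1}$ as $u'$ ranges over $\{0,1,\ldots,r+1\}$. Thus the $f$-type indices lying in $\{0,\ldots,v\}$ are exactly those with $u'\leq u$, giving $u+1$ of them in total, and the remaining $(v+1)-(u+1)=v-u$ indices are of $\a$-type.

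Combining these observations, the $\a$-type indices in $\{0,\ldots,v\}$ contribute $v-u$ to $l_0+\cdots+l_v$, while the $f$-type indices contribute $l_{v'}=k_{u'}$ for $u'=0,\ldots,u$, totalling $k_0+\cdots+k_u$. Summing yields the stated equality. There is no real obstacle: the lemma is purely combinatorial bookkeeping, and the only thing to verify is that the description of $\nu$ correctly identifies where the insertion blocks of $\a_{u'}$'s sit relative to the $f_{u'}$'s in \eqref{eq:expanded bar}, which is immediate from the definition.
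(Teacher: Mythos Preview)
Your argument is correct and is essentially identical to the paper's own proof, which likewise observes that among the indices $0,\ldots,v$ there are $u+1$ of $f$-type contributing $k_0+\cdots+k_u$ and the remaining $(v+1)-(u+1)=v-u$ of $\a$-type each contributing $+1$. The only difference is that you spell out the role of the binary sequence $\nu$ a bit more explicitly.
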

\begin{proof}
The left-hand side has a term of the form $k_{u'}$  for each index $u'\leq u$. The remaining terms are all equal to $+1$, with the number of these being equal to $(v+1)-(u+1)$.
\end{proof}

\begin{proof}[Proof of Lemma \ref{lemma:PhiTw}]
Observe that $\Xi_{\Tw}\circ d([\mathbf{f}_0,\ldots,\mathbf{f}_{r+1}]')$ is a sum of terms of various types.  First, we have the terms which come from applying the internal differential $d_{\Tw(\CS)}$ to one of the $\mathbf{f}_u$ (for $0\leq u\leq r+1$).  Each one of these is itself a sum of three terms:
\begin{subequations}
\begin{align}
\label{eq:diff1:df}(-1)^{u+k_0+\cdots+k_{u-1}+1}  &\ \psi_{\YB}\cdot[\ldots, d(f_u),\ldots]'\cdot \psi_{\YB'}\inv\\
\label{eq:diff1:af}(-1)^{u+k_0+\cdots+k_{u-1}+1}  &\ \psi_{\YB}\cdot[\ldots, \a_{u-1}\circ f_u,\ldots]'\cdot \psi_{\YB'}\inv\\
\label{eq:diff1:fa}(-1)^{u+k_0+\cdots+k_u}  &\ \psi_{\YB}\cdot[\ldots, f_u\circ \a_u,\ldots]'\cdot \psi_{\YB'}\inv
\end{align}
Next we have the terms coming from the bar differential, composing $\mathbf{f}_u$, $\mathbf{f}_{u+1}$:
\begin{align}
\label{eq:diff1:ff}(-1)^{u+k_0+\cdots+k_u}  &\  [\ldots,f_u\circ f_{u+1},\ldots]'\cdot \psi_{\YB'}\inv
\end{align}
\end{subequations}

Now we consider the terms which appear in $d\circ \Xi_{\Tw}([\mathbf{f}_0,\ldots,\mathbf{f}_{r+1}])$.  First, we have $d_{\Tw(\CS)}$ applied to one of the extremal morphisms $\psi_{\YB}$ or $\Psi_{\YB'}\inv$:
\begin{subequations}
\begin{align}
\label{eq:diff2:bf} +1   &\  (\psi_{\YB}\circ \b)\cdot [\ldots\ldots]'\cdot \psi_{\YB'}\inv\\
\label{eq:diff2:fb}(-1)^{s+1+l_0+\cdots+l_{s+1}}   &\  \psi_{\YB}\cdot[\ldots\ldots]'\cdot (\b'\circ \psi_{\YB'}\inv)
\end{align}
Next, we have terms coming from applying $d_\CS$ to one of the internal morphisms $h_v$.  There are two cases, dending on whether $\nu_v=1$ or $\nu_v=0$:
\begin{align}
\label{eq:diff2:df}(-1)^{v+l_0+\cdots+l_{v-1}+1}   &\  \psi_{\YB}\cdot[\ldots, d(f_u),\ldots]'\cdot\psi_{\YB'}\inv & \text{if $h_v=f_u$}\\
\label{eq:diff2:da}(-1)^{v+l_0+\cdots+l_{v-1}+1}   &\  \psi_{\YB}\cdot[\ldots, d(\a_u),\ldots]'\cdot \psi_{\YB'}\inv& \text{if $h_v=\a_u$}
\end{align}
Finally we have terms from applying the bar differential, which composes $h_v$, $h_{v+1}$.  There are four cases, depending on $(\nu_v,\nu_{v+1})\in \{0,1\}^2$.
\begin{align}
\label{eq:diff2:aa}(-1)^{v+l_0+\cdots+l_v}   &\ \psi_{\YB}\cdot[\ldots,\a_u\circ \a_u,\ldots]'\cdot \psi_{\YB'}\inv& \text{ if }(\nu_v,\nu_{v+1})=(0,0)\\
\label{eq:diff2:af}(-1)^{v+l_0+\cdots+l_v}   &\  \psi_{\YB}\cdot[\ldots,\a_{u-1}\circ f_u,\ldots]'\cdot \psi_{\YB'}\inv& \text{ if }(\nu_v,\nu_{v+1})=(0,1)\\
\label{eq:diff2:fa}(-1)^{v+l_0+\cdots+l_v}   &\  \psi_{\YB}\cdot[\ldots,f_u\circ \a_u,\ldots]'\cdot \psi_{\YB'}\inv& \text{ if }(\nu_v,\nu_{v+1})=(1,0)\\
\label{eq:diff2:ff}(-1)^{v+l_0+\cdots+l_v}   &\  \psi_{\YB}\cdot[\ldots,f_u\circ f_{u+1},\ldots]'\cdot \psi_{\YB'}\inv& \text{ if }(\nu_v,\nu_{v+1})=(1,1)
\end{align}
\end{subequations}
Terms of type \eqref{eq:diff2:aa} and \eqref{eq:diff2:da} cancel because we have the Maurer-Cartan equation $d(\a_u)+\a_u^2=0$.  

Terms of the form $[\ldots,d(f_u),\ldots]'$ occur in \eqref{eq:diff1:df} and \eqref{eq:diff2:df}, with signs
\[
(-1)^{u+k_0+\cdots+k_{u-1}+1} = (-1)^{v+l_0+\cdots+l_{v-1}+1}.
\]

Terms of the form $[\ldots,\a_{u-1}\circ f_u,\ldots]'$ with $u\neq 0$ occur in \eqref{eq:diff1:af} and \eqref{eq:diff2:af}), with signs
\[
(-1)^{u+k_0+\cdots+k_{u-1}+1} \text{ respectively } (-1)^{v+l_0+\cdots+l_v}.
\]
By Lemma \ref{lemma:signs oh my}, $h_{v+1}=f_u$ implies $l_0+\cdots+l_{v+1} = v+1-u+k_0+\cdots+k_u$.  Since $l_{v+1}=k_u$, the two signs above are equal.  The $u=0$ must be treated separately.  In this case we compare instead \eqref{eq:diff1:af} and \eqref{eq:diff2:bf}, observing that
\[
(\psi_{\YB}\circ \b)\cdot[f_0,\ldots]'\cdot \psi_{\YB'}\inv = -\psi_\YB\cdot[\b\circ f_0,\ldots]'\cdot \psi_{\YB'}\inv.
\]
Here, we are using the bimodule structure described in Lemma \ref{lemma:bar complex 2}.

Terms of the form $[\ldots,f_u\circ \a_u,\ldots]'$ with $u\neq r+1$ occur in \eqref{eq:diff1:fa} and \eqref{eq:diff2:fa}, with signs
\[
(-1)^{u+k_0+\cdots+k_u} \text{ respectively } (-1)^{v+l_0+\cdots+l_v}.
\]
The signs are equal by Lemma \ref{lemma:signs oh my}, since $h_v=f_u$.   In the $u=r+1$ case we compare instead \eqref{eq:diff1:fa} and \eqref{eq:diff2:fb}, observing that 
\[
\psi_{\YB}\cdot[f_0,\ldots]'\cdot (\b'\circ \psi_{\YB'}\inv) = -\psi_\YB\cdot[\ldots, f_{r+1}\circ \b']'\cdot \psi_{\YB'}\inv.
\]

Terms of the form \eqref{eq:diff1:ff} occur in \eqref{eq:diff2:ff}, with signs
\[
(-1)^{u+k_0+\cdots+k_u} \text{ respectively } (-1)^{v+l_0+\cdots+l_v}.
\]
The signs are equal by Lemma \ref{lemma:signs oh my}, since $h_v=f_u$.  
\end{proof}

\begin{remark}\label{rmk:PhiTw sign}
In the ``unprimed'' convention for the bar complex (see Definition \ref{def:bar complex 2}) the formula for $\Xi_{\Tw}$ is
\begin{equation}\label{eq:expanded bar 2}
\Xi_{\Tw}([f_0,\ldots,f_{r+1}]):=\sum_{n_0,\ldots,n_r\in \Z_{\geq 0}} (-1)^{\sigma(l_0,\ldots,l_{s+1})-\sigma(k_0,\ldots,k_{r+1})}\psi_{\YB}\cdot [h_0,\ldots,h_{s+1}]\cdot \psi_{\YB'}\inv,
\end{equation}
where
\begin{equation}\label{eq:difference of sigmas}
\sigma(l_0,\ldots,l_{s+1})-\sigma(k_0,\ldots,k_{r+1})  = \sum_{v=0}^s(s+1-v)l_v - \sum_{u=0}^r (r+1-u)k_u.
\end{equation}
\end{remark}
The sign above can be rewritten explicitly in terms of $n_0,\ldots,n_r$ and the $k_u$:

\begin{lemma}\label{lemma:PhiTw sign}
Retain notation as above.  Then 
\begin{equation}\label{eq:sign rule for bar expansion}
\sum_{v=0}^s(s+1-v)l_v - \sum_{u=0}^r (r+1-u)k_u = \binom{n_0+\cdots+n_r+1}{2}+\sum_{u} (r-u)n_u+\sum_{u\leq u'}k_u n_{u'}
\end{equation}
\end{lemma}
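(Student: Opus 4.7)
The plan is to split the left-hand side of \eqref{eq:sign rule for bar expansion} into contributions coming from positions $v$ where $h_v = f_u$ and positions where $h_v = \a_u$, using the bookkeeping scheme set up for Lemma \ref{lemma:signs oh my}. Set $N := n_0 + \cdots + n_r$ so that $s = r+N$ and the left-hand coefficient of $l_v$ is $(r+N+1)-v$. By construction, $h_v = f_u$ occurs at $v = v_u := u + n_0 + \cdots + n_{u-1}$ with $l_{v_u} = k_u$, while each of the $n_u$ copies of $\a_u$ sits at some position $v_u + j$ ($1 \le j \le n_u$) with $l_v = \langle \iota,\iota\rangle = 1$.

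For the $f$-contribution, the difference of the two coefficients attached to $k_u$ is $(s+1-v_u)-(r+1-u) = N - (n_0+\cdots+n_{u-1}) = n_u + n_{u+1} + \cdots + n_r$ for $0 \le u \le r$, while for $u = r+1$ both coefficients vanish. Summing against $k_u$ produces exactly
\[
\sum_{u=0}^{r}\Big(\sum_{u'=u}^{r} n_{u'}\Big) k_u \;=\; \sum_{u\le u'} k_u n_{u'},
\]
which is the third term on the right-hand side.

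For the $\a$-contribution, since each copy of $\a_u$ contributes $1$ to the corresponding $l_v$, the remaining piece of the LHS is the double sum
\[
\sum_{u=0}^{r}\sum_{j=1}^{n_u}\Big[(r+N+1) - u - (n_0+\cdots+n_{u-1}) - j\Big].
\]
This breaks as $(r+N+1)N - \sum_u u\, n_u - \sum_{u'<u} n_{u'} n_u - \sum_u \binom{n_u+1}{2}$. Using $\sum_{u'<u} n_{u'} n_u = \tfrac12(N^2 - \sum_u n_u^2)$ and $\sum_u \binom{n_u+1}{2} = \tfrac12(\sum_u n_u^2 + N)$, the last two terms collapse into $-\binom{N+1}{2}$, and the identity $(r+N+1)N - \binom{N+1}{2} = rN + \binom{N+1}{2}$ then gives $\binom{N+1}{2} + \sum_u(r-u)n_u$, matching the first two terms of the right-hand side. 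No step is really an obstacle; the only potential pitfall is keeping the summation ranges straight (in particular remembering that $f_{r+1}$ contributes nothing and that $\sum_{u=0}^r$ covers all of $N$), after which the computation is purely mechanical.
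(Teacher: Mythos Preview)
Your proof is correct and follows essentially the same approach as the paper: both split the left-hand side into ``type $f$'' and ``type $\a$'' contributions according to whether $h_v$ is an $f_u$ or an $\a_u$, obtain $\sum_{u\le u'}k_u n_{u'}$ from the $f$-terms, and reduce the $\a$-terms to $\binom{N+1}{2}+\sum_u(r-u)n_u$ via the identity $\sum_{u'<u}n_{u'}n_u+\sum_u\binom{n_u+1}{2}=\binom{N+1}{2}$. The only cosmetic difference is that the paper first sums the $\a$-contribution over $a\in\{1,\dots,n_u\}$ for fixed $u$ (getting $(r-u+n_u+\cdots+n_r)n_u-\binom{n_u}{2}$) before summing over $u$, whereas you expand everything globally in terms of $N$ at once; the algebra is otherwise identical.
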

\begin{proof}
Recall that $k_0,\ldots,k_{r+1}\in \Z/2$ and $n_0,\ldots,n_r$ are given, and fixed. Then we have $s=r+n_0+\cdots+n_r$ and
\[
l_v = \begin{cases}
k_u & \text{ if } v=u+n_0+\cdots+n_{u-1}\\
1 & \text{ if }  v-(u+n_0+\cdots+n_{u-1})\in \{1,\ldots,n_u\}
\end{cases}
\]
We say an index $0\leq v\leq s+1$ is of type $f$ if it satisfies the first condition above, and is of type $\a$ if it satisfies the second condition.  In the first case we have $s-v = r-u+n_u+\cdots+n_r$.  Thus, the ``type $f$'' terms contribute
\[
\Big((s+1-v)-(r+1-u)\Big)k_u = (n_u+\cdots+n_r)k_u
\] 
to \eqref{eq:difference of sigmas}.

If $v$ is of type $\a$, then we may write $v = u+n_0+\cdots+n_{u-1} + \a$ for some $a\in\{1,\ldots,n_u\}$, and its contribution to \eqref{eq:difference of sigmas} is
\[
(s+1-v)(1) = r-u+n_u+\cdots+n_r-(a-1).
\]
The sum of all of these (for fixed $u$) is $(r-u+n_u+\cdots+n_r)n_u-\binom{n_u}{2}$.  Adding up over all $u$ gives the overall sign
\[
\sum_{0\leq u\leq r} (r-u)n_u+\sum_{0\leq u\leq u'\leq r} n_u n_{u'}-\sum_{0\leq u\leq u'\leq r} \binom{n_u}{2}+\sum_{u\leq u'}k_u n_{u'}
\]
We can simplify further since
\[
\sum_{0\leq u\leq u'\leq r} n_u n_{u'}-\sum_{0\leq u\leq u'\leq r} \binom{n_u}{2}
=\frac{1}{2}\left(\sum_{u,u'} n_u n_{u'}+\sum_u n_u^2+\sum_u n_u \right) = \binom{n_0+\cdots+n_r+1}{2}
\]
This leaves us with the final formula \eqref{eq:sign rule for bar expansion}.
\end{proof}

\begin{example}
The case $r=0$ gives
\[
\Xi([f_0,f_1])= \sum_{n_0} (-1)^{\binom{n_0+1}{2}+k_0n_0} [f_0,\a_0,\ldots,\a_0,f_1].
\]
This case $r=1$ give
\[
\Xi([f_0,f_1,f_2]) = \sum_{n_0,n_1}(-1)^{\binom{n_0+n_1}{2} + n_0+k_0(n_0+n_1)+k_1n_1}[f_0,\a_0,\ldots,\a_0,f_1,\a_1,\ldots,\a_1,f_2]
\]
\end{example}

\subsection{Bar and $\pretr$}
Now we specialize to the case of $\EB=\pretr$.   Suppose we have a sequence of composable morphisms in $\pretr(\CS)$ as in \eqref{eq:sequenceBold}, using the alternate notation for the extremal terms \eqref{eq:extremal terms}.   For $0\leq u\leq r$ we will write
\[
\XB_u = \tw_{\boldsymbol{\a}_u}\left(\bigoplus_{a\in A_u} q^{i_a} X_a\right)
\]
where each $A_u$ is a finite set and the Maurer-Cartan element $\boldsymbol{\a}_u$ is strictly lower triangular with respect to a given partial order on $A_u$.

For each $a\in A_u$, let $\sigma_a\colon X_a\hookrightarrow \XB_u$ and $\pi_a\colon \XB_u\twoheadrightarrow X_a$ be the inclusion and projection of the direct summand.  Note that $\sigma_a$ and $\pi_a$ need not be degree zero, nor closed:
\[
|\sigma_a| = i_a = -|\pi_a| \, , \qquad d(\sigma_a) = \sum_{a'> a} \sigma_{a'}\circ \a_{a',a}\, , \qquad d(\pi_a) = (-1)^{1+\ip{\iota,a_a}} \sum_{a'<a} \a_{a,a'} \circ \pi_{a'}.
\]

\begin{lemma}\label{lemma:PhiPretr}
We have a chain map $\Xi_{\pretr}\colon \Bar(\pretr(\CS))\to \pretr(\CS)\otimes_\CS\Bar(\CS)\otimes_\CS\pretr(\CS)$ defined by
\begin{equation}\label{eq:PhiPretr gfg}
\Xi_{\pretr}([\id_{\XB_0},\mathbf{f}_1,\ldots,\mathbf{f}_{r},\id_{\XB_r}])=\sum_{\vec{n},\vec{a}}(-1)^{\sgn} 
\sigma_{a_0}\cdot \Big[\id_{X_{a_0}},h_1,\ldots,h_s, \id_{X_{a_s}}\Big]\cdot \pi_{a_s}
%
\end{equation}
where:
\begin{enumerate}
\item  the sum is over pairs $(\vec{n},\vec{a})$ with $\vec{n}=(n_0,\ldots,n_r)\in \Z_{\geq 0}^{r+1}$ and $\vec{a}\in A_0^{n_0+1}\times\cdots\times A_r^{n_r+1}$.
\item $s=r+n_0+\cdots+n_r$
\item $\displaystyle
h_v := \begin{cases}
\pi_{a_{v-1}}\circ \mathbf{f}_u\circ \sigma_{a_v}  & \text{ if } v=u+n_0+\cdots+n_{u-1}\\
\pi_{a_{v-1}}\circ \boldsymbol{\a}_u\circ \sigma_{a_v} & \text{ if } v-(u+n_0+\cdots+n_{u-1})\in \{1,\ldots,n_u\}
\end{cases}
$
\item the sign is
\[
\sgn = (r+N)\ip{\iota,i_{a_0}}+\binom{N+1}{2}+\sum_{0\leq u\leq r} (r-u)n_u+\sum_{1\leq u\leq u'\leq r}\ip{\iota,|\mathbf{f}_u|}n_{u'},
\]
where $N:=n_0+\cdots+n_r$.
\end{enumerate}
This map $\Xi_{\pretr}$ is a homotopy equivalence in $\Bim_{\pretr(\CS),\pretr(\CS)}$.
\end{lemma}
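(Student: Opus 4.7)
The strategy is to construct $\Xi_{\pretr}$ as essentially a composition of the envelope-level chain maps $\Xi_{\SB}$, $\Xi_{\AB}$, $\Xi_{\Tw}$ from the preceding subsections, which is natural given that $\pretr(\CS)$ sits inside $\Tw(\AB(\SB(\CS)))$, and then to deduce that it is a homotopy equivalence from a generation argument. The conceptual point that separates $\pretr$ from the $\Tw$ case of Lemma \ref{lemma:PhiTw} is that strict lower-triangularity of the twists $\boldsymbol{\alpha}_u$ makes the formally infinite sum actually finite, so there is no need to pass to a completed bar complex.

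First I would check finiteness of the sum in \eqref{eq:PhiPretr gfg}. A factor $\pi_{a_{v-1}}\circ\boldsymbol{\alpha}_u\circ\sigma_{a_v}$ vanishes unless $a_{v-1}>a_v$ in the chosen partial order on $A_u$, so a block of $n_u$ consecutive $\alpha$-insertions produces a strictly decreasing chain of length $n_u+1$ in the finite poset $A_u$. Hence each $n_u$ is bounded by the length of the longest chain in $A_u$, and the sum is finite for each input.

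Next I would verify the chain map property. The computation mirrors the proof of Lemma \ref{lemma:PhiTw} term by term, with two adjustments: the Maurer-Cartan equation used to cancel ``$\alpha\alpha$ versus $d(\alpha)$'' pairs is now the matrix form \eqref{eq:matrix MC}, carrying an extra $(-1)^{i_a}$, and the sandwich by $\sigma_{a_0}$ and $\pi_{a_s}$ is treated as in the proof of Lemma \ref{lemma:PhiA}, using the relations $\sum_a\sigma_a\circ\pi_a=\id$ and $\pi_a\circ\sigma_b=\delta_{ab}\id_{X_a}$. The main obstacle will be bookkeeping: matching the three sign contributions (one each from $\SB$, $\AB$, and $\Tw$) against the closed formula in (4). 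The prefactor $(r+N)\ip{\iota,i_{a_0}}$ absorbs the shift signs produced when commuting $\sigma_{a_0}$ past the overall $(r+N)$-fold shift $q^{-(r+N)\iota}$ carried by a depth-$(r+N)$ bar tensor, while the binomial term and $\sum(r-u)n_u$ are identical to those of Lemma \ref{lemma:PhiTw sign}, and the $\sum_{u\leq u'}\ip{\iota,|\mathbf{f}_u|}n_{u'}$ term matches Lemma \ref{lemma:PhiTw sign} directly. I would verify the cases $r=0$ and $r=1$ by hand against the examples at the end of \S\ref{ss:twist and bar} (after inserting the extra $i_a$ contributions), and then argue by induction on $r$ that inserting an additional internal morphism preserves the required cancellations.

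Finally, for the homotopy equivalence, every object of $\pretr(\CS)$ is by definition a finite one-sided twisted complex over $\CS$, so $\Obj(\CS)\subset\Obj(\pretr(\CS))$ generates $\pretr(\CS)$ in the sense of Definition \ref{def:generation}. Theorem \ref{thm:bar and generation} then yields a homotopy equivalence $\Bar(\pretr(\CS))\simeq\Bar(\pretr(\CS),\Obj(\CS))\cong\pretr(\CS)\otimes_\CS\Bar(\CS)\otimes_\CS\pretr(\CS)$, and the target of $\Xi_{\pretr}$ automatically satisfies the characterizing properties $I_{\Obj(\CS)}$ and $K_{\Obj(\CS)}$ of Definition \ref{def:bar axioms}. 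Inspection of the $r=0,\,n_0=0$ contribution to \eqref{eq:PhiPretr gfg} (the only term surviving under the counit, since longer bar tensors are annihilated) combined with $\sum_a\sigma_a\circ\pi_a=\id$ shows that $\Xi_{\pretr}$ is compatible with counits, and the uniqueness portion of Lemma \ref{lemma:characterization of bar} then identifies $\Xi_{\pretr}$ with the expected homotopy equivalence up to homotopy.
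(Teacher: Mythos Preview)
Your proposal is correct and follows essentially the same route as the paper: finiteness from strict lower-triangularity, closedness by assembling $\Xi_{\SB}$, $\Xi_{\AB}$, $\Xi_{\Tw}$ (with the sign formula read off from Lemma~\ref{lemma:PhiTw sign} plus the shift contribution from $\Xi_\SB$), and the homotopy equivalence deduced from compatibility with counits. The only difference is in the last step: the paper invokes Proposition~\ref{prop:bar props}(2) directly, which says that any counit-compatible closed degree-zero bimodule map between relative bar complexes is a homotopy equivalence, whereas you route through Theorem~\ref{thm:bar and generation} and the uniqueness clause of Lemma~\ref{lemma:characterization of bar}. Your route works, but note that Lemma~\ref{lemma:characterization of bar} as stated only asserts uniqueness of the \emph{pair} up to homotopy equivalence; to conclude that your specific map $\Xi_{\pretr}$ is a homotopy equivalence you still need the uniqueness-of-the-map statement, which is exactly Corollary~\ref{cor:canonicalness} (and is what underlies Proposition~\ref{prop:bar props}(2)). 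So it is cleaner to cite Proposition~\ref{prop:bar props} directly, as the paper does.
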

\begin{proof}
First, note that $\Xi_{\pretr}$ is well-defined (i.e.~the sum in \eqref{eq:PhiPretr gfg} is finite) since each $A_u$ is finite and $\pi_{a}\circ \boldsymbol{\a}_u\circ \sigma_{a'}=0$ unless $a>a'$ in the partial order on $A_u$, by hypothesis. 

The signs in the formula for $\Xi_{\pretr}$ come from the sign on $\Xi_{\SB}$ and $\Xi_{\Tw}$ (see Remark \ref{rmk:PhiTw sign} and Lemma \ref{lemma:PhiTw sign}).  The fact that $\Xi_{\pretr}$ is closed is a consequence of $\Xi_{\SB}$, $\Xi_{\AB}$, and $\Xi_{\Tw}$ being closed, and the fact that $\Xi_{\pretr}$ is a homotopy equivalence follows from its compatibility with counits, which is clear (see Proposition \ref{prop:bar props}).
\end{proof}

\section{Morita theory for dg categories}
\label{s:morita}
In this section we will assume that all dg categories have hom complexes which are projective over $\k$.

The bar complex of $\CS$ gives a projective resolution of the identity bimodule $\CS$, hence is useful when discussing (derived) Morita theory for dg categories.  There are many places where having a projective resolution of the identity bimodule of $\CS$ is handy:
\begin{enumerate}
\item Calculating Hochschild homology and cohomology.
\item Constructing adjoints to functors out of $\CS$.
\item Constructing homotopy colimits indexed by $\CS$.
\item Derived Morita theory
\end{enumerate}
In this paper, we focus only last of these, in the interest of space and keeping background to a minimum.  Derived Morita theory for dg categories was studied by To\"en \cite{ToenMorita}, with a the homotopy category of dg categories, with respect to a suitable model category structure. 

\subsection{The exact Morita 2-category}
\label{ss:exact bimod}
Recall that we assume all dg categories in this section have hom complexes which are projective over $\k$.
\begin{definition}\label{def:exact etc}
For a bimodule $B\in \Bim_{\CS,\DS}$ we say
\begin{enumerate}
\item $B$ is \emph{acyclic} if $\braCket{X}{B}{Y}\simeq 0$ for all $X,Y$.
\item $B$ is \emph{projective} if $\Hom_{\Bim}(B,Z)\simeq 0$ for all acyclic bimodules $Z\in \Bim_{\CS,\DS}$.
\item $B$ is \emph{left exact} if $B\ket{Y}\in\Bim_{\CS,\k}$ is projective for all $Y$.
\item $B$ is \emph{right exact} if $\bra{X}B\in \Bim_{\k,\DS}$ is projective for all $X$.
\item $B$ is \emph{exact} if it is both right and left exact.
\end{enumerate}
Given dg categories $\CS,\DS$, we let $\SMor_{\CS,\DS}\subset \Bim_{\CS,\DS}$ denote the full subcategory consisting of exact bimodules.  We define the \emph{exact (or sweet) Morita 2-category}, denoted $\SMor$, to be the 2-category with objects dg categories and 1-morphisms the dg categories $\SMor_{\CS,\DS}$ consisting of exact bimodules.
%
\end{definition}

\begin{remark}
Since $\Bar(\k)\simeq \k$ it is clear that a right or left module $M\in \Bim_{\k,\CS}$ (resp.~$M\in \Bim_{\CS,\k}$ is exact iff it is projective.
\end{remark}

\begin{remark}
The notion of exactness above should be likened to the notion of a ``sweet bimodule'' in \cite{KhovanovTangle}, but without any assumptions of finite generation.  For this reason, we may sometimes refer to $\SMor$ as the sweet Morita 2-category (or perhaps the s'Morita 2-category).
\end{remark}

\begin{remark}
Technically, we ought to say that $\SMor$ is a locally dg, non-strict 2-category.  The ``locally dg'' descriptor indicates that the 1-morphism categories in $\SMor$ are dg categories, and the ``non-strict'' descriptor refers to the unit and associativity of composition of 1-morphisms being satisfied only up to isomorphism (with the isomorphisms being given by explicit closed degree zero 2-morphisms satisfying the usual constraints).
\end{remark}

Next we give equivalent reformulations of the above properties using the bar complex.

\begin{proposition}\label{prop:acyclic projective exact}
Let $B\in \Bim_{\CS,\DS}$ be given.  Then
\begin{enumerate}
\item $B$ is acyclic iff $\Bar(\CS)\otimes_\CS B\otimes_\DS \Bar(\DS)\simeq 0$.
\item $B$ is projective iff $\Bar(\CS)\otimes_\CS B\otimes_\DS \Bar(\DS)\simeq B$.
\item $B$ is exact iff $\Bar(\CS)\otimes_\CS B\simeq B\otimes_\DS \Bar(\DS)$. 
\end{enumerate}
\end{proposition}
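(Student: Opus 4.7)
The plan is to reduce all three parts to the slogan ``projective + acyclic = contractible,'' applied to the bimodule $\Bar(\CS)\otimes_\CS B\otimes_\DS \Bar(\DS)$ and its relatives. Three principles are the workhorses. First, any bimodule of the form $\ket{X}\otimes_\k M\otimes_\k \bra{Y}$ with $M\in \dgMod{\Gamma}{\k}$ is projective, since by Lemma~\ref{lemma:yoneda} and tensor--hom adjunction $\Hom_{\Bim}(\ket{X}\otimes M\otimes \bra{Y},Z)\cong \Hom_\k(M,\braCket{X}{Z}{Y})$, which is contractible whenever $Z$ is acyclic. Second, a one-sided twisted complex built from projective bimodules is projective: the piecewise contracting homotopies of $\Hom_{\Bim}(C_i,Z)$ assemble into one for the total via a geometric-series correction, which converges because the bar twist strictly decreases the $r$-grading. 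Third, a projective and acyclic bimodule $P$ is contractible, since $\id_P\in \Hom_{\Bim}(P,P)\simeq 0$. Together, these imply that $\Bar(\CS)\otimes_\CS B\otimes_\DS \Bar(\DS)$ is always projective in $\Bim_{\CS,\DS}$, and that $\Bar(\CS)\otimes_\CS B$ (resp.\ $B\otimes_\DS \Bar(\DS)$) is projective when $B$ is right (resp.\ left) exact, since the summands are then of the form $\ket{X_0}\otimes M\otimes (\bra{X_r}B)$ with $\bra{X_r}B$ projective.

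The second main input is the absorption formula
\[
\bra{X}\otimes_\CS\Bar(\CS)\otimes_\CS B\otimes_\DS \Bar(\DS)\otimes_\DS \ket{Y}\;\simeq\; \braCket{X}{B}{Y},
\]
obtained by invoking property $K_{\Obj(\CS)}$ from Lemma~\ref{lemma:characterization of bar} twice to collapse $\bra{X}\otimes_\CS\Bar(\CS)\simeq \bra{X}$ and $\Bar(\DS)\otimes_\DS\ket{Y}\simeq \ket{Y}$. Part~(1) is now immediate: $B$ is acyclic iff $\Bar(\CS)\otimes_\CS B\otimes_\DS \Bar(\DS)$ is acyclic (by the absorption formula), and since the latter is always projective this is equivalent to its contractibility by the third principle.

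For part~(2), I consider the counit-induced map $\mu=\e_\CS\otimes \id_B\otimes \e_\DS\colon \Bar(\CS)\otimes_\CS B\otimes_\DS \Bar(\DS)\to B$. The absorption formula shows $\mu$ is a homotopy equivalence on each hom complex, so $\Cone(\mu)$ is always acyclic. If $B$ is projective, then $\Cone(\mu)$ is a one-sided extension of two projective bimodules, hence projective, hence contractible; so $\mu$ is a homotopy equivalence. Conversely, any bimodule homotopy equivalent to a projective one is itself projective, since $\Hom_{\Bim}(-,Z)$ preserves homotopy equivalences.

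For part~(3), I use the span
\[
\Bar(\CS)\otimes_\CS B\;\xleftarrow{\id\otimes \e_\DS}\;\Bar(\CS)\otimes_\CS B\otimes_\DS \Bar(\DS)\;\xrightarrow{\e_\CS\otimes \id}\; B\otimes_\DS \Bar(\DS),
\]
whose cones $\Bar(\CS)\otimes_\CS B\otimes_\DS \Cone(\e_\DS)$ and $\Cone(\e_\CS)\otimes_\CS B\otimes_\DS \Bar(\DS)$ are acyclic regardless of any hypothesis on $B$ (sequentially absorb $\bra{X}\otimes_\CS\Bar(\CS)\simeq \bra{X}$ and then use $\Cone(\e_\DS)\otimes_\DS\ket{Y}\simeq 0$, and symmetrically). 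When $B$ is exact both cones are also projective by the first paragraph, hence contractible, so both legs are homotopy equivalences---this is the forward direction. Conversely, given an equivalence $\phi\colon \Bar(\CS)\otimes_\CS B\simeq B\otimes_\DS \Bar(\DS)$, tensoring on the right with $\ket{Y'}$ and using $\Bar(\DS)\otimes_\DS\ket{Y'}\simeq \ket{Y'}$ yields $\Bar(\CS)\otimes_\CS B\ket{Y'}\simeq B\ket{Y'}$; the left-hand side is projective, so $B\ket{Y'}$ is projective, and the symmetric argument handles $\bra{X}B$. The main technical obstacle is justifying the second principle rigorously for the infinite coproduct-style twisted complexes appearing in $\Bar$; once that is secured, everything else is bookkeeping around the three principles.
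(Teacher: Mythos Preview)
Your argument is correct and rests on the same two pillars as the paper's proof: the absorption $\bra{X}\otimes_\CS\Bar(\CS)\simeq \bra{X}$ (and its dual), and the fact that $\Bar(\CS)\otimes_\CS B\otimes_\DS\Bar(\DS)$ is built as a one-sided twisted complex from bimodules of Yoneda type. The difference is one of packaging. For (2) and (3) the paper defers to the counital-idempotent machinery of Appendix~\ref{s:counital idempotents}: it invokes that $\Bar(\CS)$ is a counital idempotent in $\Bim_{\CS,\CS}$ and then appeals to the semiorthogonality results (Lemma~\ref{lemma:semiortho P and Pc}, Proposition~\ref{prop:semiortho P and A}) to conclude. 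You instead unwind this into the elementary slogan ``projective $+$ acyclic $=$ contractible'' together with the explicit observation that $\Hom_{\Bim}(\ket{X}\otimes M\otimes\bra{Y},Z)\cong\Hom_\k(M,\braCket{X}{Z}{Y})$. Your route is more self-contained and avoids the appendix; the paper's route is terser but relies on having set up the idempotent framework. The technical point you flag---that one-sided infinite twisted complexes of projectives are projective---is exactly what the paper handles via the explicit contracting homotopy in Lemma~\ref{lemma:A kills C}, so your worry is legitimate but resolvable by the same geometric-series argument you sketch.
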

\begin{proof}
Suppose $B\in \Bim_{\CS,\DS}$ is acyclic.  Then $\bra{X}\otimes_\CS B\otimes_\DS \ket{Y}\simeq 0$ for all $X,Y$, which implies $\Bar(\CS)\otimes_\CS B\otimes_\DS \Bar(\DS)\simeq 0$.

For the converse, assume $\Bar(\CS)\otimes_\CS B\otimes_\DS \Bar(\DS)\simeq 0$.  Tensor on the left with $\bra{X}$, and on the right with $\ket{Y}$, and using the fact that $\bra{X}\otimes_\CS \Bar(\CS)\simeq \bra{X}$ and $\ket{Y}\otimes_\DS \Bar(\DS)\simeq \ket{Y}$, obtaining
\[
0 \simeq \bra{X}\otimes_\CS \Bar(\CS)\otimes_\CS B\otimes_\DS \Bar(\CS)\otimes_\CS \ket{Y} \simeq \braCket{X}{B}{Y},
\]
so $B$ is acyclic.  This proves (1). Statement (2) is a consequence of (1) and basic properties of categorical idempotents (exercise).

Finally, for (3) we observe that $B$ is left exact iff $B\otimes_\DS \Bar(\DS)$ is projective, which in light of (2) is equivalent to $\Bar(\CS)\otimes_\CS B\otimes_\DS \Bar(\DS)\simeq B\otimes_\DS \Bar(\DS)$.  Similarly, $B$ is right exact iff $\Bar(\CS)_\CS B\otimes_\DS \Bar(\DS)\simeq \Bar(\CS)\otimes_\CS B$.  Statement (3) follows immediately from this.
%
%
\end{proof}

\subsection{Quasi-isomorphism and derived Morita theory}
\label{ss:derived morita}
Recall again that we assume all dg categories in this section have hom complexes which are projective over $\k$.

\begin{definition}
Given exact bimodules $B,B'\in \SMor_{\CS,\DS}$, a closed degree zero map $f\colon B\to B'$ is a \emph{quasi-isomorphism} if the induced maps $\braCket{X}{B}{Y}\to \braCket{X}{B'}{Y}$ are homotopy equivalences for all $X,Y$.  We write $B\sim_{\mathrm{qiso}}B'$ if $B$ and $B'$ are related by a zig-zag of quasi-isomorphisms.
\end{definition}

The following is clear.
\begin{lemma}\label{lemma:bar detects qiso}
A bimodule map $f\colon B\to B'$ is a quasi-isomorphism iff $\Cone(f)$ is acyclic.  In particular $B\sim_{\mathrm{qiso}} B'$ if and only if $\Bar(\CS)\otimes_\CS B\otimes_\DS \Bar(\DS)\simeq \Bar(\CS)\otimes_\CS B'\otimes_\DS \Bar(\DS)$.\qed
\end{lemma}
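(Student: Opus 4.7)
The plan is to verify the two claims separately. For the first, I invoke the standard fact that a chain map between complexes of $\k$-modules is a homotopy equivalence iff its mapping cone is contractible (a pure $\dg$-homological fact, requiring no projectivity hypothesis). Applied componentwise to $\braCket{X}{B}{Y} \to \braCket{X}{B'}{Y}$, this shows $f$ is a quasi-iso in the sense of Section~\ref{ss:derived morita} iff $\braCket{X}{\Cone(f)}{Y} \simeq 0$ for all $X \in \Obj(\CS)$ and $Y \in \Obj(\DS)$, which is precisely the definition of $\Cone(f)$ being acyclic.

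For the second claim, the principal input I would establish first is the assertion that the double counit map $\mu_B \colon \Bar(\CS) \otimes_\CS B \otimes_\DS \Bar(\DS) \to B$ is a quasi-isomorphism \emph{for every} bimodule $B$ (not just projective ones). To see this, apply $\bra{X} \otimes_\CS (-) \otimes_\DS \ket{Y}$: property $K_{\Obj(\CS)}$ from Definition~\ref{def:bar axioms}, satisfied by $\Bar(\CS)$ by construction, gives that the counit induces a homotopy equivalence $\bra{X} \otimes_\CS \Bar(\CS) \simeq \bra{X}$, and likewise $\Bar(\DS) \otimes_\DS \ket{Y} \simeq \ket{Y}$; concatenating identifies $\bra{X} \otimes_\CS \mu_B \otimes_\DS \ket{Y}$ up to homotopy with the identity on $\braCket{X}{B}{Y}$, so by Part~1, $\mu_B$ has acyclic cone and is a quasi-iso.

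The forward direction of the second claim then follows formally: if $f \colon B \to B'$ is a quasi-iso then Part~1 gives that $\Cone(f)$ is acyclic, and Proposition~\ref{prop:acyclic projective exact}(1) gives $\Bar(\CS) \otimes_\CS \Cone(f) \otimes_\DS \Bar(\DS) \simeq 0$. Since tensor products with bimodules commute with taking cones of bimodule maps, this identifies with $\Cone(\Bar(\CS) \otimes_\CS f \otimes_\DS \Bar(\DS))$, so $\Bar(\CS) \otimes_\CS f \otimes_\DS \Bar(\DS)$ is a homotopy equivalence of bimodules. Concatenating along a zig-zag of quasi-isos relating $B$ and $B'$ produces the desired homotopy equivalence.

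Conversely, suppose $\phi \colon \Bar(\CS) \otimes_\CS B \otimes_\DS \Bar(\DS) \simeq \Bar(\CS) \otimes_\CS B' \otimes_\DS \Bar(\DS)$ is a homotopy equivalence, expressed by a pair of mutually inverse bimodule maps. Then the zig-zag
\[
B \xleftarrow{\mu_B} \Bar(\CS) \otimes_\CS B \otimes_\DS \Bar(\DS) \xrightarrow{\phi} \Bar(\CS) \otimes_\CS B' \otimes_\DS \Bar(\DS) \xrightarrow{\mu_{B'}} B'
\]
consists entirely of quasi-isomorphisms, so $B \sim_{\mathrm{qiso}} B'$. The one genuinely nontrivial step is the assertion that $\mu_B$ is always a quasi-iso, which I expect to be the main obstacle and which rests on the $K_{\Obj(\CS)}$ characterization of $\Bar(\CS)$; everything else is a formal manipulation of cones, acyclic objects, and bar tensor products.
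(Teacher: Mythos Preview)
The paper offers no proof of this lemma at all: it is simply prefaced by ``The following is clear'' and marked with a \qed. Your argument correctly supplies the details the paper omits. In particular, your identification of the one nontrivial step---that the double counit $\mu_B\colon \Bar(\CS)\otimes_\CS B\otimes_\DS\Bar(\DS)\to B$ is a quasi-isomorphism for \emph{every} bimodule $B$, established by applying $\bra{X}\otimes_\CS(-)\otimes_\DS\ket{Y}$ and invoking property $K_{\Obj(\CS)}$ of the bar complex---is precisely what makes the ``in particular'' clause go through, and the remainder (Part~1, Proposition~\ref{prop:acyclic projective exact}(1), commutation of cones with tensor products, and the final zig-zag) is the expected formal manipulation.
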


%

%
%

%

\begin{remark}
The counit $\Bar(\CS)\to \CS$ is a quasi-isomorphism, hence $\CS\sim_{\mathrm{qiso}} \Bar(\CS)$.
\end{remark}

\begin{definition}\label{def:derived morita eq}
We say $\CS$ and $\DS$ are \emph{derived  Morita equivalent} if there exist bimodules $M\in \SMor_{\CS,\DS}$ and $N\in \SMor_{\DS,\CS}$ such that $M\otimes_\DS N\sim_{\mathrm{qiso}} \CS$ and $N\otimes_\CS M\sim_{\mathrm{qiso}}\DS$.  In this event we say that $M$ and $N$ are \emph{inverse derived Morita equivalences}.
\end{definition}

\begin{lemma}
Dg categories $\CS$ and $\DS$ are derived Morita equivalent if and only if there exist bimodules $M'\in \Bim_{\CS,\DS}$ and $N'\in \Bim_{\DS,\CS}$ such that $M'\otimes_\DS N'\simeq \Bar(\CS)$ and $N'\otimes_\CS M'\simeq \Bar(\DS)$.
\end{lemma}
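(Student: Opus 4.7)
The plan is to prove both implications by showing that the two notions of Morita equivalence differ only by the standard ``replace everything with its bar resolution'' procedure, together with the fact that $\Bar(\CS)$ is a counital idempotent (so $\Bar(\CS)\otimes_\CS \Bar(\CS)\simeq \Bar(\CS)$) and that the counit $\e\colon \Bar(\CS)\to \CS$ is a quasi-isomorphism. Both of these ingredients are part of the counital idempotent package recalled in the appendix, and the second is also immediate from the standard acyclicity of the bar complex.

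For the forward implication, suppose $M\in \SMor_{\CS,\DS}$ and $N\in \SMor_{\DS,\CS}$ are inverse derived Morita equivalences. I will set
\[
M':=\Bar(\CS)\otimes_\CS M\otimes_\DS \Bar(\DS),\qquad N':=\Bar(\DS)\otimes_\DS N\otimes_\CS \Bar(\CS),
\]
and compute
\begin{align*}
M'\otimes_\DS N'
&\simeq \Bar(\CS)\otimes_\CS M\otimes_\DS \Bar(\DS)\otimes_\DS N\otimes_\CS \Bar(\CS)\\
&\simeq \Bar(\CS)\otimes_\CS M\otimes_\DS N\otimes_\CS \Bar(\CS)\otimes_\CS \Bar(\CS)\\
&\simeq \Bar(\CS)\otimes_\CS M\otimes_\DS N\otimes_\CS \Bar(\CS)\\
&\simeq \Bar(\CS),
\end{align*}
where the first step absorbs a $\Bar(\DS)\otimes_\DS \Bar(\DS)\simeq \Bar(\DS)$, the second uses exactness of $N$ in the form $\Bar(\DS)\otimes_\DS N\simeq N\otimes_\CS \Bar(\CS)$ from Proposition \ref{prop:acyclic projective exact}(3), the third uses idempotency of $\Bar(\CS)$, and the final step is Lemma \ref{lemma:bar detects qiso} applied to the relation $M\otimes_\DS N\sim_{\mathrm{qiso}}\CS$ (noting that $\Bar(\CS)\otimes_\CS \CS\otimes_\CS \Bar(\CS)\simeq \Bar(\CS)$). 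The companion calculation $N'\otimes_\CS M'\simeq \Bar(\DS)$ is symmetric.

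For the reverse implication, suppose bimodules $M'\in \Bim_{\CS,\DS}$ and $N'\in \Bim_{\DS,\CS}$ are given with $M'\otimes_\DS N'\simeq \Bar(\CS)$ and $N'\otimes_\CS M'\simeq \Bar(\DS)$. I will simply take $M:=M'$ and $N:=N'$ and verify the conditions of Definition \ref{def:derived morita eq}. Exactness of $M'$ follows from the two given equivalences via
\[
\Bar(\CS)\otimes_\CS M'\simeq (M'\otimes_\DS N')\otimes_\CS M'\simeq M'\otimes_\DS(N'\otimes_\CS M')\simeq M'\otimes_\DS \Bar(\DS),
\]
which is the characterization of exactness in Proposition \ref{prop:acyclic projective exact}(3); exactness of $N'$ is symmetric. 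Finally, $M'\otimes_\DS N'\simeq \Bar(\CS)$ combined with the counit quasi-isomorphism $\e\colon \Bar(\CS)\to \CS$ gives $M'\otimes_\DS N'\sim_{\mathrm{qiso}}\CS$, and similarly for the other composite.

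I do not expect any real obstacle. The one point that requires care is justifying $\Bar(\DS)\otimes_\DS \Bar(\DS)\simeq \Bar(\DS)$ and $\e\colon \Bar(\CS)\to \CS$ being a quasi-isomorphism within the setup of the appendix; both are standard properties of the counital idempotent structure on the bar complex that the paper develops there, and they are already implicitly used in the proof of Proposition \ref{prop:acyclic projective exact}. All other manipulations are formal associativity of $\otimes$ together with the interplay between the homotopy equivalence relation $\simeq$ and the quasi-isomorphism relation $\sim_{\mathrm{qiso}}$ as captured by Lemma \ref{lemma:bar detects qiso}.
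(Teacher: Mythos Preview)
Your proof is correct and follows essentially the same approach as the paper: in both directions you invoke exactly the same ingredients (idempotence of the bar complex, the characterization of exactness from Proposition~\ref{prop:acyclic projective exact}(3), and Lemma~\ref{lemma:bar detects qiso}), and your reverse implication is identical to the paper's. The only cosmetic difference is that the paper defines $M':=\Bar(\CS)\otimes_\CS M$ and $N':=\Bar(\DS)\otimes_\DS N$ with a bar on one side only, making the forward computation one step shorter.
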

\begin{proof}
Suppose first that $M\in \SMor_{\CS,\DS}$ and $N'\in \SMor_{\DS,\CS}$ are inverse derived Morita equivalences, and let $M':= \Bar(\CS)\otimes_\CS M$ and $N':=\Bar(\DS)\otimes_\DS N$.  Then
\begin{align*}
M'\otimes_\DS N' 
&\simeq \Bar(\CS)\otimes_\CS M\otimes_\DS \Bar(\DS) \otimes_\DS N\\
&\simeq \Bar(\CS)\otimes_\CS \Bar(\CS)\otimes_\CS M\otimes_\DS N\\
& \simeq \Bar(\CS)\otimes_\CS \CS\\
&\cong \Bar(\CS)
\end{align*}
where in the second lind we used that $M$ commutes with bar complexes, and in the third we used idempotence of the bar complex and Lemma \ref{lemma:bar detects qiso}.  A similar argument shows that $N'\otimes_\CS M'\simeq \Bar(\CS)$.  This proves the ``only if'' direction.

For the converse, suppose $M'\in \Bim_{\CS,\DS}$ and $N'\in \Bim_{\DS,\CS}$ are as in the statement.  Since $\Bar(\CS)\sim_{\mathrm{qiso}} \CS$ we need only show that $M'$ and $N'$ are exact.  For $M'$, we observe
\[
\Bar(\CS)\star M'\simeq M'\star N' \star M'\simeq M'\star \Bar(\DS)
\]
hence $M'$ is exact.  A similar argument shows that $N'$ is exact, and completes the proof.
\end{proof}

\subsection{A dg category is derived Morita equivalent to its envelopes}
\label{ss:morita and envelope}

\begin{theorem}
Let $\CS$ be a dg category whose hom complexes are projective over $\k$, and let $\EB$ be one of the envelope operations $\EB\in \{\SB,\AB,\pretr\}$.  Then $\CS$ is derived Morita equivalent to $\EB(\CS)$.
\end{theorem}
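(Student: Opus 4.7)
The plan is to construct explicit inverse derived Morita equivalences between $\CS$ and $\DS := \EB(\CS)$ from the fully faithful embedding $\eta \colon \CS \hookrightarrow \DS$ provided by the monad structure of $\EB$. Let $M \in \Bim_{\CS,\DS}$ be the identity bimodule $\DS$ with its left $\DS$-action restricted along $\eta$, so $\braCket{X}{M}{\YB} := \braCket{\eta(X)}{\DS}{\YB}$; dually let $N \in \Bim_{\DS,\CS}$ be defined by $\braCket{\YB}{N}{X} := \braCket{\YB}{\DS}{\eta(X)}$. Fully faithfulness of $\eta$ then yields the canonical isomorphism $M \otimes_\DS N \cong \CS$ in $\Bim_{\CS,\CS}$ via $\braCket{X}{M \otimes_\DS N}{X'} \cong \braCket{\eta(X)}{\DS}{\eta(X')} = \braCket{X}{\CS}{X'}$.

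Next I would verify that $M$ (and symmetrically $N$) is exact in the sense of Definition \ref{def:exact etc}. Right exactness is immediate: $\bra{X} \otimes_\CS M$ is the Yoneda module $\bra{\eta(X)}_\DS$, which is projective by the Yoneda lemma. For left exactness I would examine $M \otimes_\DS \ket{\YB} \cong \braCket{\eta(-)}{\DS}{\YB}$ as a left $\CS$-module for each $\YB \in \DS$. When $\EB = \SB$ and $\YB = q^iX$ this is a grading shift of $\ket{X}_\CS$; when $\EB = \AB$ and $\YB$ is a finite direct sum of objects of $\CS$ it is a finite direct sum of Yoneda modules; when $\EB = \pretr$ and $\YB$ is a finite one-sided twisted complex it is the corresponding one-sided twisted complex of shifted Yoneda modules over $\CS$. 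Since projectivity (characterized by $\Hom(-, Z) \simeq 0$ on acyclic targets) is preserved under shifts, finite direct sums, and mapping cones of closed maps between projective modules, it is preserved under finite one-sided twisted complexes by induction on the number of summands, so $M \otimes_\DS \ket{\YB}$ is projective in all three cases.

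To compare $N \otimes_\CS M$ with $\Bar(\DS)$ I would combine the isomorphism $N \otimes_\CS \Bar(\CS) \otimes_\CS M \cong \Bar(\DS, \Obj(\CS))$ (the lemma following Definition \ref{def:relative bar cx}) with Theorem \ref{thm:bar and generation}, which gives $\Bar(\DS, \Obj(\CS)) \simeq \Bar(\DS)$ since $\Obj(\CS)$ generates $\DS$. Using Proposition \ref{prop:acyclic projective exact}(3) to commute $\Bar$ past the exact bimodules ($\Bar(\DS) \otimes_\DS N \simeq N \otimes_\CS \Bar(\CS)$ and $M \otimes_\DS \Bar(\DS) \simeq \Bar(\CS) \otimes_\CS M$), together with the idempotence $\Bar(\CS) \otimes_\CS \Bar(\CS) \simeq \Bar(\CS)$ from the appendix on counital idempotents, one computes
\[
\Bar(\DS) \otimes_\DS (N \otimes_\CS M) \otimes_\DS \Bar(\DS) \simeq N \otimes_\CS \Bar(\CS) \otimes_\CS \Bar(\CS) \otimes_\CS M \simeq N \otimes_\CS \Bar(\CS) \otimes_\CS M \simeq \Bar(\DS).
\]
By Lemma \ref{lemma:bar detects qiso} this gives $N \otimes_\CS M \sim_{\mathrm{qiso}} \Bar(\DS) \sim_{\mathrm{qiso}} \DS$, and combined with $M \otimes_\DS N \cong \CS \sim_{\mathrm{qiso}} \Bar(\CS)$, Definition \ref{def:derived morita eq} produces the claimed derived Morita equivalence with $M$ and $N$ serving as the inverse equivalences.

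The main obstacle is the verification of left exactness of $M$ and $N$: this is the one step that genuinely uses the structure of each envelope operation, and for $\EB = \pretr$ it requires the inductive preservation-of-projectivity argument for one-sided twisted complexes built from shifted Yoneda modules. The remaining steps simply repackage the homotopy theory of the relative bar complex already established in Section \ref{s:bimodules}.
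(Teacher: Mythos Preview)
Your proof is correct and reaches the same conclusion, but the route differs from the paper's in one notable way. The paper first proves a reformulation lemma: $\CS$ and $\DS$ are derived Morita equivalent if and only if there exist bimodules $M',N'$ (not assumed exact) with $M'\otimes_\DS N'\simeq \Bar(\CS)$ and $N'\otimes_\CS M'\simeq \Bar(\DS)$. With this in hand, the paper simply takes $M'=\Bar(\CS)\otimes_\CS \EB(\CS)$ and $N'=\EB(\CS)|_\CS$, and the two required homotopy equivalences drop out in one line each: $M'\otimes_{\EB(\CS)} N'\cong \Bar(\CS)$ by full faithfulness, and $N'\otimes_\CS M'\cong \EB(\CS)\otimes_\CS \Bar(\CS)\otimes_\CS \EB(\CS)\simeq \Bar(\EB(\CS))$ by Theorem~\ref{thm:bar and generation}. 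In particular, no case-by-case exactness check is needed.

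Your approach instead uses the bare restriction bimodules and verifies exactness directly for each envelope, then uses Proposition~\ref{prop:acyclic projective exact}(3) and Lemma~\ref{lemma:bar detects qiso} to finish. This is perfectly valid; indeed, your exactness verification is essentially the content hidden inside the paper's reformulation lemma (whose proof shows that $M',N'$ as above are automatically exact). What the paper's packaging buys is brevity and uniformity: the argument never branches on which $\EB$ is in play. What your approach buys is explicitness: you see concretely why the restriction bimodules are exact, via the structure of each envelope, rather than deducing it abstractly from the bar complex identities.
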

\begin{proof}
Let $M=\Bar(\CS)\otimes_\CS \EB(\CS)\in \Bim_{\CS, \EB(\CS)}$ and $N=\EB(\CS)|_{\CS}\in \Bim_{\EB(\CS),\CS}$ (the identity bimodule of $\EB(\CS)$, restricted to $\CS$ on one side).  Then
\[
M\otimes_{\EB(\CS)} N \cong \Bar(\CS)\otimes_\CS \EB(\CS)|_\CS \cong \Bar(\CS)
\]
where we are using that the double restriction ${}_\CS |\EB(\CS)|_\CS$ is isomorphic to $\CS$ since the inclusion $\CS\hookrightarrow \EB(\CS)$ is full.  In the other direction,
\[
N\otimes_\CS M \cong \EB(\CS)\otimes_\CS \Bar(\CS)\otimes_\CS \EB(\CS) \simeq \Bar(\EB(\CS))
\]
by Theorem \ref{thm:bar and generation}.
\end{proof}

\begin{example}\label{ex:Tw is not invt}
Here is a counterexample which shows that $\Tw(\CS)$ does not respect quasi-equivalence. Let $\CS$ be the dg category with one object $X$ and endomorphism algebra $\End_\CS(X)$ equal to the $2\times 2$ matrix algebra $M_{2\times 2}(\k)$ with $\Z$ gradings $\deg(e_{i,j})=i-j$ (where $e_{i,j}$ is the elementary matrix) and differential
\[
d(e_{i,j}) = [e_{2,1},e_{i,j}] = e_{2,1}e_{i,j} -(-1)^{i-j}e_{i,j}e_{2,1}.
\]
Then $X$ is contractible, with contracting homotopy $h=e_{1,2}$, but the twist $\tw_{-e_{2,1}}(X)$ is not contractible.  Indeed, the endomorphism algebra of $\tw_{-e_{2,1}}(X)$ is simply $M_{2\times 2}(\k)$ with the same grading as before, but zero differential.  Thus, $\CS$ is quasi-equivalent to the zero category, but $\Tw(\CS)$ is not.
\end{example}

\begin{remark}\label{rmk:Kar is not invt}
In this paper we have so far avoided discussing the Karoubi envelope (mostly because it is easy to include and there are no sign rules to worry about).  One may be tempted to conclude from Example \ref{ex:Tw is not invt} that the Karoubi envelope is also not a derived Morita invariant.  However, the error here is that the idempotent endomorphisms $e_{i,i}$ of $X$ are not closed, hence their ``images'' are not well defined objects of $\Kar(\CS)$.  In fact $\CS$ is already Karoubi complete.
\end{remark}

%

\appendix

\section{Counital idempotents}
\label{s:counital idempotents}
Some results on the bar complex rely on some general theory of (co)unital idempotents.  In the context of triangulated monoidal categories, (co)idempotents were studied by the author in \cite{HogancampIdempotent}.  We refer the reader to the introduction of \cite{HogancampIdempotent} for further references and context.  We will want lifts of this theory to the context of dg monoidal categories.  The main ideas are fairly straightforward consequences of results in \cite{HogancampIdempotent,HogancampConstructing}, but we include the details here for thoroughness.

This appendix is organized as follows.  In \S \ref{ss:counital objects} we review the definition and basic theory of counital objects (not necessarily idempotent) from \cite{HogancampConstructing}.  In \S \ref{ss:unital and counital idempts} we review (co)unital idempotents.  In \S \ref{ss:semiortho} we state and prove some semi-orthogonality properties of (co)unital idempotents and their complementary idempotents.  In \S \ref{ss:constructing} we recall a standard method for constructing counital idempotents (this is a dg lift of a construction in \cite{HogancampConstructing}), and finally \S \ref{ss:bar as idempt} establishes the application to bar complexes.

\subsection{Counital objects}
\label{ss:counital objects}

Assume first that $(\AS,\star,\one)$ is a $\k$-linear monoidal category. One should keep in mind that if $\AS$ were a dg monoidal category, then the constructions and results of this section should be applied to $H^0(\AS)$. We will omit all occurences the unitor and associator isomorphisms, for brevity.  We recall some definitions and results from \cite{HogancampConstructing}.

Let $C\in \AS$ be given.  We say that $\e\colon C\to \one$ is a \emph{counit} if $\e\star \id_C$ and $\id_C\star \e$ are retracts, i.e.~they have right inverses.  A \emph{counital object} in $\AS$ is an object $C\in \AS$ equipped with a counit $\e\colon C\to \one$.

\begin{theorem}\label{thm:counit order}
Let $(C_1,\e_1)$, $(C_2,\e_2)$ be counital objects in $\AS$.  The following are equivalent:
\begin{enumerate}
\item[(1L)] $C_1$ is a retract of $X\star C_2$ for some $X\in \AS$.
\item[(1R)] $C_1$ is a retract of $C_2\star X$ for some $X\in \AS$.
\item[(2L)] the map $\id_{C_1}\star \e_2\colon C_2\star C_2\to C_1$ is a retraction.
\item[(2R)] the map $\e_2\star \id_{C_1} \colon C_2\star C_2\to C_1$ is a retraction.
\item[(3)] there exists a map $\theta\colon C_1\to C_2$ such that $\e_2\circ \theta = \e_1$.
\end{enumerate}
\end{theorem}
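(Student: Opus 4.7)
The plan is to run the cycle
\[
(1L)\Leftarrow(2L)\Leftrightarrow(3)\Leftrightarrow(2R)\Rightarrow(1R),
\]
and close it by proving $(1L)\Rightarrow(2L)$ (the remaining step $(1R)\Rightarrow(2R)$ follows by the mirror-symmetric argument). The implications $(2L)\Rightarrow(1L)$ and $(2R)\Rightarrow(1R)$ are immediate by taking $X=C_1$, so the whole content is in the three reversed directions.

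The main step is $(1L)\Rightarrow(2L)$. Assume $i\colon C_1\to X\star C_2$ and $p\colon X\star C_2\to C_1$ witness $C_1$ as a retract of $X\star C_2$. Since $(C_2,\e_2)$ is counital, there exists a right inverse $s_R\colon C_2\to C_2\star C_2$ of $\id_{C_2}\star \e_2$. I will then define
\[
\tau \;:=\; (p\star \id_{C_2})\circ (\id_X\star s_R)\circ i \;\colon\; C_1\longrightarrow C_1\star C_2,
\]
and verify directly that
\[
(\id_{C_1}\star\e_2)\circ \tau \;=\; p\circ (\id_X\star \id_{C_2}\star \e_2)\circ (\id_X\star s_R)\circ i \;=\; p\circ i \;=\;\id_{C_1}.
\]
This exhibits the required section of $\id_{C_1}\star \e_2$. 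Nothing beyond functoriality of $\star$ and the defining identity of $s_R$ is used; the only delicate point is choosing the correct one-sided splitting ($s_R$, right inverse to $\id_{C_2}\star \e_2$, rather than $s_L$). The mirror argument, with $s_L$ a right inverse of $\e_2\star \id_{C_2}$, gives $(1R)\Rightarrow(2R)$.

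For $(2L)\Rightarrow(3)$, given a section $\tau\colon C_1\to C_1\star C_2$ of $\id_{C_1}\star\e_2$, set
\[
\theta \;:=\; (\e_1\star \id_{C_2})\circ \tau \;\colon\; C_1\longrightarrow C_2.
\]
Then
\[
\e_2\circ \theta \;=\; (\e_1\star \e_2)\circ \tau \;=\; \e_1\circ (\id_{C_1}\star \e_2)\circ \tau \;=\; \e_1,
\]
using the interchange law and naturality of the unit isomorphism. The same construction starting from a right-handed section $\tau'$ gives $(2R)\Rightarrow(3)$.

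Finally, for $(3)\Rightarrow(1L)$ (and similarly $(3)\Rightarrow(1R)$), let $s\colon C_1\to C_1\star C_1$ be a right inverse of $\id_{C_1}\star\e_1$, which exists by the counital hypothesis on $C_1$. Take $X=C_1$ and define
\[
i\;:=\;(\id_{C_1}\star \theta)\circ s\;\colon\;C_1\to C_1\star C_2,\qquad p\;:=\;\id_{C_1}\star \e_2\;\colon\; C_1\star C_2\to C_1.
\]
Then
\[
p\circ i\;=\;(\id_{C_1}\star \e_2)\circ (\id_{C_1}\star \theta)\circ s\;=\;(\id_{C_1}\star (\e_2\circ \theta))\circ s\;=\;(\id_{C_1}\star \e_1)\circ s\;=\;\id_{C_1},
\]
which exhibits $C_1$ as a retract of $C_1\star C_2$. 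Using instead a right inverse of $\e_1\star \id_{C_1}$ and placing $\theta$ on the left yields $(3)\Rightarrow(1R)$. No step presents a serious obstacle; the only thing to be careful about is consistently pairing each splitting with the correctly-sided counit composition, and we never need to invoke any dg structure since the statement is about a plain $\k$-linear monoidal category (to apply it to a dg monoidal category $\AS$ one works in $H^0(\AS)$).
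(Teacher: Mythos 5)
Your proof is correct and takes essentially the same approach as the paper: you build explicit sections using functoriality of $\star$, the interchange law, and the defining splittings coming from the counital hypotheses, exactly as in the paper's commuting-diagram argument (your $\tau=(p\star\id_{C_2})\circ(\id_X\star s_R)\circ i$ is the paper's section $ (r\star\id_{C_2})\circ(\id_X\star\id_{C_2}\star\e_2)^{-1}\circ r^{-1}$, and your $\theta$ and $i$ match the paper's $\theta$ and $\sigma$). One small remark: the step you label $(3)\Rightarrow(1L)$ in fact proves the stronger $(3)\Rightarrow(2L)$, since your $p$ is exactly $\id_{C_1}\star\e_2$ — which is what is needed to close the cycle anyway.
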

\begin{proof}
We will sketch the proof, referring to \cite[\S 2.2]{HogancampConstructing} for details.  The implication (2L)$\Leftarrow$(1L) is clear.  For the implication (2L)$\Leftarrow$(1L), observe that if $C_1$ is a retract of $X\star C_2$ then consider the diagram
\[
\begin{tikzpicture}
\node (a) at (0,0) {$X\star C_2\star C_2$};
\node (b) at (5,0) {$X\star C_2$};
\node (c) at (0,-2) {$C_1\star C_2$};
\node (d) at (5,-2) {$C_1$};
\path[-stealth,very thick]
(a) edge node[above] {$\id_X\star \id_{C_2}\star \e_2$} (b)
(c) edge node[above] {$\id_{C_1}\star \e_2$} (d)
(a) edge node[left] {$r\star \id_{C_2}$} (c)
(b) edge node[left] {$r$} (d);
\end{tikzpicture}
\]
where $r$ is a chosen retraction $X\star C_2\to C_1$.   Every arrow but the bottom is a retraction, hence the bottom is a retraction (a retract of a retraction is a retraction) with section given by the composition $r\star \id_{C_2}\circ (id_X\star \id_{C_2}\star \e_2)\inv\circ r\inv$ (where here the superscript means right inverse, i.e.~section).  This shows the equivalence of (L1) and (L2).  

Now for (2L)$\Rightarrow$(3), consider the diagram
\[
\begin{tikzpicture}
\node (a) at (0,2) {$C_1\star C_2$};
\node (b) at (3,2) {$C_2$};
\node (c) at (0,0) {$C_1$};
\node (d) at (3,0) {$\one$};
\path[-stealth,very thick]
(a) edge node[above] {$\e_1\star \id_{C_2}$} (b)
(c) edge node[left]  {$(\id_{C_1}\star \e_2)\inv$} (a)
(c) edge node[below] {$\e_1$} (d)
(b) edge node[right] {$\e_2$} (d);
\path[-stealth,dashed]
(c) edge node[shift={(-.2,.2)}] {$\theta$} (b);
\end{tikzpicture}
\]
where as before $(\id_{C_1}\star \e_2)\inv$ means right inverse, which exists since we assume (2L).  We define $\theta$ by commutativity of the top left triangle, and the relation $\e_2\circ \theta=\e_1$ follows by commutativity of the square.  Thus (2L) implies (3).  Now, assume (3).  To show (2L), consider the diagram
\[
\begin{tikzpicture}
\node (a) at (0,2) {$C_1\star C_1$};
\node (b) at (3,2) {$C_1\star C_2$};
\node (c) at (0,0) {$C_1$};
\node (d) at (3,0) {$C_1$};
\path[-stealth,very thick]
(a) edge node[above] {$\id_{C_1}\star \theta$} (b)
(c) edge node[left]  {$(\id_{C_1}\star \e_1)\inv$} (a)
(c) edge node[below] {$\id_{C_1}$} (d)
(b) edge node[right] {$\id_{C_1}\star \e_2$} (d);
\path[-stealth,dashed]
(c) edge node[shift={(-.2,.2)}] {$\sigma$} (b);
\end{tikzpicture}
\]
where we are defining $\sigma$ by commutativity of the top left triangle.  The commutativity of the square tells us that $\sigma$ is a section of $\id_{C_1}\star \e_2$, hence (3) implies (2L).  This completes the proof that (1L), (2L), and (3) are equivalent.  The equivalence of (1R), (2R), (3) follows by symmetry.
\end{proof}

\begin{definition}\label{def:counit order}
Given counital objects $C_1,C_2\in \AS$ we write $C_1\leq C_2$ if either of the any conditions of Theorem \ref{thm:counit order} are satisfied.
\end{definition}

\begin{remark}
Note that $C_1\leq C_2$ depends only on the objects $C_i$ and not the specific choice of counits, by condition (1L) from Theorem \ref{thm:counit order}.
\end{remark}

\subsection{Unital and counital idempotents}
\label{ss:unital and counital idempts}
In this section we let $\AS$ denote a dg monoidal category.  

\begin{definition}\label{def:idempotent}
A \emph{counital idempotent} in $\AS$ is an object $P\in \AS$ equipped with a closed degree zero morphism $\e\colon P\to \one$ such that $\e\star \id_P$ and $\id_P\star \e$ are homotopy equivalences $P^{\star 2}\to P$.  A \emph{unital idempotent} in $\AS$ is an object $Q\in \AS$ equipped with a closed degree zero morphism $\eta\colon \one\to Q$ such that $\eta\star \id_Q$ and $\id_Q\star \eta$ are homotopy equivalences $Q\to Q^{\star 2}$.
%
\end{definition}

\begin{proposition}\label{prop:complements}
The following are equivalent.
\begin{enumerate}
\item $P$ is a counital idempotent in $\AS$ and $Q\simeq \Cone(P\to \one)$.
\item $Q$ is a unital idempotent in $\AS$ and $P\simeq \mathrm{Cocone}(\one\to Q)$.
\item there is a homotopy equivalence $\one\simeq (P\to Q)$ and $P\star Q\simeq 0\simeq Q\star P$.
\end{enumerate}
\end{proposition}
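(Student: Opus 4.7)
The plan is to establish the cyclic chain of implications (1) $\Rightarrow$ (3) $\Rightarrow$ (2), and then invoke the evident duality (exchanging counits with units, cones with cocones, and the roles of $P,Q$) to deduce (2) $\Rightarrow$ (1). Equivalently, (2) $\Rightarrow$ (1) follows by passing to $\AS^{\op}$, where a unital idempotent becomes a counital idempotent and $\mathrm{Cocone}$ becomes $\Cone$.

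For (1) $\Rightarrow$ (3), assume $(P,\e)$ is a counital idempotent and set $Q := \Cone(P \xrightarrow{\e} \one)$. Since $\star$ is a dg bifunctor it commutes with the formation of cones (cf.~Remark \ref{rmk:pretr and tensor}), so
\[
P \star Q \;\simeq\; \Cone\bigl(\id_P \star \e\colon P\star P \to P\bigr),
\qquad
Q \star P \;\simeq\; \Cone\bigl(\e\star \id_P\colon P\star P \to P\bigr).
\]
Both $\id_P\star \e$ and $\e\star \id_P$ are homotopy equivalences by hypothesis, so their cones are contractible, giving $P\star Q \simeq 0\simeq Q\star P$. The homotopy equivalence $\one \simeq (P \to Q)$ is then built into the definition of the cone: writing $Q$ as the extension $(q^{-\iota}P \to \one)$ exhibits $\one$ dually as the cocone of the projection $Q \to q^{-\iota}P[\iota] \cong P$, which is precisely the statement that $\one$ is the asserted extension of $P$ by $Q$.

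For (3) $\Rightarrow$ (2), assume $\one \simeq (P \to Q)$ and $P\star Q \simeq 0 \simeq Q\star P$. The extension structure provides a canonical closed degree-zero map $\eta\colon \one \to Q$ (projection onto the $Q$-summand of the twisted complex) whose cocone recovers $P$, so it suffices to show $(Q,\eta)$ is a unital idempotent. Tensoring the extension $\one \simeq (P \to Q)$ on the right with $Q$ and using $P \star Q \simeq 0$ gives
\[
Q \;\simeq\; \one \star Q \;\simeq\; (P \star Q \to Q \star Q) \;\simeq\; Q\star Q,
\]
and unwinding the definitions identifies the resulting homotopy equivalence with $\id_Q \star \eta$. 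The analogous computation after tensoring on the left shows $\eta \star \id_Q$ is a homotopy equivalence, so $(Q,\eta)$ is unital idempotent. The complementary identification $P \simeq \mathrm{Cocone}(\eta)$ is immediate from the extension structure of $\one$.

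The main obstacle is the bookkeeping of shifts by $q^{\pm \iota}$ and of signs that arise when realizing $\one$ as a concrete twisted complex in $\pretr(\AS)$ and when simplifying twisted complexes of the form $(A \to B)$ with $A \simeq 0$ down to $B$; all of these are direct applications of the tensor-product formulas in Remark \ref{rmk:pretr and tensor} together with the dg-functoriality of $\star$, but they must be carried out carefully enough to yield genuine homotopy equivalences rather than mere quasi-isomorphisms. Once this is done the proof reduces to the two displayed cone/tensor calculations above.
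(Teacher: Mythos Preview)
Your argument is correct and complete; the paper itself leaves this proposition as an exercise, so there is nothing to compare against beyond confirming that your cyclic scheme (1)$\Rightarrow$(3)$\Rightarrow$(2) plus duality for (2)$\Rightarrow$(1) is a valid solution. One small slip: when you tensor the extension $\one\simeq(P\to Q)$ on the \emph{right} by $Q$, the induced map $Q\cong \one\star Q\to Q\star Q$ is $\eta\star\id_Q$, not $\id_Q\star\eta$ (and dually for tensoring on the left); you have these two labels swapped, though the conclusion is unaffected.
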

\begin{proof}
Exercise.
\end{proof}

\begin{definition}\label{def:complements}
If any of the equivalent statements of Proposition \ref{prop:complements} holds, then we say that $Q$ and $P$ are complementary idempotents, and write $Q\simeq P^c$ and $P\simeq Q^c$.
\end{definition}

Many properties of counital idempotents may be transferred to their complements, and vice versa.

\begin{lemma}\label{lemma:complement}
Below, let $P,P_1,P_2$ be arbitrary counital idempotents in $\AS$.  We  have the following elementary properties:
\begin{enumerate}
\item $(P^c)^c\simeq P$.
\item $P\star X\simeq X$ if and only if $P^c\star X\simeq 0$, with similar statements for $X$ on the left.
\item $P^c\star X\simeq X$ if and only if $P\star X\simeq 0$, with similar statements for $X$ on the left.
\item $P_1\simeq P_2$ if and only if $P_1^c\simeq P_2^c$.
\item $P_1\star P_2 \simeq P_2\star P_1$ if and only if $P_1^c\star P_2^c\simeq P_2^c\star P_1^c$.
\end{enumerate}
\end{lemma}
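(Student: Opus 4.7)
The plan is to deduce all five properties from Proposition~\ref{prop:complements}(3), which characterizes a complementary pair $(P, P^c)$ by the two-term extension $\one \simeq (P \to P^c)$ together with the semi-orthogonality relations $P \star P^c \simeq 0 \simeq P^c \star P$.

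Part (1) is immediate: the characterization in Proposition~\ref{prop:complements} is symmetric under interchanging the counital idempotent $P$ with its unital-idempotent complement $P^c$, so taking the complement is an involution. For part (2), I would tensor $\one \simeq (P \to P^c)$ on the right with $X$ to obtain $X \simeq (P \star X \to P^c \star X)$; if $P^c \star X \simeq 0$ this extension collapses to its bottom summand, giving $X \simeq P \star X$, while conversely $P \star X \simeq X$ implies $P^c \star X \simeq P^c \star P \star X \simeq 0$ using $P^c \star P \simeq 0$. The left-sided versions are obtained by tensoring on the left. Part (3) follows by the same argument with the roles of $P$ and $P^c$ exchanged, which is legitimate since Proposition~\ref{prop:complements}(3) treats the two members of the pair symmetrically. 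For part (4), given $P_1 \simeq P_2$, tensoring $\one \simeq (P_2 \to P_2^c)$ on the right with $P_1^c$ gives $P_1^c \simeq (P_2 \star P_1^c \to P_2^c \star P_1^c)$, and since $P_2 \star P_1^c \simeq P_1 \star P_1^c \simeq 0$ this collapses to $P_1^c \simeq P_2^c \star P_1^c$. The symmetric computation tensoring $\one \simeq (P_1 \to P_1^c)$ on the left with $P_2^c$ yields $P_2^c \simeq P_2^c \star P_1^c$, hence $P_1^c \simeq P_2^c$; the converse is obtained by running the same argument on the unital idempotents $P_i^c$ and invoking (1).

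Part (5) is the main obstacle. Its converse follows from the forward direction applied to the unital idempotents $P_i^c$ via (1), so I would focus on the forward implication. My approach is to propagate commutativity through the cone construction in two stages. The first stage derives the intermediate equivalence $P_1 \star P_2^c \simeq P_2^c \star P_1$ by realizing both sides as cones of the counit-tensored maps $P_1 \star P_2 \xrightarrow{\id_{P_1} \star \e_2} P_1$ and $P_2 \star P_1 \xrightarrow{\e_2 \star \id_{P_1}} P_1$ respectively; the delicate point is that these two maps must be intertwined by some choice of equivalence $P_1 \star P_2 \simeq P_2 \star P_1$, which is a ``canonicity of counits'' statement amounting to the essentially unique way two counital structures on homotopy-equivalent objects are related (a dg lift of Theorem~\ref{thm:counit order}). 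The second stage iterates the same argument with $P_2^c$ in place of $P_2$ to produce $P_1^c \star P_2^c \simeq P_2^c \star P_1^c$ as cones of parallel maps with domain $P_1 \star P_2^c \simeq P_2^c \star P_1$ and target $P_2^c$. The main technical burden is verifying the canonicity required to pass between cones of abstractly-equivalent maps.
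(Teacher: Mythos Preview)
Your arguments for (1)--(4) are correct and match the paper's approach closely. The paper says (1) follows from Gaussian elimination (equivalent to your symmetry observation), defers (2) and (3) to a reference (your cone-collapse arguments are exactly the standard proofs), and derives (4) from (2) and (3) via the same chain $P_1\simeq P_2 \Rightarrow P_1^c\star P_2\simeq 0 \Rightarrow P_1^c\star P_2^c\simeq P_1^c$ that you spell out.

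For (5), the paper's proof in fact \emph{omits this statement entirely}, so your sketch already goes further than the paper does. Your two-stage cone propagation is the right idea, and you have correctly located the nontrivial point: one needs to know that some homotopy equivalence $P_1\star P_2\simeq P_2\star P_1$ intertwines the two maps $\id_{P_1}\star\e_2$ and $\e_2\star\id_{P_1}$ into $P_1$. This does not follow from Theorem~\ref{thm:counit order} alone (that theorem produces a comparison map but says nothing about uniqueness or invertibility). The cleanest way to close this gap is via the semi-orthogonality results that appear \emph{after} this lemma in the appendix: by Proposition~\ref{prop:semiortho P and A} applied with $M=P_1\star P_2$ and $N=P_1$, post-composition with $\e_2\star\id_{P_1}$ gives a homotopy equivalence $\Hom(P_1\star P_2,\,P_2\star P_1)\to\Hom(P_1\star P_2,\,P_1)$, and likewise for $\id_{P_1}\star\e_2$, which yields the required intertwining. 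None of those later results actually depend on part (5) of this lemma, so there is no circularity; but strictly following the paper's ordering you would need to either reorder or supply an independent argument.
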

\begin{proof}
(1) follows from Gaussian elimination.  (2) and (3) are proven in \cite{}.  (4) is a consequence of (2) and (3).  Indeed $P\simeq P'$ implies $P\star P'\simeq P'$, which implies $P_1^c\star P_2\simeq 0$, which implies $P_1^c\star P_2^c\simeq P_1^c$.  By symmetry we have $P_1^c\star P_2^c\simeq P_2^c$ as well, hence $P_1^c\simeq P_2^c$.  The converse is proven similarly.
\end{proof}


\begin{theorem}\label{thm:counital idempotent order}
For counital idempotents $P_1,P_2\in \AS$ the following are equivalent.
\begin{enumerate}
\item $P_1\leq P_2$.
\item $P_1\star P_2\simeq P_2$.
\item $P_2\star P_1\simeq P_1$.
\end{enumerate}
\end{theorem}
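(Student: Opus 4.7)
The plan is to derive the theorem from Theorem~\ref{thm:counit order} together with the basic facts about complementary idempotents collected in Proposition~\ref{prop:complements} and Lemma~\ref{lemma:complement}, working throughout in the homotopy category $H^0(\AS)$ so that the word ``retract'' consistently means ``homotopy retract'' and the non-dg Theorem~\ref{thm:counit order} applies. The underlying principle is very simple: for a counital idempotent $P$ with complement $P^c$, the vanishings $P \star P^c \simeq 0 \simeq P^c \star P$ from Proposition~\ref{prop:complements} allow us to detect $P$-absorption on the nose via $P^c$-annihilation.

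For $(1) \Rightarrow (3)$: starting from $P_1 \leq P_2$, condition (1R) of Theorem~\ref{thm:counit order} supplies a homotopy retraction exhibiting $P_1$ as a retract of $P_2 \star X$ for some $X \in \AS$. Tensoring on the left with $P_2^c$ and using $P_2^c \star P_2 \simeq 0$ shows that $P_2^c \star P_1$ is a retract of a contractible object, so $P_2^c \star P_1 \simeq 0$; Lemma~\ref{lemma:complement}(2) then delivers $P_2 \star P_1 \simeq P_1$, which is (3). The implication $(1) \Rightarrow (2)$ is entirely symmetric: one uses condition (1L) of Theorem~\ref{thm:counit order} to write $P_1$ as a retract of $X \star P_2$, tensors with $P_2^c$ on the right to get $P_1 \star P_2^c \simeq 0$, and applies the right-sided version of Lemma~\ref{lemma:complement}(2).

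The converses $(3) \Rightarrow (1)$ and $(2) \Rightarrow (1)$ are immediate tautologies once phrased correctly: if $P_2 \star P_1 \simeq P_1$, then $P_1$ is trivially a homotopy retract of $P_2 \star P_1$, i.e.\ of $P_2 \star X$ with $X = P_1$, which is condition (1R) of Theorem~\ref{thm:counit order} and therefore gives $P_1 \leq P_2$. The implication $(2) \Rightarrow (1)$ works the same way but invokes (1L) instead.

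There is no serious obstacle in this proof; the theorem is really a bookkeeping exercise. The only subtlety worth flagging is that everything happens up to homotopy, so one must verify that the various ``retract of'' characterizations in Theorem~\ref{thm:counit order} remain valid when applied to the images of $P_1, P_2$ in $H^0(\AS)$, where both become ordinary counital objects. Once this is granted, the whole argument amounts to threading the standard identities $P \star P^c \simeq 0$ through the equivalences supplied by Theorem~\ref{thm:counit order} and Lemma~\ref{lemma:complement}.
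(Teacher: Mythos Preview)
Your proof is correct and follows essentially the same route as the paper's: both arguments pass to $H^0(\AS)$, use one of the equivalent retract conditions of Theorem~\ref{thm:counit order}, tensor with the complement $P_2^c$ to kill the relevant product, and then invoke Lemma~\ref{lemma:complement}(2) to upgrade the retraction to a homotopy equivalence. The only cosmetic difference is that the paper invokes condition (2L)/(2R) of Theorem~\ref{thm:counit order} (i.e.\ $P_1$ a retract of $P_1\star P_2$) whereas you invoke (1L)/(1R); and you spell out the converse implications $(2)\Rightarrow(1)$ and $(3)\Rightarrow(1)$ explicitly, while the paper leaves them implicit in the appeal to Theorem~\ref{thm:counit order}. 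Note also that, like the paper's own proof, your argument actually establishes $P_1\star P_2\simeq P_1$ in item~(2), suggesting that the ``$P_2$'' on the right-hand side of the stated item~(2) is a typo for ``$P_1$''.
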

\begin{proof}
If $P_1$ is a retract of $P_1\star P_2$ in $H^0(\AS)$, then $P_1\star P_2^c$ is a retract of $P_1\star P_2\star P_2^c\simeq 0$ in $H^0(\AS)$, hence $P_1\star P_2^c\simeq 0$, hence $P_1\star P_2\simeq P_1$.  Similarly, if $P_1$ is a retract of $P_2\star P_1$ then in fact $P_2\star P_1\simeq P_1$.  Now this theorem follows from Theorem \ref{thm:counit order}.\end{proof}

%
%

\subsection{Semi-orthogonality}
\label{ss:semiortho}
In this section we fix $\AS$, a dg monoidal category, and $\MS$, a dg category with a dg functor $\MS\otimes \AS\buildrel\star\over \to \MS$ making $\MS$ into a \emph{right $\AS$-module category}  (as usual, the associativity and unit isomorphisms here should satisfy some appropriate coherence conditions, and should be included as part of the structure of an $\AS$-module category).

We will denote the image of an object $M\in \MS$ by $[M]$.  We say $M$ is a \emph{homotopy retract of $M'$} if $[M]$ is a retract of $[M']$ in $H^0(\MS)$.

If $\e\colon P\to \one$ is a counital idempotent in $\AS$ then we let $\im(P),\ker(P)\subset \MS$ denote the full subcategories of $\MS$ consisting of objects $M\in \MS$ such that $M\star P\simeq M$, respectively $M\star P\simeq 0$.

\begin{lemma}\label{lemma:im P}
Let $\MS$ be a right $\AS$-module category and let $P$ be a counital idempotent in $\AS$, with counit $\e$.   For an object $M\in \MS$ the following are equivalent:
\begin{enumerate}
\item $M$ is a homotopy retract of an object in $\im(P)$.
\item $\id_M\star \e$ is a homotopy equivalence $M\star P\to M$.
\item $M\in \im(P)$.
\end{enumerate}
\end{lemma}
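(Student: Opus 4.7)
The plan is to prove the chain $(2)\Rightarrow(3)\Rightarrow(1)\Rightarrow(2)$. The first two implications are immediate from the definitions: if $\id_M\star\e$ is a homotopy equivalence, then $M\star P\simeq M$, so $M\in\im(P)$; and any $M\in\im(P)$ is trivially a homotopy retract of itself. The content is therefore $(1)\Rightarrow(2)$, which I would deduce by first establishing $(3)\Rightarrow(2)$ and then propagating it through the retract structure.

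For $(3)\Rightarrow(2)$, suppose $N\in\im(P)$ and fix any homotopy equivalence $\phi\colon N\star P\to N$. I would form the square
\[
\begin{tikzcd}
N\star P\star P \ar[r,"\phi\star\id_P"] \ar[d,"\id_N\star\id_P\star\e"'] & N\star P \ar[d,"\id_N\star\e"] \\
N\star P \ar[r,"\phi"'] & N
\end{tikzcd}
\]
and check that it commutes: both legs equal $\phi\star\e$ via the bifunctoriality of $\star$. Three of the four arrows are already homotopy equivalences, namely $\phi\star\id_P$ and $\phi$ (since $\phi$ is and $\star$ preserves homotopy equivalences in each argument) and $\id_N\star\id_P\star\e$ (since $\id_P\star\e\colon P\star P\to P$ is a homotopy equivalence by the counital idempotency of $P$). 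The two-out-of-three property then forces $\id_N\star\e$ to be a homotopy equivalence.

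For $(1)\Rightarrow(2)$, assume $i\colon M\to N$ and $r\colon N\to M$ are closed degree zero morphisms with $r\circ i\simeq \id_M$ and $N\in\im(P)$. Choose a homotopy inverse $j\colon N\to N\star P$ to $\id_N\star\e$ provided by $(3)\Rightarrow(2)$, and define
\[
s := (r\star\id_P)\circ j\circ i \;\colon\; M\longrightarrow M\star P.
\]
I would then verify that $s$ is a two-sided homotopy inverse to $\id_M\star\e$. Using the interchange law twice one shows $r\circ(\id_N\star\e)=r\star\e=(\id_M\star\e)\circ(r\star\id_P)$ and similarly $i\circ(\id_M\star\e)=(\id_N\star\e)\circ(i\star\id_P)$. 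Combining these with $(\id_N\star\e)\circ j\simeq \id_N$ and $j\circ(\id_N\star\e)\simeq \id_{N\star P}$ yields
\[
(\id_M\star\e)\circ s \;\simeq\; r\circ i \;\simeq\; \id_M,
\qquad
s\circ(\id_M\star\e) \;\simeq\; (r\circ i)\star\id_P \;\simeq\; \id_{M\star P}.
\]

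The substance of the argument is entirely formal, so I expect no serious obstacle; the only place requiring care is the bookkeeping for the interchange law when rewriting mixed compositions such as $r\circ(\id_N\star\e)$ as $r\star\e$, and ensuring the chosen homotopy inverses are spliced together in a way that survives on both sides. No signs enter (all the maps involved are degree zero and closed), so the computation reduces to diagram-chasing at the level of $H^0(\MS)$.
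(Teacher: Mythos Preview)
Your proof is correct, but it takes a different route from the paper's. The paper proves $(1)\Rightarrow(2)$ in one stroke using the complementary idempotent $P^c=\Cone(\e)$: if $M$ is a homotopy retract of $M'\in\im(P)$, then $M\star P^c$ is a homotopy retract of $M'\star P^c$, and the latter is contractible (since $M'\star P^c\simeq M'\star P\star P^c\simeq 0$); hence $\Cone(\id_M\star\e)\cong M\star P^c\simeq 0$, and $\id_M\star\e$ is a homotopy equivalence. Your argument instead first proves the auxiliary step $(3)\Rightarrow(2)$ via a two-out-of-three square, and then transports the homotopy inverse through the retract data by hand. The paper's approach is shorter and more conceptual, leaning on the cone and the complement; yours is more elementary in that it never mentions $P^c$ or cones and works purely with the interchange law at the level of $H^0(\MS)$, which would still go through in a setting where complements or mapping cones are not available.
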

\begin{proof}
Suppose $M$ is a homotopy retract of $M'\in \im(P)$. Then $M\star P^c$ is a homotopy retract of $M'\star P^c$, which is contractible since $M'\in \im(P)$.  Since $P^c=\Cone(\e)$ this implies
\[
0\simeq M\star P^c = M\star \Cone(\e) \cong \Cone(\id_M\star \e),
\]
hence $\id_M\star \e$ is a homotopy equivalence.  This proves that (1) implies (2).  (2) obviously implies (3), which obviously implies (1).
\end{proof}

\begin{remark}
There is an obvious analogue of Lemma \ref{lemma:im P} for $\ker(P)$, which holds by swapping the roles of $P$ and $P^c$.
\end{remark}
%
%

The following lemma says that $\im(P)$ and $\ker(P)$ are \emph{semiorthogonal complements}.

\begin{lemma}\label{lemma:semiortho P and Pc}
Let $\MS$ be a right $\AS$-module category, and let $\e\colon P\to \one$ be a counital idempotent in $\AS$.  Then $\im(P)^\perp= \ker(P)$ and ${}^\perp\ker(P)=\im(P)$.  
\end{lemma}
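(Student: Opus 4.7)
The plan is to reduce everything to the decomposition $\one \simeq (P \to Q)$ from Proposition \ref{prop:complements}, together with the complement relations $P \star Q \simeq 0 \simeq Q \star P$. Tensoring on the right with any $M \in \MS$ identifies $M \star Q$ with $\Cone(\id_M \star \e)$, so $M \in \im(P)$ is equivalent to $M \star Q \simeq 0$, and symmetrically $N \in \ker(P)$ is equivalent to $N \star P \simeq 0$. I will interpret the orthogonals as $\im(P)^\perp = \{N : \Hom_\MS(M,N)\simeq 0 \text{ for all } M\in \im(P)\}$, and symmetrically for ${}^\perp\ker(P)$.

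First I would establish the semiorthogonality underlying both containments $\ker(P)\subseteq \im(P)^\perp$ and $\im(P)\subseteq {}^\perp\ker(P)$: for $M\in \im(P)$ and $N\in \ker(P)$, any closed $f\colon M\to N$ is exact. Indeed, bifunctoriality of $\star$ gives
\[
f \circ (\id_M \star \e) = \pm (\id_N \star \e) \circ (f \star \id_P)
\]
up to a Koszul sign. The right-hand side factors through $N \star P \simeq 0$, so $f \star \id_P$ is exact, and hence so is the whole composition. Since $\id_M \star \e$ is a homotopy equivalence (by Lemma \ref{lemma:im P}), $f$ itself is exact, so $\Hom_\MS(M,N)$ is acyclic.

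For the reverse inclusion $\im(P)^\perp \subseteq \ker(P)$, I would argue as follows. If $N \in \im(P)^\perp$, then $N \star P \in \im(P)$, so $\Hom(N \star P, N)\simeq 0$ and in particular the canonical map $\id_N \star \e \colon N \star P \to N$ is nullhomotopic. Tensoring on the right with $P$, the map $\id_N \star (\e \star \id_P) \colon N \star P \star P \to N \star P$ is then simultaneously nullhomotopic and a homotopy equivalence (the latter because $\e \star \id_P$ is one, by Definition \ref{def:idempotent}), which forces $N \star P \simeq 0$, i.e., $N \in \ker(P)$. The symmetric argument, applied to the unit $\eta \colon \one \to Q$ of the complementary unital idempotent and using $M \star Q \in \ker(P)$, proves ${}^\perp\ker(P) \subseteq \im(P)$.

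The main points requiring care are the Koszul sign bookkeeping in the bifunctoriality identity of the semiorthogonality step, and verifying the implication ``nullhomotopic $+$ homotopy equivalence $\Rightarrow$ source and target contractible'' at the dg rather than merely triangulated level. The latter is essentially immediate: a morphism class in the homotopy category which is simultaneously zero and invertible can only exist between zero objects, so $N\star P \simeq 0$ follows.
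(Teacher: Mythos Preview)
Your proof is correct. A small caution on the forward step: arguing elementwise that each closed $f$ is exact only yields acyclicity of $\Hom_\MS(M,N)$, whereas the statement asks for contractibility ($\simeq 0$). Your argument upgrades immediately once phrased as a factorization of chain maps: precomposition with $\id_M\star\e$ is a homotopy equivalence $\Hom_\MS(M,N)\to\Hom_\MS(M\star P,N)$, and your naturality identity shows this equals the composite through $\Hom_\MS(M\star P,N\star P)\simeq 0$, hence is null-homotopic; a null-homotopic homotopy equivalence forces the source to be contractible. The paper runs the same kind of argument but with a different factorization, sending $f\mapsto (\id_N\star\eta)\circ f\circ(\id_M\star\e)$ and observing this equals $f\star(\eta\circ\e)$, which is null-homotopic since $\eta\circ\e$ is a composition of consecutive maps in a distinguished triangle.

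For the reverse inclusions your route genuinely differs from the paper's. You show $N\in\im(P)^\perp$ forces $\id_N\star\e$ to be null-homotopic, then tensor by $P$ once more to produce a map that is simultaneously null-homotopic and a homotopy equivalence, hence $N\star P\simeq 0$. The paper instead argues on the other side: for $M\in{}^\perp\ker(P)$ it uses $\Hom_\MS(M,M\star P^c)\simeq 0$ to lift $\id_M$ along post-composition with $\id_M\star\e$, exhibiting $M$ as a homotopy retract of $M\star P$, and then invokes Lemma~\ref{lemma:im P}. Your argument is a bit more self-contained (it avoids the retract characterization), while the paper's makes the connection to Lemma~\ref{lemma:im P} explicit; both are short and either one dualizes to give the remaining inclusion.
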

Here we are using the notions of \emph{left} and \emph{right orthogonal}.  In elementary terms, $M\in \im(P)$ iff $\Hom_\MS(M,N)\simeq 0$ for all $N\in \ker(P)$, and $N\in \ker(P)$ iff $\Hom_\MS(M,N)\simeq 0$ for all $M\in \im(P)$.
\begin{proof}
Let $P^c$ be the complement of $P$ with unit $\eta\colon \one\to P^c$.  We claim that if $M\in \im(P)$ and $N\in \ker(P)$ then
\begin{equation}\label{eq:zero he}
\Hom_\MS(M,N)\to \Hom_\MS(M\star P,N\star Q)  \ , \qquad f\mapsto (\id_N\star \eta)\circ (f\star \id_\one)\circ (\id_M\star \e)
\end{equation}
is a homotopy equivalence. This follows since $\id_M\star \e$ and $\id_N\star \eta$ are homotopy equivalences by Lemma \ref{lemma:im P} (and the obvious dual version for unital idempotents).

Now, the map $\eta\circ \e$ is null-homotopic, being the composition of two consecutive maps in an exact triangle
\[
P\to \one \to \Cone(P\to \one).
\]
Thus, if $M\in \im(P)$ and $N\in \ker(P)$, then the homotopy equivalence \eqref{eq:zero he} is homotopic to zero, since it sends $f\mapsto f\star (\eta\circ \e)$; in particular $\Hom_\MS(M,N)$ is contractible.  This shows that $\im(P)\subset {}^\perp \ker(P)$ and $\ker(P)\subset \im(P)^\perp$.

For the opposite containment(s), suppose that $M\in {}^\perp\ker(P)$.  Then since $M\star P^c\in \ker(P)$ we have $\Hom_\MS(M,M\star P^c)\simeq 0$, in other words, we have a homotopy equivalence
\[
\Hom_\MS(M,M\star P)\buildrel\simeq\over\longrightarrow \End_\MS(M) \ , \qquad f\mapsto (\e\star \id_M)\circ f
\]
In particular, there exists a map $f\colon M\to M\star P$ such that $\e\star \id_M)\circ f \simeq \id_M$.    This means that $M$ is a homotopy retract of $M\star P$, hence $M\in \im(P)$ by Lemma \ref{lemma:im P}.  This finishes the proof that ${}^\perp\ker(P)= \im(P)$.  A similar argument finishes the proof that $\im(P)^\perp=\ker(P)$.
\end{proof}

The following is a dg analogue of the semi-orthogonality result from \cite{HogancampIdempotent}.  It is reminiscient of the well known fact that when computing $\Ext_R(M,N)$ between two $R$-modules $M,N$, one may resolve both $M,N$ by projective $R$-modules, or one may resolve only $M$.

\begin{proposition}\label{prop:semiortho P and A}
Let $\MS$ be a dg category on which $\AS$ acts on the right.  Let $\e\colon P\to \one$ be a counital idempotent in $\AS$.  If $M,N\in \MS$ are objects with $M\star P\simeq M$, then post-composing with $\id_N\star \e$ gives a homotopy equivalence
\begin{equation}\label{eq:postcompose}
\Hom_{\MS}(M, N\star P) \to \Hom_{\MS}(M, N).
\end{equation}
\end{proposition}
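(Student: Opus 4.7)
The plan is to deduce this from the semi-orthogonality Lemma \ref{lemma:semiortho P and Pc} by expressing the postcomposition map as a step in a cone sequence. Starting from the complementary idempotent $P^c \simeq \Cone(P \to \one)$ (Proposition \ref{prop:complements}), the identity $\one \simeq (P \to P^c)$ in the appropriate pretriangulated envelope gives, after acting on $N$ from the right, a mapping cone presentation
\[
N \simeq \bigl(N\star P \xrightarrow{\ \id_N\star \e\ } N \xrightarrow{\ \id_N\star \eta\ } N\star P^c\bigr),
\]
where $\eta\colon \one \to P^c$ is the unit. Applying the dg functor $\Hom_\MS(M,-)$ to this triangle then yields a cone description
\[
\Cone\bigl(\Hom_\MS(M, N\star P) \xrightarrow{\ (\id_N\star \e)_*\ } \Hom_\MS(M, N)\bigr) \simeq \Hom_\MS(M, N\star P^c),
\]
up to the appropriate shift.

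The next step is to show the target of this cone computation is contractible. By hypothesis $M\in \im(P)$, and since $P\star P^c \simeq 0$ we have $(N\star P^c)\star P \simeq N\star P^c\star P \simeq 0$, so $N\star P^c \in \ker(P)$. Lemma \ref{lemma:semiortho P and Pc} then gives $\Hom_\MS(M, N\star P^c) \simeq 0$, so the cone of $(\id_N\star \e)_*$ is acyclic, which means this map is a quasi-isomorphism. Since the relevant hom complexes are assumed projective over $\k$ (or more simply, since we are working in the homotopy category of $\k$-modules where quasi-isomorphism of $\k$-module maps between homotopy-projective complexes implies homotopy equivalence), this quasi-isomorphism is a homotopy equivalence.

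The main obstacle to watch out for is the justification that $\Hom_\MS(M,-)$ sends the mapping cone triangle in $\MS$ to a mapping cone triangle of chain complexes. This is a standard fact once one observes that $\Hom_\MS(M,-)$ is a dg functor, hence commutes with twisted complexes; concretely, for a cone $Z = \Cone(X \to Y)$ in $\MS$, the hom complex $\Hom_\MS(M, Z)$ is literally the mapping cone of $\Hom_\MS(M, X) \to \Hom_\MS(M, Y)$ by the formulas in \S\ref{ss:twisted complexes}. With this identification, the argument is formal. An alternative, equivalent route would be to construct an explicit homotopy inverse to $(\id_N\star \e)_*$ by choosing a homotopy equivalence $M\star P \to M$ and using it to lift any $f\colon M\to N$ to a map $M \simeq M\star P \to N\star P$; this constructive version makes the use of $M\in \im(P)$ very transparent but would involve more bookkeeping than the cone argument above.
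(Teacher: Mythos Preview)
Your proof is correct and follows essentially the same route as the paper: identify the cone of the postcomposition map with $\Hom_\MS(M, N\star P^c)$ and kill it using Lemma \ref{lemma:semiortho P and Pc}. One small remark: once that lemma gives $\Hom_\MS(M, N\star P^c)\simeq 0$ (contractible, not merely acyclic), the cone of $(\id_N\star\e)_*$ is contractible and hence the map is a homotopy equivalence directly---your detour through quasi-isomorphism and projectivity over $\k$ is unnecessary.
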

\begin{proof}
Linearity of the hom functor implies that the cone of the chain map \eqref{eq:postcompose} is isomorphic to $\Hom_\MS(M\star P,N\star P^c)$, which is contractible by Lemma \ref{lemma:semiortho P and Pc}.  Thus \eqref{eq:postcompose} is a homotopy equivalence.
\end{proof}

\begin{corollary}\label{cor:canonicalness}
If $P_1\leq P_2$ are counital idempotents in $\AS$, then the (closed, degree zero) map $\theta\colon P_1\to P_2$ with $\e_2\circ \theta\simeq \e_1$ is unique up to homotopy.
\end{corollary}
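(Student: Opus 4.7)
The plan is to apply Proposition \ref{prop:semiortho P and A} to the right $\AS$-module category $\MS = \AS$ (with its standard right $\AS$-action on itself), making the choices $M = P_1$, $N = \one$, and $P = P_2$. First I would verify the sole hypothesis of that proposition, namely that $M \star P \simeq M$, i.e.\ $P_1 \star P_2 \simeq P_1$; this is immediate from $P_1 \leq P_2$ by Theorem \ref{thm:counital idempotent order}.

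The proposition then delivers that post-composition with $\id_\one \star \e_2 = \e_2$ is a homotopy equivalence of hom complexes
\[
\Hom_{\AS}(P_1, P_2) \xrightarrow{\ \e_2 \circ (-)\ } \Hom_{\AS}(P_1, \one).
\]
Passing to $H^0$ on both sides converts this into a bijection on homotopy classes of closed degree zero morphisms. Consequently the preimage of $[\e_1] \in H^0\Hom_{\AS}(P_1,\one)$ is a single homotopy class in $H^0\Hom_{\AS}(P_1,P_2)$, which is exactly the assertion that any two closed degree zero maps $\theta,\theta'\colon P_1 \to P_2$ with $\e_2 \circ \theta \simeq \e_1 \simeq \e_2 \circ \theta'$ are homotopic.

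There is no substantive obstacle: once one has Proposition \ref{prop:semiortho P and A} in hand (which rests on the semiorthogonality $\im(P_2)^\perp = \ker(P_2)$ of Lemma \ref{lemma:semiortho P and Pc}), the corollary is a one-line deduction. The only subtlety worth highlighting is that the corollary only demands $\e_2 \circ \theta \simeq \e_1$ up to homotopy, as opposed to the strict equality $\e_2 \circ \theta = \e_1$ furnished by the construction of $\theta$ in Theorem \ref{thm:counit order}(3); this is precisely the reason one needs the full homotopy equivalence of hom complexes, rather than the mere existence of \emph{some} $\theta$.
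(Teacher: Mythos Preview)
Your proof is correct and follows exactly the same route as the paper's own proof: apply Proposition~\ref{prop:semiortho P and A} with $\MS=\AS$, $M=P_1$, $N=\one$, $P=P_2$, obtain the homotopy equivalence $\Hom_\AS(P_1,P_2)\to\Hom_\AS(P_1,\one)$ given by post-composition with $\e_2$, and read off uniqueness of the preimage of $\e_1$. You are slightly more explicit than the paper in verifying the hypothesis $P_1\star P_2\simeq P_1$ via Theorem~\ref{thm:counital idempotent order} (note that statement (2) there contains a typo---the proof in the paper clearly establishes $P_1\star P_2\simeq P_1$, not $P_2$).
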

\begin{proof}
Proposition \ref{prop:semiortho P and A} (with $\MS=\AS$, $M=P_1$, $N=\one$) implies that post-composition with $\e_2$ defines a homotopy equivalence
\[
\Hom_\AS(P_1,P_2)\to \Hom_\AS(P_1,\one)
\] 
In particular $\e_1$ has a unique (up to homotopy) preimage $\theta$.
\end{proof}
\subsection{Constructing counital idempotents}
\label{ss:constructing}
We now construct infinite one-sided twisted complexes $\PB_C,\AB_C\in \pretr^{\amalg}(\AS)$ as follows.  We will shift $C$ and consider the shifted counit
\begin{equation}\label{eq:shifted counit}
\tilde{\e} := \e^0_{-\iota}:=\e\circ \phi_{C,-\iota}\inv \colon q^{-\iota} C\to \one.
\end{equation}
Now, set
\begin{equation}\label{eq:Ac}
\AB_C := \tw_\d\Big(\coprod_{n\geq 0} (q^{-\iota}C)^{\star n}\Big) \ ,
\end{equation}
where the twist $\d$ is $\d=\sum_{i,j\geq 0}\id^{\star i}\star \tilde{\e}\star \id^{\star j}$ (with no signs). Similarly let
\begin{equation}\label{eq:Pc}
\PB_C:= q^{\iota} \tw_\d\Big(\coprod_{n\geq 0} (q^{-\iota}C)^{\star n+1}\Big).
\end{equation}

\begin{remark}
The reader may be expecting an alternating sign in the definition of $\d$ above; this sign will appear once we unpack \eqref{eq:tensor sign} describing how $\star$ behaves on morphisms in $\SB(\AS)$.  Indeed, check that
\[
\e^0_{-\iota}\star \id^{-\iota}_{-\iota} = (\id\star \e)^{-\iota}_{-2\iota}
\]
(obtained by letting $k=l=i'=0$ and $i=j=j'=-\iota$ in \eqref{eq:tensor sign}), and
\[
\id^{-\iota}_{-\iota}\star \e^0_{-\iota} = -(\id\star \e)^{-\iota}_{-2\iota}
\]
(obtained by letting $k=l=j'=0$ and $i=i'=j=-\iota$ in \eqref{eq:tensor sign}).  More generally,
\[
(\id^{-\iota}_{-\iota})^{\star m}\star \e^0_{-\iota}\star (\id^{-\iota}_{-\iota})^{\star n} = (-1)^m (\id^{\star m}\star \e\star \id^{\star n})^{-(n+m)\iota}_{-(n+m+1)\iota}
\]
\end{remark}

\begin{remark}
The inclusion of the $n=0$ term defines a closed degree zero morphism $\one\to \AB_C$, and $\PB_C$ can be recovered as the cocone of this map:
\[
\PB_C \simeq \left(\one\to q^{\iota} \AB_C\right).
\]
\end{remark}

\begin{lemma}\label{lemma:A kills C}
We have $C\star \AB_C\simeq 0 \simeq \AB_C\star C$.
\end{lemma}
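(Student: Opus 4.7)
The plan is to exhibit an explicit contracting homotopy for $C \star \AB_C$; the statement $\AB_C \star C \simeq 0$ follows by the mirror-image argument using the retraction of $\e \star \id_C$ in place of $\id_C \star \e$. The essential input is that $C$ carries a counital structure in the sense of the counital-object definition applied to $H^0(\AS)$, so that $\id_C \star \e : C \star C \to C$ is a retract in $H^0(\AS)$. Accordingly, I would begin by choosing a closed degree-zero section $s : C \to C \star C$ in $\AS$ together with a homotopy $h_0 \in \Hom^{-\iota}_\AS(C,C)$ such that $(\id_C \star \e) \circ s - \id_C = d_\AS(h_0)$.

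Next I would unpack $C \star \AB_C$ as an object of $\pretr^{\amalg}(\AS)$: its underlying graded object is $\coprod_{n \geq 0} C \star (q^{-\iota}C)^{\star n}$, and its twist $\id_C \star \d$ is the signed sum of insertions of the shifted counit $\tilde{\e}$ at each of the $n$ internal $q^{-\iota}C$ positions of each summand, with signs $(-1)^{p-1}$ dictated by the Koszul rule of the suspended-envelope tensor product (as explicitly recorded in the parenthetical immediately after the definition of $\AB_C$). I would then define a degree $-\iota$ endomorphism $H$ of $C \star \AB_C$ whose component $H_n$ from the $n$-th to the $(n+1)$-st summand is the suspended-envelope lift of the degree-zero $\AS$-morphism $s \star \id^{\star n} : C^{\star n+1} \to C^{\star n+2}$.

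Then I would compute $d(H) = \partial H + \d \circ H + H \circ \d$, using the twist-differential sign rule together with $\ip{\iota,\iota} = 1$. On the $n$-th summand, $\partial H_n$ vanishes since $s$ is closed, while $\d \circ H_n$ has $n+1$ contributions from the $n+1$ $\tilde{\e}$-insertion positions of $C \star (q^{-\iota}C)^{\star n+1}$. The contribution at the freshly-inserted position evaluates to $\id + d_\AS(\tilde h_0) \star \id^{\star n}$ via the identity $(\id_C \star \tilde{\e}) \circ \tilde s = \id + d_\AS(\tilde h_0)$; the remaining $n$ contributions (at original positions), after sliding $\tilde{\e}$ past $\tilde s$, cancel term-by-term against $H_{n-1} \circ \d$ because of a unit offset in the Koszul sign between $\d|_p \circ H_n$ and $H_{n-1} \circ \d|_{p-1}$. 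This yields $d(H) = \id + E$ with $E$ supported entirely in the $d_\AS(\tilde h_0)$-term. I would then absorb $E$ by iteratively correcting: set $H' := H + H^{(1)} + H^{(2)} + \ldots$, where each $H^{(k)}$ strictly raises the filtration degree and is determined recursively by the homological perturbation lemma. The series converges in $\pretr^{\amalg}(\AS)$ because the column-finite condition holds automatically for such filtration-raising corrections, and its limit satisfies $d(H') = \id$, showing $C \star \AB_C \simeq 0$.

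The principal obstacle is the sign-tracking in the cancellation step, in particular verifying the precise unit offset between the Koszul sign of $\d|_p \circ H_n$ and that of $H_{n-1} \circ \d|_{p-1}$. These signs accumulate from the suspended-envelope tensor product of morphisms, from the Leibniz rule, from the twist-differential convention on $\Tw$, and from the signed sum defining $\d$ itself; balancing them requires attention. Once these signs are verified, the perturbative correction of the $h_0$-error term is routine, and the mirror argument using a section of $\e \star \id_C$ yields $\AB_C \star C \simeq 0$.
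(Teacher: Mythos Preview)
Your overall strategy matches the paper's: construct an explicit contracting homotopy for $C \star \AB_C$ out of the section $s$ of $\id_C \star \e$ and the homotopy $h_0$ witnessing that $s$ is a section. The paper denotes the shifted versions $\tilde{\Delta}_R$ and $h$, and its contracting homotopy is built from \emph{both} families at once:
\[
H \;=\; \sum_n h \star \id^{\star n} \;+\; \sum_n \tilde{\Delta}_R \star \id^{\star n}.
\]
The verification that $d_{\Tw}(H) = \id$ is then a single computation, with the telescoping cancellations (your ``sliding $\tilde{\e}$ past $\tilde{s}$'' step) and the $d(h)$-contribution handled simultaneously.

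Your version writes down only the section part and then proposes to repair the error $E$ by a perturbative series. Here there is a gap: you assert each correction $H^{(k)}$ ``strictly raises the filtration degree'', but the error $E = d_{\AS}(h_0) \star \id^{\star n}$ is \emph{diagonal} (filtration-preserving), so a strictly filtration-raising $H^{(1)}$ cannot cancel it. The correct $H^{(1)}$ is the diagonal term $-h_0 \star \id^{\star n}$: its $\partial$-part is exactly $-E$, and its super-commutator with $\d$ vanishes because $h_0$ lives in tensor position $1$ while every summand of $\id_C\star\d$ acts on positions $\geq 2$ (so the interchange law gives $\d\circ(h_0\star\id)+(h_0\star\id)\circ\d=0$). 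Hence the correction terminates after a single step, no infinite series or convergence argument is needed, and the corrected homotopy $H + H^{(1)}$ is precisely the paper's $H$.
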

\begin{proof}
Recall that $q^{-\iota}C:=q^{-\iota}C$.  We will show that $q^{-\iota}C\star \AB_C\simeq 0$.  The proof that $\AB_C\star q^{-\iota}C\simeq 0$ is similar.  Let $\tilde{\Delta}_R\in \Hom^{-\iota}(q^{-\iota}C,q^{-\iota}C^{\star 2})$ and $h\in \End^{-\iota}(q^{-\iota}C)$ such that
\[
d(h) = \id_{q^{-\iota}C} - (\id_{q^{-\iota}C}\star \tilde{\e}) \circ \tilde{\Delta}_R
\]
which exist since $C$ is counital.  Then define $H\in \End^{-\iota}(q^{-\iota}C\star \AB_C)$ componentwise to 
be the sum of terms of the form $h\star \id^{\star n}\colon q^{-\iota}C^{\star n+1}\to q^{-\iota}C^{\star n+1}$ 
and $\tilde{\Delta}_R\star \id^{\star n}\colon q^{-\iota}C^{\star n+1}\to q^{-\iota}C^{\star n+2}$.  Let $\d$ be 
the as in \eqref{eq:Ac}.  Consider now the expression
\begin{equation}\label{eq:dH for CA}
d(H) + \d\circ H + H\circ \d,
\end{equation}
where $d$ indicates the differential on the (untwisted) coproduct $\coprod_{n\geq 0} q^{-\iota}C^{\star n+1}$ and $\d$ is the twist on this coproduct which defines $q^{-\iota}C\star \AB_C$.  That is to say, $\d$ is the sum of terms of the form $\id^{\star m+1}\star \tilde{\e}\star \id^{\star n}$ with $m,n\geq 0$.

Observe that all summands of $\d\circ H$ of the form $(\id^{\star m+2}\star \tilde{\e}\star \id^{\star n})\circ (\tilde{\Delta}_R\star \id^{\star m+n})$ (with $m,n\geq 0$) will cancel with the summands of $H\circ \d$ of the form $(\tilde{\Delta}_R\star \id^{\star m+n})\circ (\id^{\star m+1}\star \tilde{\e}\star \id^{\star n})$, via the super interchange law $(f\star \id)\circ (\id\star g) = (-1)^{\ip{|f|,|g|}}(\id\star g)\circ (f\star \id)$.  Thus $\d\circ H+H\circ \d$ is a sum of terms of the form $(\id^{\star 1}\star \tilde{\e}\star \id^{\star n})\circ (\tilde{\Delta}_R\star \id^{\star m+n})$.

Finally, $d(H)$ is $\id$ minus a sum of terms of the form $\id\star \tilde{\e}\star \id^{\star n}$.  This shows that \eqref{eq:dH for CA} equals $\id_{q^{-\iota}C\star \AB_C}$, hence $q^{-\iota}C\star \AB_C\simeq 0$.
\end{proof}

\begin{corollary}\label{cor:PC is idempt}
For any counital object $C\in \AS$, we have that $\PB_C$ is a counital idempotent with complement $\AB_C$.
\end{corollary}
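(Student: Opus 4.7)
My plan is to invoke Proposition \ref{prop:complements}(3): it suffices to exhibit an extension $\one \simeq (\PB_C \to \AB_C)$ together with the annihilation relations $\PB_C \star \AB_C \simeq 0 \simeq \AB_C \star \PB_C$. The essential computational input is Lemma \ref{lemma:A kills C}, which provides contractions of $C \star \AB_C$ and $\AB_C \star C$.

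For the extension, note that the inclusion $\eta \colon \one \to \AB_C$ of the $n = 0$ summand is a closed morphism of degree zero. The remark preceding Lemma \ref{lemma:A kills C} identifies $\PB_C \simeq (\one \to q^\iota \AB_C) = \mathrm{Cocone}(\eta)$, and the standard distinguished triangle associated to a cocone yields $\PB_C \xrightarrow{\e} \one \xrightarrow{\eta} \AB_C$ in $H^0(\AS)$, equivalently the extension $\one \simeq (\PB_C \to \AB_C)$ required for (i). This also defines the candidate counit $\e$.

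For the annihilation, every summand $(q^{-\iota}C)^{\star n + 1}$ appearing in the definition of $\PB_C$ can be written as $(q^{-\iota}C)^{\star n} \star q^{-\iota}C$; distributing $\star\, \AB_C$ over the column-finite one-sided twisted complex (via the natural extension of Remark \ref{rmk:pretr and tensor} to $\pretr^{\amalg}$) yields
\[
\PB_C \star \AB_C \;\cong\; q^\iota \tw_{\d \star \id}\Bigl(\coprod_{n \geq 0} (q^{-\iota}C)^{\star n} \star \bigl(q^{-\iota}C \star \AB_C\bigr)\Bigr),
\]
and each summand is contractible by Lemma \ref{lemma:A kills C}. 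Symmetrically, extracting the factor of $C$ on the left gives $\AB_C \star \PB_C \simeq 0$ via $\AB_C \star q^{-\iota}C \simeq 0$. Proposition \ref{prop:complements}(3) then produces the conclusion.

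The least routine point is the passage from ``all summands contractible'' to ``total one-sided twisted complex contractible'' in the infinite setting. This is a standard fact in the theory of dg categories: one chooses contracting homotopies $h_n$ componentwise, sets $H := \sum_n h_n$, and observes that $d_{\mathrm{tw}}(H) - \id = [\d, H]$ strictly lowers the poset index. Column-finiteness of $\pretr^{\amalg}$ guarantees that the perturbation-lemma-style series $H \circ \sum_{k \geq 0}(-[\d, H])^k$ converges component-wise to a global contracting homotopy, finishing the argument.
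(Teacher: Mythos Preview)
Your proof is correct and is precisely the argument the paper leaves implicit: the corollary is stated immediately after Lemma~\ref{lemma:A kills C} with no proof, and the intended deduction is exactly to combine that lemma with the extension $\one\simeq(\PB_C\to\AB_C)$ (recorded in the remark just before the lemma) and then invoke Proposition~\ref{prop:complements}. Your final paragraph correctly handles the one point that requires care in the infinite setting, namely that a one-sided twisted complex in $\pretr^{\amalg}$ with contractible summands is contractible; the finiteness of up-sets in the indexing poset is what makes the geometric series in $[\d,H]$ terminate column-by-column.
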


\subsection{The bar complex as a counital idempotent}
\label{ss:bar as idempt}
The main property that we will use is that $\Bar(\CS)$ is a counital idempotent in the dg monoidal category $\Bim_{\CS,\CS}$. In fact, $\Bar(\CS)$ is a special case of the construction $\PB_C$ from \S \ref{ss:constructing}.   To see this, we first observe that since $\Bim_{\CS,\CS}$ is closed under coproducts, shifts, and twists, we have a canonical functor
\begin{equation}\label{eq:tot}
\Tot\colon \pretr^{\amalg}(\Bim_{\CS,\CS})\rightarrow \Bim_{\CS,\CS}
\end{equation}
which realizes that $\Bim_{\CS,\CS}$ is closed under taking one-sided twisted complexes.  It is a standard fact that tensor product $\otimes_\CS$ commutes with coproducts, so $\Tot$ is monoidal.  To apply the constructions from \S \ref{ss:constructing} we need a supply of counital objects in $\Bim_{\CS,\CS}$.

\begin{lemma}\label{lemma:CDC is coalg}
For any subcategory $\DS\subset \CS$, the bimodule $\CS\otimes_\DS \CS$ is a dg coalgebra in $\Bim_{\CS,\CS}$.  If $\DS\subset \ES\subset \CS$, then $\CS\otimes_\DS \CS\leq  \CS\otimes_\ES \CS$ (recall Definition \ref{def:counit order}.
\end{lemma}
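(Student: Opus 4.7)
The plan is to construct the coalgebra structure on $\CS\otimes_\DS\CS$ explicitly and then deduce the ordering claim from Theorem \ref{thm:counit order}(3). The counit $\e_\DS\colon \CS\otimes_\DS\CS \to \one_\CS = \CS$ is composition in $\CS$, sending $f\otimes_\DS g\mapsto f\circ g$ on a simple tensor with $f\in \braCket{X}{\CS}{D}$, $g\in \braCket{D}{\CS}{Z}$, $D\in \Obj(\DS)$. The comultiplication $\Delta_\DS\colon \CS\otimes_\DS\CS \to (\CS\otimes_\DS\CS)\otimes_\CS(\CS\otimes_\DS\CS)$ inserts an identity:
\[
\Delta_\DS(f\otimes_\DS g) := (f\otimes_\DS \id_D)\otimes_\CS (\id_D\otimes_\DS g).
\]
Both factors are legal elements of $\CS\otimes_\DS\CS$ because $\id_D\in \DS$.

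Well-definedness and the coalgebra axioms are a matter of routine bookkeeping. For $\e_\DS$ well-definedness on $\otimes_\DS$-classes is immediate from associativity of composition; for $\Delta_\DS$, given $h\in \Hom_\DS(D',D)$, the equality $\Delta_\DS(fh\otimes_\DS g) = \Delta_\DS(f\otimes_\DS hg)$ is obtained by sliding $h$ across the middle $\otimes_\CS$ via the $\CS$-balancing relation (legal since $\DS\subset \CS$), then re-absorbing it using $\otimes_\DS$-balancing on the opposite side. Counitality of $\Delta_\DS$, i.e.~$(\id\star \e_\DS)\circ \Delta_\DS = \id = (\e_\DS\star \id)\circ \Delta_\DS$, reduces to the identity $f\otimes_\DS (\id_D\circ g) = f\otimes_\DS g$ and its symmetric counterpart. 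Coassociativity is similarly straightforward under the canonical identification $(\CS\otimes_\DS\CS)^{\otimes_\CS 3}\cong \CS\otimes_\DS\CS\otimes_\DS\CS\otimes_\DS\CS$. Crucially, the existence of such a $\Delta_\DS$ satisfying counitality exhibits explicit right inverses for $\e_\DS\star \id$ and $\id\star \e_\DS$, so the pair $(\CS\otimes_\DS\CS,\e_\DS)$ is in particular a counital object in the sense of \S\ref{ss:counital objects}.

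For the ordering, given $\DS\subset \ES\subset \CS$, every $\otimes_\DS$-balancing relation is \emph{a fortiori} a $\otimes_\ES$-balancing relation, so there is a well-defined quotient bimodule map $\theta\colon \CS\otimes_\DS\CS \to \CS\otimes_\ES\CS$ sending $f\otimes_\DS g\mapsto f\otimes_\ES g$. Since both $\e_\DS$ and $\e_\ES$ compute composition, $\e_\ES\circ \theta = \e_\DS$, and condition (3) of Theorem \ref{thm:counit order} gives $\CS\otimes_\DS\CS\leq \CS\otimes_\ES\CS$.

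The only genuinely delicate point is the well-definedness of $\Delta_\DS$ together with the implicit identification $(\CS\otimes_\DS\CS)\otimes_\CS(\CS\otimes_\DS\CS)\cong \CS\otimes_\DS\CS\otimes_\DS\CS$ that makes the formula look sensible; beyond this, every remaining verification is a transparent manipulation of simple tensors, and the ordering statement is then an immediate application of the already-established Theorem \ref{thm:counit order}.
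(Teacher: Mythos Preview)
Your proof is correct and follows essentially the same approach as the paper's: both define the counit by composition and the comultiplication by inserting an identity, and both deduce the ordering from the factorization $\CS\otimes_\DS\CS\to\CS\otimes_\ES\CS\to\CS$, which is precisely condition (3) of Theorem \ref{thm:counit order}. Your treatment is slightly more explicit about well-definedness and about why the coalgebra structure makes $(\CS\otimes_\DS\CS,\e_\DS)$ a counital object, but the substance is the same.
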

\begin{proof}
A typical element in $\CS\otimes_\DS \CS$ is of the form $f\otimes f' \in \braket{X}{Y}\otimes \braket{Y}{X'}$ with $X,X'\in \Obj(\CS)$ and $Y\in \Obj(\DS)$.  Tensoring over $\DS$ means that the generators $f\otimes f'$ are to be regarded modulo relations of the form $(f\circ g)\otimes f' \sim f\otimes (g\circ f')$ for all morphisms $f\in \bra{X}\CS\ket{Y}$, $g\in \bra{Y}\DS\ket{Y'}$, $f'\in \bra{Y'}\CS\ket{X'}$.

The counit and comultiplication of $\CS\otimes_\DS \CS$ are defined by
\[
\begin{tikzpicture}
\node (a) at (0,0) {$\braket{X}{Y}\otimes \braket{Y}{X'}$};
\node (b) at (3,0) {$\braket{X}{X'}$};
\node (c) at (0,-1) {$f\otimes f'$};
\node (d) at (3,-1) {$f\circ f'$};
\path[-stealth]
(a) edge node[above] {$\e$} (b);
\path[|-stealth]
(c) edge node {} (d);
\end{tikzpicture}
\qquad \qquad
\begin{tikzpicture}
\node (a) at (0,0) {$\braket{X}{Y}\otimes \braket{Y}{X'}$};
\node (b) at (4.5,0) {$\braket{X}{Y}\otimes \braket{Y}{Y}\otimes \braket{Y}{X'}$};
\node (c) at (0,-1) {$f\otimes f'$};
\node (d) at (4.5,-1) {$f\otimes\id_Y\otimes f'$};
\path[-stealth]
(a) edge node[above] {$\Delta$} (b);
\path[|-stealth]
(c) edge node {} (d);
\end{tikzpicture}
\]
The coassociativity and counit axioms are easily verified.   This proves the first statement.

For the second, assume that $\DS$ is a subcategory of $\ES$, which is a subcategory of $\CS$.  Then clearly the quotient $\CS\otimes_\DS \CS\to \CS$ factors as a composition of quotient maps $\CS\otimes_\DS \CS\to \CS\otimes_\ES \CS\to \CS$, which shows $\CS\otimes_\DS \CS\leq  \CS\otimes_\ES \CS$ as claimed.
\end{proof}

There are a few important special cases of $\CS\otimes_\DS \CS$ that are worth pointing out.
\begin{example}\label{ex:YY coalg}
If $\DS$ has a single object $\Obj(\DS)=\{Y\}$, with $\Hom_\DS(Y,Y)=\k\, \id_Y$, then
\[
\CS\otimes_\DS\CS = \ketbra{Y}{Y}.
\]
\end{example}
\begin{example}\label{ex:YY coalg 2}
Generalizing Example \ref{ex:YY coalg}, if all morphisms in $\DS$ are scalar multiples of identity maps, then
\[
\CS\otimes_\DS\CS = \bigoplus_{Y\in \Obj(\DS)}\ketbra{Y}{Y}.
\]
\end{example}

We now have the relation to bar complexes.

\begin{lemma}\label{lemma:bar as idempt}
Let $\XS$ be a subset of objects of $\CS$, which we may view as a subcategory of $\CS$ in which all morphisms are scalar multiples of the identity maps. Then
\[
\Bar(\CS,\XS) = \PB_{\CS\otimes_{\XS}\CS}.
\]
\end{lemma}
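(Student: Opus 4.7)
The plan is to unfold both sides of the asserted equality and identify them as the same twisted object in $\Bim_{\CS,\CS}$.

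First, by Example \ref{ex:YY coalg 2}, since $\XS$ is a subcategory of $\CS$ in which all morphisms are scalar multiples of identities, the coalgebra $C := \CS \otimes_\XS \CS$ equals $\bigoplus_{Y \in \XS} \ket{Y} \otimes \bra{Y}$. Iterating the tensor product over $\CS$ and using $\bra{Y_i} \otimes_\CS \ket{Y_{i+1}} \cong \braket{Y_i}{Y_{i+1}}$ yields
\[
C^{\star(n+1)} = \bigoplus_{Y_0, \ldots, Y_n \in \XS} \ket{Y_0} \otimes \braket{Y_0}{Y_1} \otimes \cdots \otimes \braket{Y_{n-1}}{Y_n} \otimes \bra{Y_n}.
\]
Plugging this into the definition \eqref{eq:Pc} and combining the outer shift $q^\iota$ with the inner shift $q^{-(n+1)\iota}$ to give $q^{-n\iota}$, the underlying graded coproduct of $\PB_C$ coincides (after relabeling $n \leftrightarrow r$) with the underlying graded object of $\Bar(\CS,\XS)$ from Definition \ref{def:relative bar cx}.

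Next, I would match the twists. The twist $\d$ on $\PB_C$ is the sum of terms $(\id^{-\iota}_{-\iota})^{\star m} \star \e^0_{-\iota} \star (\id^{-\iota}_{-\iota})^{\star k}$ ranging over $m,k \geq 0$. The remark following \eqref{eq:Pc} unpacks this as $(-1)^m (\id^{\star m} \star \e \star \id^{\star k})^{-(m+k)\iota}_{-(m+k+1)\iota}$. Since the counit of $C = \CS \otimes_\XS \CS$ sends $f \otimes f' \mapsto f \circ f'$ (as in the proof of Lemma \ref{lemma:CDC is coalg}), the effect on a simple tensor $[f_0,f_1,\ldots,f_{n+1}]$ is to compose $f_m$ with $f_{m+1}$, yielding exactly the bar differential term $(-1)^m [f_0,\ldots,f_m \circ f_{m+1},\ldots,f_{n+1}]$ with the correct sign. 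In addition, the internal differentials on the factors $\braket{Y_{i-1}}{Y_i}$ contribute via the Leibniz rule and the Koszul signs induced by the shifts $q^{-\iota}$ on each factor; these must be checked to reproduce the type-(1) terms $(-1)^{r+k_0+\cdots+k_{i-1}} [f_0,\ldots,d(f_i),\ldots,f_{r+1}]$ of Definition \ref{def:bar cx}.

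Finally, I would verify that the bimodule structure matches: the shift $q^{-r\iota}$ on the $r$-th summand induces the sign $(-1)^{r\ip{\iota,|g|}}$ in the left action, as prescribed in Definition \ref{def:bar cx}, while the right action has no sign twist, in agreement with the $\PB_C$ side. The counits also agree, since the counit $\PB_C \to \one$ from Corollary \ref{cor:PC is idempt} restricts on the $n=0$ summand to $\e\colon C \to \one$ (and vanishes on higher summands by definition of the twist), matching the counit of $\Bar(\CS,\XS)$. The main obstacle is the sign bookkeeping: tracking how the formulas \eqref{eq:shift of M}, \eqref{eq:tensor sign}, Definition \ref{def:S and tensor}, together with Koszul signs from internal differentials on tensor factors and the outer $q^\iota$ shift, combine to reproduce the bar complex sign conventions of Definition \ref{def:bar cx}. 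The explicit sign formula displayed in the remark after \eqref{eq:Pc} is the key ingredient that makes all of this work out.
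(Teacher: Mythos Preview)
Your proposal is correct and takes essentially the same approach as the paper, which simply states that the proof is ``an exercise in unpacking the definitions.'' You have carried out exactly that exercise: identifying $C^{\star(n+1)}$ with the $r=n$ layer of the relative bar complex via Example~\ref{ex:YY coalg 2} and Lemma~\ref{lemma:yoneda}, and matching the twist using the sign computation in the remark following \eqref{eq:Pc}.
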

\begin{proof}
An exercise in unpacking the definitions.
\end{proof}

Now, some basic theory of counital idempotents yields the following result on bar complexes.

\begin{proposition}\label{prop:bar props}
We have
\begin{enumerate}
\item If $\XS\subset \YS$ then $\Bar(\CS,\XS)\leq \Bar(\CS,\YS)$.
\item If $\XS\subset \YS$ and $\Xi\colon \Bar(\CS,\YS)\to \Bar(\CS,\XS)$ is a closed degree zero map of bimodules such that $\e_{\YS}\circ \Xi \simeq \e_{\XS}$ (where $\e_\XS$ and $\e_\YS$ are the appropriate counit maps), then $\Xi$ is a homotopy equivalence.
\end{enumerate}
\end{proposition}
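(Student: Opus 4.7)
The plan is to deduce both parts from the theory of counital idempotents developed in the appendix. By Lemma \ref{lemma:bar as idempt} together with Corollary \ref{cor:PC is idempt}, each relative bar complex $\Bar(\CS,\ZS)$ is a counital idempotent in the dg monoidal category $\Bim_{\CS,\CS}$, so the relation $\leq$ of Definition \ref{def:counit order} applies, and the semi-orthogonality results of \S \ref{ss:semiortho} are available.

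For part (1), I would exhibit the obvious inclusion $\iota\colon \Bar(\CS,\XS)\hookrightarrow \Bar(\CS,\YS)$: a simple tensor $[f_0,\ldots,f_{r+1}]$ whose intermediate objects lie in $\XS$ is, a fortiori, a legitimate simple tensor in $\Bar(\CS,\YS)$ because $\XS\subset \YS$. The definitions of the bimodule structure, bar differential, and counit on $\Bar(\CS,\XS)$ and $\Bar(\CS,\YS)$ (Definitions \ref{def:bar cx} and \ref{def:relative bar cx}) are given by identical formulas, so routine verification shows that $\iota$ is a closed degree zero bimodule map satisfying $\e_{\YS}\circ \iota = \e_{\XS}$. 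This is exactly condition (3) of Theorem \ref{thm:counit order}, so $\Bar(\CS,\XS)\leq \Bar(\CS,\YS)$.

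For part (2), with $\iota$ from part (1), I would show that $\iota$ is a two-sided homotopy inverse to $\Xi$, using Proposition \ref{prop:semiortho P and A} applied once for each of the two bar complexes. Taking the $\AS$-module category to be $\AS=\MS=\Bim_{\CS,\CS}$, $P:=\Bar(\CS,\YS)$, $M=P$ and $N=\one$, idempotence gives $M\star P=P\star P\simeq P=M$, so Proposition \ref{prop:semiortho P and A} yields a homotopy equivalence
\[
\End_{\Bim}(P)\xrightarrow{\,\simeq\,} \Hom_{\Bim}(P,\one), \qquad f\mapsto \e_{\YS}\circ f.
\]
Both $\id_P$ and $\iota\circ \Xi$ hit something homotopic to $\e_{\YS}$: the former trivially, the latter because $\e_{\YS}\circ \iota\circ \Xi = \e_{\XS}\circ \Xi \simeq \e_{\YS}$ by the hypothesis on $\Xi$ combined with part (1). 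Since the above map is an isomorphism on $\pi_0$, we conclude $\iota\circ \Xi\simeq \id_{\Bar(\CS,\YS)}$. Repeating the identical argument with $P:=\Bar(\CS,\XS)$ shows $\Xi\circ \iota\simeq \id_{\Bar(\CS,\XS)}$, so $\Xi$ is a homotopy equivalence.

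The whole proof is essentially a clean reduction to the abstract framework, so there is no real obstacle; the only point that requires any care is bookkeeping that $\iota$ genuinely commutes with the bar differentials and bimodule actions (i.e.\ that the signs in Definition \ref{def:bar cx} depend only on the morphisms $f_i$, not on whether their sources and targets are parametrized by $\XS$ or $\YS$). Once that is in place, the rest is a formal consequence of Theorem \ref{thm:counit order} and Proposition \ref{prop:semiortho P and A}.
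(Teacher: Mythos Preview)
Your proof is correct and follows essentially the same strategy as the paper's: reduce everything to the abstract theory of counital idempotents in the appendix. The packaging differs slightly. For (1), the paper invokes Lemma~\ref{lemma:CDC is coalg} to get $\CS\otimes_\XS\CS \leq \CS\otimes_\YS\CS$ on the level of the underlying coalgebras, whereas you exhibit the inclusion $\iota$ of bar complexes directly and appeal to criterion (3) of Theorem~\ref{thm:counit order}; your route is a bit more concrete. For (2), the paper first observes that the hypothesis on $\Xi$ gives $\Bar(\CS,\YS)\leq \Bar(\CS,\XS)$ (again by criterion (3)), combines this with (1) and Theorem~\ref{thm:counital idempotent order} to get a homotopy equivalence of objects, and then cites Corollary~\ref{cor:canonicalness} to identify $\Xi$ as \emph{the} equivalence. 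You instead unpack Corollary~\ref{cor:canonicalness} and apply Proposition~\ref{prop:semiortho P and A} twice to show directly that your $\iota$ is a two-sided homotopy inverse to $\Xi$. This is the same argument one level down in the citation chain, and arguably makes the mechanism more visible.
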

\begin{proof}
Statement (1) is immediate from the relation $\CS\otimes_\XS\CS\leq \CS\otimes_\YS\CS$ from Lemma \ref{lemma:CDC is coalg}.  Statement (2) is a consequence of Theorem \ref{thm:counital idempotent order}: if $\Bar(\CS,\XS)\leq \Bar(\CS,\YS)$ and $\Bar(\CS,\YS)\leq \Bar(\CS,\XS)$, then $\Bar(\CS,\XS)\simeq \Bar(\CS,\YS)$.  Corollary \ref{cor:canonicalness} implies that $\Xi$ is in fact a homotopy equivalence.
\end{proof}

\bibliographystyle{alpha}
\bibliography{bib}

\end{document}